\title{Genus $2$ paramodular Eisenstein congruences}
\author{Dan Fretwell}
\thanks{Dan Fretwell, Heilbronn Institute for Mathematical Research, School of Mathematics, University of Bristol, U.K. Email: \url{daniel.fretwell@bristol.ac.uk}}
\date{}
\theoremstyle{plain}
\newtheorem{thm}{Theorem}[section]
\newtheorem{lem}[thm]{Lemma}
\newtheorem{cor}[thm]{Corollary}
\newtheorem{prop}[thm]{Proposition}
\newtheorem{conj}[thm]{Conjecture}
\theoremstyle{definition}
\newtheorem{define}[thm]{Definition}
\newtheorem{example}[thm]{Example}
\begin{document}
\maketitle

\begin{center}
\textbf{Abstract}
\end{center}

We investigate certain Eisenstein congruences, as predicted by Harder, for level $p$ paramodular forms of genus $2$. We use algebraic modular forms to generate new evidence for the conjecture. In doing this we see explicit computational algorithms that generate Hecke eigenvalues for such forms.

\section{Introduction}

Congruences between modular forms have been found and studied for many years. Perhaps the first interesting example is found in the work of Ramanujan. He studied in great detail the Fourier coefficients $\tau(n)$ of the discriminant function $\Delta(z) = q\prod_{n=1}^{\infty}(1-q^n)^{24}$ (where $q = e^{2\pi i z}$). The significance of $\Delta$ is that it is the unique normalized cusp form of weight $12$. 

Amongst Ramanujan's mysterious observations was a pretty congruence: \[\tau(n) \equiv \sigma_{11}(n) \bmod 691.\] Here $\sigma_{11}(n) = \sum_{d\mid n}d^{11}$ is a power divisor sum. Naturally one wishes to explain the appearance of the modulus $691$. The true incarnation of this is via the fact that the prime $691$ divides the numerator of the ``rational part" of $\zeta(12)$, i.e $\frac{\zeta(12)}{\pi^{12}}\in\mathbb{Q}$ (a quantity that appears in the Fourier coefficients of the Eisenstein series $E_{12}$).

Since the work of Ramanujan there have been many generalizations of his congruences. Indeed by looking for big enough primes dividing numerators of normalized zeta values one can provide similar congruences at level $1$ between cusp forms and Eisenstein series for other weights. In fact one can even give ``local origin" congruences between level $p$ cusp forms and level $1$ Eisenstein series by extending the divisibility criterion to include single Euler factors of $\zeta(s)$ rather than the global values of $\zeta(s)$ (see \cite{danneil} for results and examples).

There are also Eisenstein congruences predicted for Hecke eigenvalues of genus $2$ Siegel cusp forms. One particular type was conjectured to exist by Harder \cite{harder1}. There is only a small amount of evidence for this conjecture, the literature only contains examples at levels $1$ and $2$ (using methods specific to these levels). The conjecture is also far from being proved. Only one specific level $1$ example of the congruence has been proved (p.$386$ of \cite{chenevier2}).

In this paper we will see new evidence for a level $p$ version of Harder's conjecture for various small primes (including $p=2$ but not exclusively). The Siegel forms will be of paramodular type and the elliptic forms will be of $\Gamma_0(p)$ type. In doing this we will make use of Jacquet-Langlands style conjectures due to Ibukiyama.

\section{Harder's conjecture}

Given $k\geq 0$ and $N\geq 1$ let $S_{k}(\Gamma_0(N))$ denote the space of elliptic cusp forms for $\Gamma_0(N)$. Also for $j\geq 0$ let $S_{j,k}(K(N))$ denote the space of genus $2$, vector-valued Siegel cusp forms for the paramodular group of level $N$, taking values in the representation space $\text{Symm}^j(\mathbb{C}^2) \otimes \text{det}^k$ of $\text{GL}_2(\mathbb{C})$.

Given $f\in S_k(\Gamma_0(N))$ we let $\Lambda_{\text{alg}}(f,j+k) = \frac{\Lambda(f,j+k)}{\Omega}$, where $\Lambda(f,s)$ is the completed L-function attached to $f$ and $\Omega$ is a Deligne period attached to $f$. The choice of $\Omega$ is unique up to scaling by $\mathbb{Q}_f^{\times}$ but Harder shows how to construct a more canonical choice of $\Omega$ that is determined up to scaling by $\mathcal{O}_{\mathbb{Q}_f}^{\times}$ \cite{harder3}.

In this paper we consider the following paramodular version of Harder's conjecture (when $N=1$ this is the original conjecture found in \cite{harder1}).

\begin{conj}\label{V}

Let $j>0$, $k\geq 3$ and let $f\in S_{j+2k-2}^{\text{new}}(\Gamma_0(N))$ be a normalized Hecke eigenform with eigenvalues $a_n$. Suppose that $\text{ord}_\lambda(\Lambda_{alg}(f,j+k)) > 0$ for some prime $\lambda$ of $\mathbb{Q}_f$ lying above a rational prime $l > j+2k-2$ (with $l\nmid N$). 

Then there exists a Hecke eigenform $F\in S_{j,k}^{\text{new}}(K(N))$ with eigenvalues $b_n\in\mathbb{Q}_F$ such that \[b_q \equiv q^{k-2} + a_q + q^{j+k-1} \bmod \Lambda\] for all primes $q\nmid N$ (where $\Lambda$ is some prime lying above $\lambda$ in the compositum $\mathbb{Q}_f\mathbb{Q}_F$). 
\end{conj}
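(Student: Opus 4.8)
This statement is a conjecture, and a proof in full generality is well beyond what is currently available; what I would aim for is the heuristic that forces the precise shape of the congruence, a strategy for a conditional proof in the spirit of Ribet's converse to Herbrand's theorem, and the explicit verification scheme that the rest of this paper carries out. The right-hand side $q^{k-2}+a_q+q^{j+k-1}$ is exactly the $q$-th spinor Hecke eigenvalue of the Klingen--Eisenstein series on $K(N)$ obtained by inducing $f$ from the Klingen parabolic; equivalently it is the trace of Frobenius at $q$ on the reducible four-dimensional symplectic representation $\bar\rho_f \oplus \bar\chi_l^{\,k-2} \oplus \bar\chi_l^{\,j+k-1}$, where $\bar\rho_f$ is the mod $\lambda$ representation attached to $f$ and $\bar\chi_l$ is the mod $l$ cyclotomic character (note $(k-2)+(j+k-1)=j+2k-3$ matches $\det\bar\rho_f$, so the package is genuinely symplectic of the right similitude). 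The hypothesis $\mathrm{ord}_\lambda\,\Lambda_{\mathrm{alg}}(f,j+k)>0$ is the arithmetic input: it should force this reducible representation to be the semisimplification of an \emph{irreducible} mod $\lambda$ representation, which by modularity for $\mathrm{GSp}_4$ ought to arise from a genuine cuspidal eigenform $F \in S_{j,k}^{\mathrm{new}}(K(N))$ satisfying the asserted congruence.

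To turn this into a proof along Eisenstein-ideal lines one would proceed in four steps. First, construct the Klingen--Eisenstein lift of $f$ integrally and compute its constant terms along the Siegel and Klingen boundary strata, showing that the relevant component is a $\lambda$-unit multiple of $\Lambda_{\mathrm{alg}}(f,j+k)$; this is where the hypotheses $l > j+2k-2$ and $l\nmid N$ enter, ensuring that the competing boundary contributions and any Hecke-eigenvalue denominators are $\lambda$-units. Second, let $\mathbb{T}$ be the Hecke algebra acting on the full space of weight $(j,k)$, level $N$ paramodular forms, localise at the maximal ideal $\mathfrak{m}$ cut out by the eigensystem $q^{k-2}+a_q+q^{j+k-1} \bmod \lambda$, and use the divisibility of the constant term to show that the Eisenstein ideal is proper in $\mathbb{T}_\mathfrak{m}$, so that $\mathfrak{m}$ lies in the support of the cuspidal quotient. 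Third, run a lattice or pseudo-representation argument over $\mathbb{T}_\mathfrak{m}$ (in the style of Bella\"iche--Chenevier and Urban) to produce a nonsplit self-dual extension and hence a cuspidal eigensystem realising the congruence. Finally, check that the resulting $F$ is paramodular-new and, because $j>0$, is of general type --- neither Saito--Kurokawa nor Yoshida/CAP --- so that the congruence relates ``generic'' objects as stated.

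Since none of these ingredients is available unconditionally in the level-$p$, vector-valued setting, the plan actually followed here is to replace ``prove'' by ``verify in many cases''. Using Ibukiyama's conjectural Jacquet--Langlands correspondence, one transfers the problem to spaces of algebraic modular forms on a definite inner form of $\mathrm{GSp}_4$; these spaces are finite-dimensional, admit an explicit description via genera of quaternionic hermitian lattices, and their Hecke eigenvalues $b_q$ can be computed exactly by linear algebra over $\mathbb{Q}$ from Brandt-type matrices. Independently one computes the $a_q$ and, with Harder's normalisation of $\Omega$, the algebraic $L$-value $\Lambda_{\mathrm{alg}}(f,j+k)$; one then factors it and, for each prime $\lambda$ dividing it, checks that $b_q \equiv q^{k-2}+a_q+q^{j+k-1} \bmod \lambda$ for all small $q\nmid N$.

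The main obstacle to a genuine proof is the passage from the reducible pseudo-representation (or the numerical congruence) to a truly \emph{cuspidal} Siegel eigenform with the correct paramodular level and the right local behaviour at $p$: controlling newness, excluding CAP and endoscopic contributions, and matching Harder's canonical period integrally are each substantial, and the deformation theory of four-dimensional symplectic representations needed for the lattice argument is far less developed than its $\mathrm{GL}_2$ analogue. In the computational approach the corresponding caveat is that Ibukiyama's correspondence is itself conjectural and that the precise integral normalisation of $\Omega$ must be pinned down before a genuine divisibility can be asserted; these are the points at which the evidence produced below is, for the moment, conditional.
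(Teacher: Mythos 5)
You have correctly recognised that this statement is a conjecture with no proof in the paper, and your third paragraph accurately describes the strategy the paper actually pursues: transfer to algebraic modular forms on the definite inner form $\mathrm{GU}_2(D)$ via Ibukiyama's (conjectural) correspondence, computation of the $b_q$ by the trace formula, and numerical verification of the congruence against MAGMA data for $f$ and $\Lambda_{\mathrm{alg}}(f,j+k)$. Your additional sketch of a Klingen--Eisenstein/Eisenstein-ideal route to a conditional proof goes beyond what the paper attempts but is consistent with the known heuristic origin of the congruence, and your caveats (conjecturality of the Ibukiyama transfer, normalisation of $\Omega$) match the paper's own.
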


It should be noted that Harder's conjecture has still not been proved for level $1$ forms. However the specific example with $j=4, k=10$ and $l=41$ mentioned in Harder's paper has recently been proved in a paper by Chenevier and Lannes \cite{chenevier2}. The proof uses the Niemeier classification of $24$-dimensional lattices and is specific to this particular case.

Following the release of the level $1$ conjecture, Faber and Van der Geer were able to do computations when $\text{dim}(S_{j,k}(\text{Sp}_4(\mathbb{Z})))=1$. They have now exhausted such spaces and in each case have verified the congruence for a significant number of Hecke eigenvalues. Extra evidence for the case $j=2$ is given by Ghitza, Ryan and Sulon \cite{ghitza2}.

For the level $p$ conjecture a substantial amount of evidence has been provided by Bergstr\"om et al for level $2$ forms \cite{bergstrom}. Their methods are specific to this level. A small amount of evidence is known beyond level $2$. In particular a congruence has been found with $(j,k,p,l) = (0,3,61,43)$ by Anton Mellit (p.$99$ of \cite{harder3}).

In this paper we use the theory of algebraic modular forms to provide evidence for the conjecture at levels $p=2,3,5,7$. The methods discussed can be extended to work for other levels.

\section{Algebraic Modular Forms}
In general, it is quite tough to compute Hecke eigensystems for paramodular forms. Fortunately for a restricted set of levels there is a (conjectural) Jacquet-Langlands style correspondence for $\text{GSp}_4$ due to Ihara and Ibukiyama \cite{ibuk1}. 

Explicitly it is expected that there is a Hecke equivariant isomorphism between the spaces $S_{j,k}^{\text{new}}(K(p))$ and certain spaces of algebraic modular forms. Bearing this in mind we give the reader a brief overview of the general theory of such forms.

\subsection{The spaces $\mathcal{A}(G,K_f,V)$ of algebraic forms}

Let $G/\mathbb{Q}$ be a connected reductive group with the added condition that the Lie group $G(\mathbb{R})$ is connected and compact modulo center. Fix an open compact subgroup $K_f \subset G(\mathbb{A}_f)$. Also let $V$ be (the space of) a finite dimensional algebraic representation of $G$, defined over a number field $F$.

\begin{define}
The $F$-vector space of \textit{algebraic modular forms} of level $K_f$, weight $V$ for $G$ is: \[\mathcal{A}(G,K_f,V) \cong \{f: G(\mathbb{A}_f) \rightarrow V\,|\,f(\gamma g k) = \gamma f(g), \forall(\gamma,g,k)\in G(\mathbb{Q})\times G(\mathbb{A})\times K_f\}.\]
\end{define}

Fix a set of representatives $T = \{z_1,z_2,...,z_h\}\in G(\mathbb{A}_f)$ for $G(\mathbb{Q})\backslash G(\mathbb{A}_f)/ K_f$. There is a natrual embedding: \[\phi: \mathcal{A}(G,K_f,V) \longrightarrow V^h\]\[f \longmapsto (f(z_1),...,f(z_h)).\]

\begin{thm}\label{AHAH}
The map $\phi$ induces an isomorphism: \[\mathcal{A}(G,K_f,V) \cong \bigoplus_{m=1}^h V^{\Gamma_m},\] where $\Gamma_m = G(\mathbb{Q})\cap z_m K_f z_m^{-1}$ for each $m$.
\end{thm}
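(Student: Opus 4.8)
The strategy is to show that $\phi$ is injective with image exactly $\bigoplus_{m=1}^h V^{\Gamma_m} \subseteq V^h$. First I would verify that $\phi$ indeed lands in this subspace. Given $f \in \mathcal{A}(G,K_f,V)$ and $m$, take any $\gamma \in \Gamma_m = G(\mathbb{Q}) \cap z_m K_f z_m^{-1}$; write $\gamma = z_m k z_m^{-1}$ with $k \in K_f$. Then using the transformation law $f(\gamma' g k') = \gamma' f(g)$ with $\gamma' = \gamma$, $g = z_m$, $k' = k$ (legitimate since $\gamma z_m = z_m k$), we get $\gamma f(z_m) = f(\gamma z_m) = f(z_m k) = f(z_m)$, so $f(z_m) \in V^{\Gamma_m}$. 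Hence $\phi(f) \in \bigoplus_m V^{\Gamma_m}$.

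Next, injectivity. Suppose $\phi(f) = 0$, i.e. $f(z_m) = 0$ for all $m$. Any $g \in G(\mathbb{A}_f)$ can be written $g = \gamma z_m k$ for some $m$ and some $\gamma \in G(\mathbb{Q})$, $k \in K_f$, by the definition of $T$ as a set of double-coset representatives. Then $f(g) = f(\gamma z_m k) = \gamma f(z_m) = 0$. So $f \equiv 0$, giving injectivity.

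For surjectivity, given a tuple $(v_1, \dots, v_h)$ with $v_m \in V^{\Gamma_m}$, I would define $f$ by $f(g) = \gamma \cdot v_m$ whenever $g = \gamma z_m k$, and check this is well-defined. The key point is the ambiguity in the decomposition: if $\gamma z_m k = \gamma' z_{m'} k'$, then first $m = m'$ since the $z_m$ represent distinct double cosets, and then $\gamma^{-1}\gamma' = z_m k (k')^{-1} z_m^{-1} \in G(\mathbb{Q}) \cap z_m K_f z_m^{-1} = \Gamma_m$; since $v_m$ is $\Gamma_m$-invariant, $\gamma' v_m = \gamma (\gamma^{-1}\gamma') v_m = \gamma v_m$, so $f$ is well-defined. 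One then checks the transformation law $f(\delta g k_0) = \delta f(g)$ for $\delta \in G(\mathbb{Q})$, $k_0 \in K_f$ directly from the definition, using that $\delta \gamma z_m (k k_0)$ is again a decomposition of the right shape. Finally $\phi(f) = (v_1, \dots, v_h)$ since $z_m = 1 \cdot z_m \cdot 1$. Combining injectivity and surjectivity (and $F$-linearity of $\phi$, which is immediate), $\phi$ induces the claimed isomorphism.

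I do not expect any serious obstacle here; the only point requiring a little care is the well-definedness of $f$ in the surjectivity step, where one must pin down exactly how two decompositions $g = \gamma z_m k$ can differ and then invoke $\Gamma_m$-invariance of $v_m$. Everything else is a routine unwinding of the definitions of $\mathcal{A}(G,K_f,V)$ and of the double-coset representatives $T$. Note that the finiteness of $h$ (needed for the statement to make sense) is guaranteed by the compactness of $G(\mathbb{R})$ modulo center together with the openness and compactness of $K_f$, which is part of the standing hypotheses.
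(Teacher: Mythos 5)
Your proof is correct and is the standard argument: the paper states Theorem \ref{AHAH} without any proof (it is quoted from Gross's work on algebraic modular forms), and your unwinding of the decomposition $g = \gamma z_m k$ --- checking the image lies in $\bigoplus_m V^{\Gamma_m}$, injectivity, and the well-definedness of the inverse map via $\Gamma_m$-invariance --- is precisely what that standard proof amounts to. No gaps.
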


\begin{cor}\label{H}
The spaces $\mathcal{A}(G,K_f, V)$ are finite dimensional.
\end{cor}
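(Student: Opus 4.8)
The plan is to obtain Corollary \ref{H} as an essentially immediate consequence of Theorem \ref{AHAH}. That theorem already identifies
\[
\mathcal{A}(G,K_f,V)\;\cong\;\bigoplus_{m=1}^{h}V^{\Gamma_m},\qquad \Gamma_m=G(\mathbb{Q})\cap z_mK_fz_m^{-1},
\]
so the finite-dimensionality of the left-hand side reduces to two assertions: first, that the indexing set is finite, i.e.\ that $h=\lvert G(\mathbb{Q})\backslash G(\mathbb{A}_f)/K_f\rvert<\infty$; and second, that each summand $V^{\Gamma_m}$ is a finite-dimensional $F$-vector space. I would state the proof as the verification of these two points followed by the observation that a finite direct sum of finite-dimensional spaces is finite-dimensional.

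The second point is formal: by hypothesis $V$ is a finite-dimensional algebraic representation of $G$ over $F$, and $V^{\Gamma_m}$ is, by construction, an $F$-linear subspace of $V$; hence $\dim_F V^{\Gamma_m}\le \dim_F V<\infty$. No properties of $\Gamma_m$ beyond its being a group acting linearly on $V$ are needed here. In particular this already yields the quantitative bound $\dim_F\mathcal{A}(G,K_f,V)\le h\cdot\dim_F V$ once $h$ is known to be finite.

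The only input that is not purely formal, and hence the step I would flag as the crux of the argument, is the finiteness of the class number $h$. This is precisely the statement that the arithmetic double coset space $G(\mathbb{Q})\backslash G(\mathbb{A}_f)/K_f$ attached to the reductive group $G$ and to an arbitrary open compact subgroup $K_f\subset G(\mathbb{A}_f)$ is finite. This is a standard fact from reduction theory for adele groups over number fields (due to Borel), and it is exactly the finiteness that underlies Gross's general theory of algebraic modular forms; in writing the paper I would simply cite this result rather than reprove it. It is worth noting that this finiteness holds for any reductive $G$ and does not use the running hypothesis that $G(\mathbb{R})$ is compact modulo its centre — that hypothesis is what makes the spaces $\mathcal{A}(G,K_f,V)$ the right objects (and forces the groups $\Gamma_m$ to be finite modulo centre), but it plays no role in this particular corollary. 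Granting $h<\infty$ together with $\dim_F V<\infty$, Theorem \ref{AHAH} gives the result at once.
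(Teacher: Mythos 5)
Your proof is correct and is exactly the route the paper intends: Corollary \ref{H} is stated without proof as an immediate consequence of Theorem \ref{AHAH}, the point being that each $V^{\Gamma_m}$ is a subspace of the finite-dimensional $V$ and the number $h$ of summands is finite. You rightly isolate the finiteness of the class number $h$ (Borel's finiteness theorem, as used in Gross's paper) as the only non-formal input, and your observation that this does not require $G(\mathbb{R})$ to be compact modulo centre is accurate.
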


A pleasing feature of the theory is that the groups $\Gamma_m$ are often finite. Gross gives many equivalent conditions for when this happens \cite{gross}. One such condition is the following.
 
\begin{prop}\label{ABC}
The groups $\Gamma_m$ are finite if and only if $G(\mathbb{Z})$ is finite.
\end{prop}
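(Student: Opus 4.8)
The plan is to identify each $\Gamma_m$ as an arithmetic subgroup of $G(\mathbb{Q})$ and then to invoke the elementary fact that finiteness is an invariant of the commensurability class; since $G(\mathbb{Z})$ is itself an arithmetic subgroup of $G(\mathbb{Q})$, the equivalence follows immediately. To set things up I would fix a closed embedding $G\hookrightarrow\mathrm{GL}_n$ defined over $\mathbb{Q}$ (possible as $G$ is affine), giving an integral model with $G(\mathbb{Z}_p)=G(\mathbb{Q}_p)\cap\mathrm{GL}_n(\mathbb{Z}_p)$ and an open compact subgroup $\widehat K=\prod_p G(\mathbb{Z}_p)\subseteq G(\mathbb{A}_f)$, so that $G(\mathbb{Z})=G(\mathbb{Q})\cap\widehat K$ under the diagonal embedding. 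Note that each $z_mK_fz_m^{-1}$ is again an open compact subgroup of $G(\mathbb{A}_f)$, since conjugation by a fixed element is a topological automorphism.

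First I would record that any two open compact subgroups $U_1,U_2\subseteq G(\mathbb{A}_f)$ are commensurable: $U_1\cap U_2$ is an open subgroup of the profinite group $U_1$, hence of finite index in it, and symmetrically in $U_2$. Applying this with $U_1=z_mK_fz_m^{-1}$ and $U_2=\widehat K$, it remains to check that commensurability is preserved under intersection with the subgroup $G(\mathbb{Q})$. Writing $W=U_1\cap U_2$, the natural map
\[\big(G(\mathbb{Q})\cap U_1\big)\big/\big(G(\mathbb{Q})\cap W\big)\longrightarrow U_1/W\]
is injective, so $G(\mathbb{Q})\cap W$ has finite index in $G(\mathbb{Q})\cap U_1$, and likewise in $G(\mathbb{Q})\cap U_2$. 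Hence $\Gamma_m=G(\mathbb{Q})\cap z_mK_fz_m^{-1}$ and $G(\mathbb{Z})=G(\mathbb{Q})\cap\widehat K$ are commensurable in $G(\mathbb{Q})$. Since a finite-index subgroup of an infinite group is infinite and a group with a finite subgroup of finite index is finite, it follows that $\Gamma_m$ is finite if and only if $G(\mathbb{Z})$ is finite, for every $m$; this is the proposition. As a by-product, replacing the integral model by another changes $\widehat K$ only at finitely many primes and only to another open compact subgroup, so the corresponding groups $G(\mathbb{Z})$ are mutually commensurable and the condition ``$G(\mathbb{Z})$ is finite'' does not depend on the choice.

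The argument is essentially formal, so I do not anticipate a serious obstacle. The one place calling for genuine care is the passage from $G(\mathbb{A}_f)$ down to $G(\mathbb{Q})$: one must argue through the injection of coset spaces displayed above rather than through any naive multiplicativity of indices, since the index of $G(\mathbb{Q})\cap W$ in $G(\mathbb{Q})\cap U_1$ need not relate simply to the index of $W$ in $U_1$. Beyond that, everything rests on the single standard fact that an open subgroup of a profinite group has finite index.
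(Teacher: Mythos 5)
Your argument is correct and complete: the reduction of finiteness to commensurability, the observation that any two open compact subgroups of $G(\mathbb{A}_f)$ are commensurable, and the injection of coset spaces used to pass from $G(\mathbb{A}_f)$ down to $G(\mathbb{Q})$ are all sound, and the identification $G(\mathbb{Z})=G(\mathbb{Q})\cap\prod_p G(\mathbb{Z}_p)$ is justified by $\mathbb{Z}=\bigcap_p\mathbb{Z}_p$ inside $\mathbb{Q}$. The paper itself gives no proof of this proposition; it simply quotes it as one of the equivalent conditions in Gross's paper on algebraic modular forms, and the commensurability argument you give is exactly the standard one underlying that citation, so there is nothing to contrast.
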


\subsection{Hecke Operators}

Let $u\in G(\mathbb{A}_f)$ and fix a decomposition $K_f u K_f = \coprod_{i=1}^r u_i K_f$. It is well known that finitely many representatives occur. Then $T_u$ acts on $f\in\mathcal{A}(G,K_f,V)$ via \[T_u(f)(g) := \sum_{i=1}^r f(gu_i), \qquad \forall g\in G(\mathbb{A}_f).\] It is easy to see that this is independent of the choice of representatives $u_i$ since they are determined up to right multiplication by $K_f$.

We wish to find the Hecke representatives $u_i$ explicitly and efficiently. To this end a useful observation can be made when the class number is one.

\begin{prop}\label{E}
If $h=1$ then we may choose Hecke representatives that lie in $G(\mathbb{Q})$.
\end{prop}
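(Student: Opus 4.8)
The plan is to unwind the meaning of the hypothesis $h=1$. By definition the class number is $h = |G(\mathbb{Q})\backslash G(\mathbb{A}_f)/K_f|$, so $h=1$ says precisely that $G(\mathbb{A}_f) = G(\mathbb{Q})K_f$; in other words, every element of $G(\mathbb{A}_f)$ can be written as a rational element times an element of $K_f$. This is the only input we need.

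Now fix $u\in G(\mathbb{A}_f)$ together with a decomposition $K_f u K_f = \coprod_{i=1}^r u_i K_f$ into right $K_f$-cosets, as in the definition of $T_u$. First I would apply the observation above to each $u_i$ individually, writing $u_i = \gamma_i k_i$ with $\gamma_i\in G(\mathbb{Q})$ and $k_i\in K_f$. Then $\gamma_i K_f = \gamma_i k_i K_f = u_i K_f$, so replacing $u_i$ by $\gamma_i$ leaves the coset $u_i K_f$ unchanged.

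Finally I would check that $\{\gamma_i\}_{i=1}^r$ is again a legitimate set of Hecke representatives. Since $\gamma_i K_f = u_i K_f$ for every $i$, we get $\coprod_{i=1}^r \gamma_i K_f = \coprod_{i=1}^r u_i K_f = K_f u K_f$, and the union is still disjoint because the cosets occurring are literally the same as before. Hence $T_u(f)(g) = \sum_{i=1}^r f(g\gamma_i)$, computed using representatives $\gamma_i\in G(\mathbb{Q})$, as claimed.

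I do not anticipate any genuine obstacle: the statement is essentially a restatement of the definition of the class number, and the only point requiring (minimal) care is that the passage from $u_i$ to $\gamma_i$ preserves both the covering of $K_f u K_f$ and the disjointness of its coset decomposition — which it does trivially, since the cosets themselves are not altered.
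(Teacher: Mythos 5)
Your argument is correct and is exactly the standard one: $h=1$ gives $G(\mathbb{A}_f)=G(\mathbb{Q})K_f$, so each coset representative $u_i$ can be replaced by its rational part $\gamma_i$ without changing the coset $u_iK_f$. The paper states this proposition without proof, evidently regarding precisely this observation as immediate, so your write-up supplies the intended argument.
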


Finally we note that for $G$ satisfying Proposition \ref{ABC} there is a natural inner product on the space $\mathcal{A}(G,K,V)$. This is given in Gross' paper \cite{gross} but we shall give the rough details here.

\begin{lem}\label{MDN}
Let $G$ satisfy the property of Proposition \ref{ABC} and $V$ be a finite dimensional algebraic representation of $G$, defined over $\mathbb{Q}$. Then there exists a character $\mu : G \rightarrow \mathbb{G}_m$ and a positive definite symmetric bilinear form $\langle,\rangle: V\times V\rightarrow\mathbb{Q}$ such that: \[\langle\gamma u, \gamma v\rangle = \mu(\gamma)\langle u,v\rangle\] for all $\gamma\in G(\mathbb{Q})$.
\end{lem}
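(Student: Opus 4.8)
The plan is to build $\langle\,,\rangle$ over $\mathbb{R}$ by averaging over a maximal compact subgroup, then check that the construction is already rational and that the scaling it produces is an algebraic character. It is enough to treat $V$ irreducible (this is the case needed in the sequel), so I would reduce to that at the outset.

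First I would work at the real place. Since $G(\mathbb{R})$ is compact modulo its centre, the derived group $G^{\mathrm{der}}(\mathbb{R})$, which has finite centre, is compact; in particular $G^{\mathrm{der}}$ is anisotropic. Pick any positive definite symmetric bilinear form on $V\otimes_{\mathbb{Q}}\mathbb{R}$ and integrate its $G^{\mathrm{der}}(\mathbb{R})$-translates against Haar measure; the result is a positive definite, symmetric, $G^{\mathrm{der}}(\mathbb{R})$-invariant form. Because $G^{\mathrm{der}}(\mathbb{R})$ is Zariski dense in $G^{\mathrm{der}}$, this form is $G^{\mathrm{der}}$-invariant in the algebraic sense, so after extension of scalars it lies in the $\mathbb{Q}$-subspace $\mathcal{B}:=(\mathrm{Sym}^2 V^{\vee})^{G^{\mathrm{der}}}$ of $\mathrm{Sym}^2 V^{\vee}$. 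As positive-definiteness is an open condition and $\mathcal{B}(\mathbb{Q})$ is dense in $\mathcal{B}\otimes\mathbb{R}$, I can already choose a positive definite $\langle\,,\rangle\in\mathcal{B}(\mathbb{Q})$.

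Next I would produce $\mu$. Since $G^{\mathrm{der}}$ is normal and acts trivially on $\mathcal{B}$, the torus $G^{\mathrm{ab}}=G/G^{\mathrm{der}}$ acts on $\mathcal{B}$, and it suffices to arrange that $\langle\,,\rangle$ be an eigenvector. Decomposing $V_{\overline{\mathbb{Q}}}$ into absolutely irreducible constituents, their central characters are permuted by Galois and hence, since the character group of the maximal split central torus $Z_s$ is Galois-invariant, have a common restriction $\omega_0$ to $Z_s$; a short computation (a $G^{\mathrm{der}}$-invariant pairing pairs constituents whose $Z_s$-central characters both equal $\omega_0$, hence scales by $\omega_0^{2}$) shows that $Z_s$ acts on all of $\mathcal{B}$ through $\omega_0^{2}$. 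Now average $\langle\,,\rangle$ over a maximal compact $K_{\infty}\subset G(\mathbb{R})$: here the hypothesis $G(\mathbb{Z})<\infty$ enters, forcing the maximal anisotropic subtorus $G^{\mathrm{ab}}_{\mathrm{an}}$ of $G^{\mathrm{ab}}$ to have compact real points and $G(\mathbb{R})=K_{\infty}\cdot Z_s(\mathbb{R})^{\circ}$, so the averaged form is $G^{\mathrm{ab}}_{\mathrm{an}}(\mathbb{R})$-invariant and, re-descending to $\mathbb{Q}$ as before, may be taken in $\mathcal{B}^{G^{\mathrm{ab}}_{\mathrm{an}}}(\mathbb{Q})$. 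On this subspace $G^{\mathrm{ab}}$ acts through the \emph{split} torus $G^{\mathrm{ab}}/G^{\mathrm{ab}}_{\mathrm{an}}$, hence diagonalizably over $\mathbb{Q}$; since $Z_s\to G^{\mathrm{ab}}/G^{\mathrm{ab}}_{\mathrm{an}}$ is an isogeny and all weights occurring in $\mathcal{B}$ restrict to the single character $\omega_0^{2}$ on $Z_s$, exactly one weight $\mu$ can occur, and it is automatically an algebraic character of $G$ defined over $\mathbb{Q}$. Then $\langle\gamma u,\gamma v\rangle=\mu(\gamma)\langle u,v\rangle$ for all $\gamma\in G(\mathbb{Q})$, as required.

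The step I expect to need the most care is this last one: reconciling the averaging at $\infty$ with $\mathbb{Q}$-rationality, and in particular checking that the scaling factor is not merely a continuous character of $G(\mathbb{R})$ but the restriction of an algebraic $\mu:G\to\mathbb{G}_m$ over $\mathbb{Q}$. The finiteness hypothesis $G(\mathbb{Z})<\infty$ is essential precisely here: without it $G^{\mathrm{ab}}_{\mathrm{an}}(\mathbb{R})$ can be non-compact (for instance $G^{\mathrm{ab}}$ a norm-one torus of a real quadratic field), the averaging over $K_{\infty}$ no longer kills the unwanted weights, and the conclusion can genuinely fail. Everything else — the averaging, the openness/density descent, and Schur's lemma for the central characters — is routine.
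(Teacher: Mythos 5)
The paper does not actually prove this lemma: it is imported from Gross \cite{gross}, and the sentence ``we shall give the rough details here'' refers only to the surrounding construction of the inner product. So the real comparison is with Gross's argument, and your sketch is essentially that argument: average a form over the compact group $G^{\mathrm{der}}(\mathbb{R})$, use Zariski density of real points to get algebraic $G^{\mathrm{der}}$-invariance, descend to $\mathbb{Q}$ via density of $\mathcal{B}(\mathbb{Q})$ in $\mathcal{B}\otimes\mathbb{R}$ together with openness of positive definiteness, and pin down the similitude character through the split part of the centre. The outline is sound, and you correctly locate where the finiteness of $G(\mathbb{Z})$ enters.

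Two points. First, your opening move is not a reduction: for reducible $V$ the lemma as stated is simply false. Take $G=\mathbb{G}_m$ and $V=\mathbb{Q}\oplus\mathbb{Q}$ with $t$ acting by $\mathrm{diag}(1,t)$; then $G(\mathbb{Z})=\{\pm 1\}$ is finite, but the restriction of any positive definite form to each summand is nonzero, forcing $\mu(t)=1$ and $\mu(t)=t^{2}$ simultaneously. Irreducibility of $V$ over $\mathbb{Q}$ (as assumed by Gross) must be added to the hypotheses rather than ``reduced to''; this is harmless for the paper, since every $V$ it uses ($V_{j}$, $V_{j,k-3}$ and their external tensor product) is irreducible. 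Second, the one step you leave implicit that genuinely needs an argument is why the $K_{\infty}$-average is \emph{algebraically} invariant under $G^{\mathrm{ab}}_{\mathrm{an}}$: the clean route is that the maximal $\mathbb{Q}$-anisotropic central subtorus $Z_{\mathrm{an}}$ has compact real points by hypothesis, is central and hence contained in $K_{\infty}$, and $Z_{\mathrm{an}}(\mathbb{R})$ is Zariski dense in $Z_{\mathrm{an}}$, which surjects onto $G^{\mathrm{ab}}_{\mathrm{an}}$. With that supplied, your final identification of the unique weight $\mu$ through the isogeny $Z_{s}\to G^{\mathrm{ab}}/G^{\mathrm{ab}}_{\mathrm{an}}$ is correct.
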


Taking adelic points we have a character $\mu':G(\mathbb{A}) \rightarrow \mathbb{A}^{\times}$. Let $\mu_{\mathbb{A}} = f\circ \mu'$, where $f: \mathbb{A}^{\times} \rightarrow \mathbb{Q}^{\times}$ is the natural projection map coming from the decomposition $\mathbb{A}^{\times} = \mathbb{Q}^{\times}\mathbb{R}^+\hat{\mathbb{Z}}^{\times}$.

\begin{prop}\label{AZA}
Let $G$ satisfy the property of Proposition \ref{ABC}. Then $\mathcal{A}(G,K,V)$ has a natural inner product given by: \[\langle f,g\rangle = \sum_{m=1}^h \frac{1}{|\Gamma_m|\mu_{\mathbb{A}}(z_m)}\langle f(z_m), g(z_m) \rangle.\]
\end{prop}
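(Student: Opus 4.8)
The plan is to verify that the formula
\[
\langle f,g\rangle = \sum_{m=1}^h \frac{1}{|\Gamma_m|\mu_{\mathbb{A}}(z_m)}\langle f(z_m), g(z_m)\rangle
\]
really does define a positive definite symmetric bilinear form on $\mathcal{A}(G,K,V)$, and that it is natural in the sense that it does not depend on the choice of representatives $z_m$ for $G(\mathbb{Q})\backslash G(\mathbb{A}_f)/K$. Symmetry and bilinearity are immediate from the corresponding properties of $\langle,\rangle$ on $V$ supplied by Lemma~\ref{MDN}; the content is positive definiteness and well-definedness. Positive definiteness needs a little care because $\mu_{\mathbb{A}}(z_m)$ is \emph{a priori} only a nonzero rational number, so one must check it is positive. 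This follows because $\mu_{\mathbb{A}}$ is, by construction, the finite-idelic part of $\mu'$, which at the archimedean place takes values in $\mathbb{R}^+$ (as $\mu$ is a character to $\mathbb{G}_m$ and $G(\mathbb{R})$ is connected, $\mu(G(\mathbb{R}))\subseteq\mathbb{R}^+$), so the product formula for $\mu'(z_m)\in\mathbb{A}^\times$ forces the $\mathbb{Q}^\times$-component $\mu_{\mathbb{A}}(z_m)$ to be positive. Granting that, each summand is a positive multiple of $\langle f(z_m),f(z_m)\rangle\ge 0$, and it vanishes identically iff every $f(z_m)=0$, which by Theorem~\ref{AHAH} (the injectivity of $\phi$) means $f=0$.

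**The key steps**, in order, are: (i) record that $|\Gamma_m|$ is finite by Proposition~\ref{ABC}, so the coefficients make sense; (ii) show $\mu_{\mathbb{A}}(z_m)>0$ via the archimedean-sign argument above, so the coefficients $\tfrac{1}{|\Gamma_m|\mu_{\mathbb{A}}(z_m)}$ are positive reals; (iii) deduce symmetry, bilinearity and positive-definiteness termwise, using injectivity of $\phi$ from Theorem~\ref{AHAH} for the last point; (iv) check independence of the choice of $z_m$: replacing $z_m$ by $\gamma z_m k$ with $\gamma\in G(\mathbb{Q})$, $k\in K$ replaces $f(z_m)$ by $\gamma f(z_m)$ (and similarly for $g$), replaces $\Gamma_m$ by the conjugate group $\gamma\Gamma_m\gamma^{-1}$ of the same order, and multiplies $\mu_{\mathbb{A}}(z_m)$ by $\mu_{\mathbb{A}}(\gamma)\mu_{\mathbb{A}}(k)$; by Lemma~\ref{MDN} the form transforms as $\langle\gamma f(z_m),\gamma g(z_m)\rangle=\mu(\gamma)\langle f(z_m),g(z_m)\rangle$, and one checks that $\mu(\gamma)$ (image of $\gamma\in G(\mathbb{Q})$ under the rational character) equals $\mu_{\mathbb{A}}(\gamma)$ up to the $K$-part, so the net effect on the $m$-th summand is trivial. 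Finally one should note the sum is finite since $h<\infty$ (a consequence, e.g., of Corollary~\ref{H} and the general theory), so there are no convergence issues.

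**The main obstacle** I expect is step (iv), the careful bookkeeping of how $\mu_{\mathbb{A}}$ behaves under $z_m\mapsto\gamma z_m k$: one has to unwind the definition $\mu_{\mathbb{A}}=f\circ\mu'$ through the decomposition $\mathbb{A}^\times=\mathbb{Q}^\times\mathbb{R}^+\hat{\mathbb{Z}}^\times$, track the $K$-contribution $\mu_{\mathbb{A}}(k)$ (which one wants to be $1$, requiring $\mu(K)\subseteq\hat{\mathbb{Z}}^\times$, true for $K$ compact open), and match the diagonal rational $\mu(\gamma)$ with its image under $f$. This is the place where the somewhat informal setup of the character $\mu$ and its adelic avatar in the excerpt must be pinned down precisely. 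Everything else is a direct consequence of Lemma~\ref{MDN}, Proposition~\ref{ABC} and Theorem~\ref{AHAH}. I would present the proof by first disposing of symmetry and bilinearity in one line, then proving positivity of $\mu_{\mathbb{A}}(z_m)$, then positive-definiteness via $\phi$, and finally the independence-of-representatives computation as the substantive final paragraph.
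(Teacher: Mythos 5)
Your proposal is correct, and in fact the paper offers no proof of Proposition \ref{AZA} at all: it simply defers to Gross \cite{gross}, so your argument fills a gap rather than diverging from anything. The structure you lay out (symmetry and bilinearity termwise from Lemma \ref{MDN}; positivity of $\mu_{\mathbb{A}}(z_m)$; positive-definiteness via injectivity of $\phi$ from Theorem \ref{AHAH}; independence of the representatives $z_m$) is the standard one and it goes through. Two small points worth pinning down in the write-up. First, the cleanest reason $\mu_{\mathbb{A}}(z_m)>0$ is simply that $z_m\in G(\mathbb{A}_f)$ has trivial archimedean component, so $\mu'(z_m)$ equals $1$ at infinity; in the unique factorization $\mathbb{A}^\times=\mathbb{Q}^\times\mathbb{R}^+\hat{\mathbb{Z}}^\times$ the rational and $\mathbb{R}^+$ parts multiply to the archimedean component, forcing the rational part to be positive. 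Second, in step (iv) the same computation gives $\mu_{\mathbb{A}}(\gamma z_m k)=|\mu(\gamma)|\,\mu_{\mathbb{A}}(z_m)$, whereas the numerator picks up $\mu(\gamma)$ from Lemma \ref{MDN}; these cancel only because $\mu(\gamma)>0$ for every $\gamma\in G(\mathbb{Q})$, which you should note follows from positive-definiteness of $\langle\,,\rangle$ on $V$ (apply $\langle\gamma v,\gamma v\rangle=\mu(\gamma)\langle v,v\rangle$ to any $v\neq 0$). With those observations made explicit, your outline is a complete proof.
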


\subsection{Trace of Hecke operators}

The underlying representation $V$ of $G$ is typically big in dimension and so the action of Hecke operators is, although explicit, quite tough to compute. Fortunately, there is a simple trace formula for Hecke operators on spaces of algebraic modular forms. The details of the formula can be found in \cite{dum2} but we give brief details here.

Note that $G(\mathbb{A}_f)$ acts on the set $Z$ on the left by setting $w\cdot z_i = z_j$ if and only if $G(\mathbb{Q})(wz_i)K_f = G(\mathbb{Q})z_jK_f$.
For each $m=1,2,...,h$ we consider the set $S_m = \{i\,|\, u_i\cdot z_m = z_m\}$. Next for each $i\in S_m$ choose elements $k_{m,i}\in K_f$ and $\gamma_{m,i}\in G(\mathbb{Q})$ such that $\gamma_{m,i}^{-1} u_i z_m k_{m,i} = z_m$.

Let $\chi_V$ denote the character of the representation of $G(\mathbb{Q})$ on $V$. Then the trace formula is as follows.

\begin{thm}(Dummigan)\label{K} \[\text{tr}(T_u) = \sum_{m=1}^{h}\frac{1}{|\Gamma_m|}\sum_{\gamma\in\Gamma_m ,\\ i\in S_m} \chi_V(\gamma_{m,i}\gamma).\]

More generally: \[\text{tr}(T_u^d) = \sum_{m=1}^{h}\frac{1}{|\Gamma_m|}\sum_{\gamma\in\Gamma_m, \\ (i_n)\in S_m^d} \chi_V\left(\left(\prod_{n=1}^d \gamma_{m,i_n}\right)\gamma\right).\]
\end{thm}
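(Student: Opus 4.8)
The plan is to make the isomorphism $\phi$ of Theorem~\ref{AHAH} completely explicit on the Hecke side: one computes the matrix of $T_u$ with respect to the decomposition $\mathcal{A}(G,K_f,V)\cong\bigoplus_{m=1}^h V^{\Gamma_m}$ and then reads off the trace from its diagonal blocks. Throughout I write $\gamma v$ for the action of $\gamma\in G(\mathbb{Q})$ on $v\in V$, so that $\chi_V(\gamma)$ is the trace of the endomorphism $v\mapsto\gamma v$; all traces live on finite-dimensional spaces by Corollary~\ref{H}, and the $\Gamma_m$ are finite as in Proposition~\ref{ABC}.

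First I would identify the diagonal blocks. Fix $m$ and $f\in\mathcal{A}(G,K_f,V)$. For each $i$ the element $z_mu_i$ lies in $G(\mathbb{Q})z_{j(i)}K_f$ for a unique $z_{j(i)}\in T$, and writing $z_mu_i=\gamma_{m,i}z_{j(i)}k_{m,i}$ with $\gamma_{m,i}\in G(\mathbb{Q})$ and $k_{m,i}\in K_f$, the defining property of $\mathcal{A}(G,K_f,V)$ gives $f(z_mu_i)=\gamma_{m,i}f(z_{j(i)})$. Hence, in the coordinates $f\mapsto(f(z_1),\dots,f(z_h))$, the value $T_u(f)(z_m)=\sum_{i=1}^r f(z_mu_i)$ depends on $f(z_m)$ precisely through the terms with $j(i)=m$, i.e.\ through $i\in S_m$, and the $(m,m)$-block of $T_u$ is induced by the operator $B_m\colon V\to V$, $v\mapsto\sum_{i\in S_m}\gamma_{m,i}v$. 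The one point needing care is that $B_m$ really preserves $V^{\Gamma_m}$: the function supported on the single double coset of $z_m$ with value $v\in V^{\Gamma_m}$ there lies in $\mathcal{A}(G,K_f,V)$ (the well-definedness argument is the one underlying $\phi$), and applying $T_u$ to it, the value at $z_m$ of the image is $B_mv$, which must again lie in $V^{\Gamma_m}$. So the $(m,m)$-block is simply $B_m|_{V^{\Gamma_m}}$.

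Then I would take the trace. Since $T_u$ is block-structured, $\text{tr}(T_u)=\sum_{m=1}^h\text{tr}(B_m|_{V^{\Gamma_m}})$. Let $\pi_m=\tfrac1{|\Gamma_m|}\sum_{\gamma\in\Gamma_m}(v\mapsto\gamma v)$, the idempotent with image $V^{\Gamma_m}$. Because $B_m$ preserves $V^{\Gamma_m}$, with respect to the splitting $V=V^{\Gamma_m}\oplus\ker\pi_m$ the composite $B_m\pi_m$ is block-triangular with diagonal blocks $B_m|_{V^{\Gamma_m}}$ and $0$; hence $\text{tr}(B_m|_{V^{\Gamma_m}})=\text{tr}_V(B_m\pi_m)$. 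Expanding $B_m\pi_m$ and using that $\chi_V$ is a class function to reorder the two factors, $\text{tr}_V(B_m\pi_m)=\tfrac1{|\Gamma_m|}\sum_{\gamma\in\Gamma_m}\sum_{i\in S_m}\chi_V(\gamma_{m,i}\gamma)$. Summing over $m$ gives the first formula.

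For $\text{tr}(T_u^d)$ I would iterate this. Via $\phi$, $T_u^d$ corresponds to the $d$-th power of the block operator above, and one expands $T_u^d(f)(z_m)=\sum_{(i_n)}f(z_mu_{i_1}\cdots u_{i_d})$ and decomposes the product one factor at a time, at each stage absorbing the $K_f$-component produced by the previous step into the next double-coset decomposition (left multiplication by an element of $K_f$ only permutes the cosets $u_jK_f$ appearing in $K_fuK_f$). The main obstacle is exactly this bookkeeping: one must check that along any closed path returning to $z_m$ the accumulated $G(\mathbb{Q})$-cocycle is a product $\prod_{n=1}^d\gamma_{m,i_n}$ with all indices in $S_m$, and that nothing further contributes to the trace. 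This becomes transparent when the class number is $h=1$, which is the situation exploited in this paper: then there is a single double coset, $S_1=\{1,\dots,r\}$, the operator $B_1$ preserves $V^{\Gamma_1}$ so that $T_u^d$ is induced by $B_1^{\,d}|_{V^{\Gamma_1}}$, and the projector argument above applied to $B_1^{\,d}$ yields $\text{tr}(T_u^d)=\tfrac1{|\Gamma_1|}\sum_{\gamma\in\Gamma_1}\sum_{(i_n)}\chi_V\!\big(\big(\textstyle\prod_{n=1}^d\gamma_{1,i_n}\big)\gamma\big)$, which is the stated identity.
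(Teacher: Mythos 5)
The paper gives no proof of this theorem at all --- it is quoted from Dummigan \cite{dum2} --- so there is no internal argument to compare yours against. Your proof of the first formula is correct and complete: identifying the $(m,m)$-block of $T_u$ under $\phi$ as $v\mapsto\sum_{i\in S_m}\gamma_{m,i}v$ on $V^{\Gamma_m}$ and extracting its trace via the averaging idempotent $\pi_m$ is exactly the right mechanism. (Note that you have silently replaced the paper's $u_iz_m$ by $z_mu_i$ in the definition of $S_m$ and of $\gamma_{m,i}$; that is the convention forced by $T_u(f)(g)=\sum_i f(gu_i)$, so this is a correction of the quoted setup rather than an error on your part.)

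For the second formula there is a genuine gap, and you have put your finger on precisely where it is without closing it. One has $\text{tr}(T_u^d)=\sum_m\text{tr}\bigl((T_u^d)_{mm}\bigr)$, and $(T_u^d)_{mm}=\sum_{m_1,\dots,m_{d-1}}(T_u)_{m m_1}(T_u)_{m_1m_2}\cdots(T_u)_{m_{d-1}m}$ receives contributions from every closed path $z_m\to z_{m_1}\to\cdots\to z_m$ through the class set, not only from the path that never leaves $z_m$. When $h>1$ there can be tuples $(i_1,\dots,i_d)$ with $z_mu_{i_1}\cdots u_{i_d}\in G(\mathbb{Q})z_mK_f$ but $i_1\notin S_m$; these contribute to the trace yet are not indexed by $S_m^d$, so the verification you defer (``nothing further contributes to the trace'') cannot succeed as stated. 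Your argument therefore establishes $\sum_m\text{tr}\bigl(((T_u)_{mm})^d\bigr)$ rather than $\text{tr}(T_u^d)$; either the displayed power formula must be read as restricted to $h=1$, or the index set must be replaced by such closed paths with $\prod_n\gamma_{m,i_n}$ replaced by the accumulated cocycle along the path. Your $h=1$ argument --- $S_1=\{1,\dots,r\}$, rational representatives via Proposition \ref{E}, $(T_u^d)_{11}=B_1^{\,d}$, then the projector trick --- is correct, and since every computation in this paper has class number one, it covers all cases actually used here; but it does not prove the theorem as stated for general $h$.
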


Letting $u=\text{id}$ we recover the following.

\begin{cor}\label{I}
\[\text{dim}(\mathcal{A}(G,K_f,V)) = \sum_{m=1}^{h}\frac{1}{|\Gamma_m|}\sum_{\gamma\in\Gamma_m} \chi_V(\gamma).\]
\end{cor}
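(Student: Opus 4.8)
The plan is to specialize the trace formula of Theorem \ref{K} to the case $u = \text{id}$, so that the Hecke operator $T_u$ becomes the identity operator on $\mathcal{A}(G,K_f,V)$, whose trace is exactly $\text{dim}(\mathcal{A}(G,K_f,V))$. First I would examine the double coset $K_f \,\text{id}\, K_f = K_f$, which decomposes as a single coset $u_1 K_f$ with $u_1 = \text{id}$ and $r = 1$. Consequently the action of $G(\mathbb{A}_f)$ on $Z$ via $u_1 = \text{id}$ fixes every $z_m$, so $S_m = \{1\}$ for all $m$, and the outer/inner index sets collapse: the sum over $i \in S_m$ is a single term indexed by $i = 1$.

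Next I would pin down the element $\gamma_{m,1}$. By definition we need $k_{m,1} \in K_f$ and $\gamma_{m,1} \in G(\mathbb{Q})$ with $\gamma_{m,1}^{-1}\, u_1\, z_m\, k_{m,1} = z_m$, i.e. $\gamma_{m,1}^{-1} z_m k_{m,1} = z_m$. The obvious choice is $\gamma_{m,1} = k_{m,1} = \text{id}$, which satisfies this trivially. With this choice $\gamma_{m,1}\gamma = \gamma$ for every $\gamma \in \Gamma_m$, so $\chi_V(\gamma_{m,1}\gamma) = \chi_V(\gamma)$. Substituting into the formula of Theorem \ref{K} gives
\[
\text{tr}(T_{\text{id}}) = \sum_{m=1}^{h} \frac{1}{|\Gamma_m|} \sum_{\gamma \in \Gamma_m} \chi_V(\gamma),
\]
and since $T_{\text{id}}$ is the identity map on the finite-dimensional space $\mathcal{A}(G,K_f,V)$ (finite-dimensionality by Corollary \ref{H}), its trace equals $\text{dim}(\mathcal{A}(G,K_f,V))$, which is the claimed identity.

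The only point requiring a word of care — and the nearest thing to an obstacle — is checking that the trace formula is genuinely insensitive to the auxiliary choices of $k_{m,i}$ and $\gamma_{m,i}$, so that the convenient choice $\gamma_{m,1} = \text{id}$ is legitimate. This is already implicit in the statement of Theorem \ref{K} (the formula is asserted as an identity, hence independent of these choices), so I would simply remark that a different choice $\gamma_{m,1}'$ differs from $\text{id}$ by an element of $\Gamma_m$, which merely permutes the inner sum over $\gamma \in \Gamma_m$ and leaves $\sum_{\gamma \in \Gamma_m}\chi_V(\gamma)$ unchanged. Everything else is direct substitution, so the proof is short.
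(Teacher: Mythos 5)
Your proposal is correct and is exactly the paper's argument: the paper simply remarks ``Letting $u=\text{id}$ we recover the following'' before stating Corollary \ref{I}, i.e.\ it specializes Theorem \ref{K} to $u=\text{id}$, where $T_{\text{id}}$ is the identity operator and its trace is the dimension. Your additional verification that $S_m=\{1\}$ and that one may take $\gamma_{m,1}=k_{m,1}=\text{id}$ just fills in the details the paper leaves implicit.
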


When $h=1$ the situation becomes much simpler. In this case we may choose $z_1 = \text{id}$ and $\gamma_{1,i} = u_i\in G(\mathbb{Q})$ for each $i$ (this is possible by Corollary \ref{E}).

\begin{cor}\label{J}
If $h=1$ then we have \[\text{tr}(T_u) = \frac{1}{|\Gamma|}\sum_{\gamma\in\Gamma, 1\leq i\leq r}\chi_V(u_i \gamma),\] where $\Gamma = G(\mathbb{Q}) \cap K_f$.
\end{cor}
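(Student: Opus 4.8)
The plan is to obtain this as a direct specialization of Dummigan's trace formula (Theorem~\ref{K}) with $d=1$, the only work being to check that all the auxiliary data appearing in that theorem can be chosen explicitly when $h=1$. First I would fix $z_1 = \text{id}$ as the single double-coset representative, which is permitted precisely because $h=1$ means $G(\mathbb{Q})\backslash G(\mathbb{A}_f)/K_f$ consists of one class. With this choice the group $\Gamma_1 = G(\mathbb{Q})\cap z_1 K_f z_1^{-1}$ occurring in Theorem~\ref{K} is literally $\Gamma = G(\mathbb{Q})\cap K_f$, and $|\Gamma_1| = |\Gamma|$ is finite (this finiteness, via Proposition~\ref{ABC}, is exactly what is needed for the formula of Theorem~\ref{K} to be well defined).

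Next I would determine the index set $S_1 = \{\,i : u_i\cdot z_1 = z_1\,\}$. Since there is only one class, the left action of $G(\mathbb{A}_f)$ on the singleton set of classes is trivial, so $u_i\cdot z_1 = z_1$ for every $i$ and hence $S_1 = \{1,2,\dots,r\}$. It then remains to exhibit elements $k_{1,i}\in K_f$ and $\gamma_{1,i}\in G(\mathbb{Q})$ satisfying $\gamma_{1,i}^{-1}u_i z_1 k_{1,i} = z_1$, i.e.\ $\gamma_{1,i}^{-1}u_i k_{1,i} = \text{id}$. Here I invoke Corollary~\ref{E}: when $h=1$ the decomposition $K_f u K_f = \coprod_{i=1}^r u_i K_f$ may be taken with each $u_i\in G(\mathbb{Q})$. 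Fixing such rational representatives, one sets $\gamma_{1,i} = u_i\in G(\mathbb{Q})$ and $k_{1,i} = \text{id}\in K_f$, and the required relation $u_i^{-1}u_i\,\text{id}\,\text{id} = \text{id}$ holds trivially.

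Substituting these choices into Theorem~\ref{K} (with $d=1$) collapses the outer sum over $m$ to the single term $m=1$ and gives
\[\text{tr}(T_u) = \frac{1}{|\Gamma_1|}\sum_{\gamma\in\Gamma_1,\ i\in S_1}\chi_V(\gamma_{1,i}\gamma) = \frac{1}{|\Gamma|}\sum_{\gamma\in\Gamma,\ 1\leq i\leq r}\chi_V(u_i\gamma),\]
which is the assertion. There is no analytic obstacle of any kind; the one genuinely substantive ingredient is the appeal to Corollary~\ref{E}, since it is the rationality of the $u_i$ that lets the Hecke representatives double as the correction elements $\gamma_{1,i}$ (without it one would be left with a less usable expression). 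It is also worth remarking that the right-hand side is independent of the chosen rational system: replacing $u_i$ by another rational representative of the same coset forces the change to lie in $K_f\cap G(\mathbb{Q}) = \Gamma$, which merely reindexes the inner sum since $\chi_V$ is a class function on $G(\mathbb{Q})$, and reordering the cosets permutes the terms — consistent with $\text{tr}(T_u)$ being intrinsically defined.
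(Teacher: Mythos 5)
Your proof is correct and matches the paper's own argument, which likewise sets $z_1=\mathrm{id}$, observes that $S_1=\{1,\dots,r\}$ since there is a single class, and takes $\gamma_{1,i}=u_i$ via the rational Hecke representatives of Proposition \ref{E} before specializing Theorem \ref{K}. Your closing remark on independence of the choice of rational representatives is a correct (if optional) addition.
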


The trace formula was introduced to test a U$(2,2)$ analogue of Harder's conjecture. In this paper we will use it to test the level $p$ paramodular version of Harder's conjecture given by Conjecture \ref{V}.

\section{Eichler and Ibukiyama correspondences}

\subsection{Eichler's correspondence}

From now on $D$ will denote a quaternion algebra over $\mathbb{Q}$ ramified at $\{p,\infty\}$ (for a fixed prime $p$) and $\mathcal{O}$ will be a fixed maximal order. Since $D$ is definite, we have that $D_{\infty}^{\times} = D^{\times}\otimes\mathbb{R} \cong \mathbb{H}^{\times}$ is compact modulo center (and is also connected). Thus we may consider algebraic modular forms for the group $G = D^{\times}$.

Also note that in this case each $\Gamma_m$ will be finite since $D^{\times}(\mathbb{Z}) = \mathcal{O}^{\times}$ is finite.

Let $D_q := D \otimes \mathbb{Q}_q$ be the local component at prime $q$ (no restriction on $q$) and let $D_{\mathbb{A}_f}$ be the restricted direct product of the $D_q$'s with respect to the local maximal orders $\mathcal{O}_q := \mathcal{O} \otimes \mathbb{Z}_q$.

Note that if $q\neq p$ then $D_q^{\times} \cong (M_2(\mathbb{Q}_q))^{\times} = \text{GL}_2(\mathbb{Q}_q)$. Thus locally away from the ramified prime, $D^{\times}$ behaves like $\text{GL}_2$.

In fact more is true. It is the case that the reductive groups $D^{\times}$ and GL$_2$ are inner forms of each other. So by the principle of Langlands functoriality we expect a transfer of automorphic forms between $D^{\times}$ and GL$_2$. Eichler gives an explicit description of this transfer.

Let $V_n = \text{Symm}^n(\mathbb{C}^2)$ (for $n\geq 0$). Then $V_n$ gives a well defined representation of $\text{SU}(2)/\{\pm I\}$ if and only if $n$ is even. Thus we get a well defined action on $V_n$ by $D^{\times}$ via: \[D^{\times} \hookrightarrow \mathbb{H}^{\times} \longrightarrow \mathbb{H}^{\times}/\mathbb{R}^{\times} \cong \text{SU}(2)/\{\pm I\}.\]

Take $U = \prod_{q}\mathcal{O}_q^{\times}$. This is an open compact subgroup of $D_{\mathbb{A}_f}^{\times}$.

\begin{thm}(Eichler)
Let $k>2$. Then there is a Hecke equivariant isomorphism: \[S_{k}^{\text{new}}(\Gamma_0(p)) \cong \mathcal{A}(D^{\times},U, V_{k-2}).\]

For $k=2$ the above holds if on the right we quotient out by the space of constant functions.
\end{thm}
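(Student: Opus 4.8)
The plan is to translate both sides into the language of automorphic representations and to read off the isomorphism from the Jacquet--Langlands correspondence for $\mathrm{GL}_2$ (which for $k=2$ I must supplement with a discussion of the one-dimensional spectrum). Throughout write $U_q = \mathcal{O}_q^{\times}$ for the local components of $U$.

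First I would unwind the left-hand side. Since $D_{\infty}^{\times}/\mathbb{R}^{\times}$ is compact the automorphic spectrum of $D^{\times}(\mathbb{A})$ is discrete, and by Theorem~\ref{AHAH} together with the finiteness of each $\Gamma_m = \mathcal{O}_m^{\times}$ the space $\mathcal{A}(D^{\times},U,V_{k-2})$ is a finite-dimensional Hecke module. Absorbing the (compact) archimedean place, there is the standard Hecke-equivariant identification
\[
\mathcal{A}(D^{\times},U,V_{k-2}) \;\cong\; \bigoplus_{\pi}\pi_f^{U},
\]
the sum running over automorphic representations $\pi = \pi_{\infty}\otimes\pi_f$ of $D^{\times}(\mathbb{A})$ with $\pi_{\infty}\cong V_{k-2}$ (as a representation of $D_{\infty}^{\times}$ through $D_{\infty}^{\times}/\mathbb{R}^{\times}$) and $\pi_f^{U}\neq 0$, each occurring once by multiplicity one for $D^{\times}$. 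Because $U_p = \mathcal{O}_p^{\times}$ and $D_p$ is a division algebra, $\pi_p$ must be one-dimensional and unramified (an unramified character composed with the reduced norm), so $\dim\pi_p^{U_p} = 1$; for $q\neq p$ the identifications $D_q^{\times}\cong\mathrm{GL}_2(\mathbb{Q}_q)$, $U_q\cong\mathrm{GL}_2(\mathbb{Z}_q)$ force $\pi_q$ spherical with $\dim\pi_q^{U_q}=1$. Hence $\dim\pi_f^{U}=1$ for every $\pi$ occurring.

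Next I would apply Jacquet--Langlands. Each such $\pi$ transfers to a cuspidal automorphic representation $\Pi$ of $\mathrm{GL}_2(\mathbb{A})$ with trivial central character whose local components are: $\Pi_{\infty}$ the holomorphic discrete series of weight $k$ (the archimedean local transfer of the finite-dimensional representation $V_{k-2}$ of $\mathbb{H}^{\times}/\mathbb{R}^{\times}$); $\Pi_p$ the transfer of the one-dimensional $\pi_p$, i.e. an unramified twist of Steinberg, so of conductor exactly $p$; and $\Pi_q\cong\pi_q$ spherical with the same Satake parameters for $q\neq p$. By the newform dictionary of Atkin--Lehner--Li this family of $\Pi$ is precisely $S_{k}^{\text{new}}(\Gamma_0(p))$, each newform spanning a one-dimensional new line. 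Composing the two identifications yields a linear isomorphism $\mathcal{A}(D^{\times},U,V_{k-2})\cong S_{k}^{\text{new}}(\Gamma_0(p))$, and Hecke equivariance for the operators $T_q$ with $q\nmid p$ is automatic: these are supported at places where $D$ splits, so the local Hecke algebras $\mathcal{H}(\mathrm{GL}_2(\mathbb{Q}_q)/\!\!/\mathrm{GL}_2(\mathbb{Z}_q))$ literally coincide and act by the same eigenvalues on both sides.

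For $k=2$ the archimedean weight $V_0$ is trivial, so the globally one-dimensional representations $\chi\circ\mathrm{Nrd}$ (with $\chi$ unramified, forced by triviality of the central character) now intervene; these are exactly the constant functions in $\mathcal{A}(D^{\times},U,V_0)$, and they match Eisenstein rather than cuspidal data on the $\mathrm{GL}_2$-side, which is why one quotients out the line of constants, the complement still matching $S_{2}^{\text{new}}(\Gamma_0(p))$ by the identical argument. The step I expect to be the main obstacle is the archimedean bookkeeping together with the global multiplicity-one input: one must verify that $V_{k-2}$ really is the weight matching the weight-$k$ holomorphic discrete series under the archimedean Jacquet--Langlands map, and that strong multiplicity one (for both $D^{\times}$ and $\mathrm{GL}_2$) promotes the resulting bijection of Hecke eigensystems to an honest isomorphism of Hecke modules. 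As an alternative one can avoid Jacquet--Langlands and reproduce Eichler's original argument: compute $\dim\mathcal{A}(D^{\times},U,V_{k-2})$ from Corollary~\ref{I} (equivalently, the Eichler mass formula) and match it, and all Hecke traces, against the Eichler--Selberg trace formula for $S_{k}^{\text{new}}(\Gamma_0(p))$; this route is more computational but entirely elementary.
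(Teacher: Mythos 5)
The paper does not prove this statement: it is quoted as Eichler's classical correspondence, with the transfer of Hecke operators described afterwards but no argument given. So there is nothing internal to compare against, and your proposal has to be judged on its own terms. On those terms it is correct and is the standard modern proof. The chain you describe --- decompose $\mathcal{A}(D^{\times},U,V_{k-2})$ into $\bigoplus_{\pi}\pi_f^{U}$ over automorphic $\pi$ with $\pi_{\infty}\cong V_{k-2}$, observe that $U_p=\mathcal{O}_p^{\times}$ forces $\pi_p$ to be an unramified character of $D_p^{\times}$ (hence an unramified twist of Steinberg, conductor exactly $p$, on the $\mathrm{GL}_2$ side) while $U_q=\mathrm{GL}_2(\mathbb{Z}_q)$ forces $\pi_q$ spherical, and then invoke Jacquet--Langlands plus the Atkin--Lehner--Li dictionary to land exactly on $S_k^{\text{new}}(\Gamma_0(p))$ --- is the argument one finds in modern treatments, and your handling of $k=2$ (the one-dimensional representations $\chi\circ\mathrm{Nrd}$ contributing precisely the constants) is right. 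Eichler's own proof, by contrast, was the ``alternative'' you mention: comparison of Brandt matrices with Hecke operators via theta series and the trace formula, which is elementary but gives less structural insight. Two small points deserve explicit care in a full write-up: the identification of the archimedean parameter (that $\mathrm{Symm}^{k-2}$ of $\mathbb{H}^{\times}/\mathbb{R}^{\times}$ matches the weight-$k$ holomorphic discrete series, and that in this paper's normalisation $k$ is even so that $V_{k-2}$ descends to $\mathrm{SU}(2)/\{\pm I\}$), and the multiplicity-one statements on both sides that upgrade a bijection of eigensystems to an isomorphism of Hecke modules; you flag both, so I see no gap. A cosmetic slip: the $\Gamma_m$ are unit groups of the orders $D\cap z_m\hat{\mathcal{O}}z_m^{-1}$, not all equal to $\mathcal{O}^{\times}$, but this does not affect the argument.
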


It remains to describe how the Hecke operators transfer over the isomorphism. Fix a prime $q\neq p$. Choose $u\in D_{\mathbb{A}_f}^{\times}$ such that $\psi(u_q) = \text{diag}(1,p)$ and is the identity at all other places. The corresponding Hecke operator $T_{u,q}$ corresponds to the classical $T_q$ operator under Eichler's correspondence.

\subsection{Ibukiyama's correspondence}

Ibukiyama's correspondence is a (conjectural) generalisation of Eichler's correspondence to Siegel modular forms. The details can be found in \cite{ibuk1} but we explain the main ideas.

Given the setup in the previous subsection, consider the unitary similitude group: \[\text{GU}_{n}(D) = \{g \in \text{M}_n(D)\,|\,g\bar{g}^{T} = \mu(g) I, \, \mu(g)\in\mathbb{Q}^{\times}\}.\] Here $\bar{g}$ means componentwise application of the standard involution of $D$. This group is the similitude group of the standard Hermitian form on $D^n$.

\begin{thm}\label{L}
For any field $K$ there exists a similitude-preserving isomorphism $\text{GU}_2(M_2(K)) \cong \text{GSp}_4(K)$.
\end{thm}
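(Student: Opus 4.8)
The plan is to exhibit the isomorphism concretely by choosing a suitable model for the standard involution on $M_2(K)$ and then matching the defining Hermitian condition on $\mathrm{GU}_2(M_2(K))$ with the symplectic similitude condition on $\mathrm{GSp}_4(K)$. First I would observe that the standard involution $x\mapsto\bar x$ on the split quaternion algebra $M_2(K)$ is the adjugate map, $\bar x = \mathrm{adj}(x) = \begin{pmatrix} d & -b \\ -c & a\end{pmatrix}$ for $x = \begin{pmatrix} a & b \\ c & d\end{pmatrix}$, which satisfies $x\bar x = \bar x x = \det(x) I_2$ and $\overline{xy} = \bar y\,\bar x$. An element $g\in M_2(M_2(K)) = M_4(K)$ lies in $\mathrm{GU}_2(M_2(K))$ precisely when $g\bar g^{T} = \mu(g) I_2$ (an identity of $2\times 2$ matrices over $M_2(K)$) for some $\mu(g)\in K^\times$, where the transpose-bar is the conjugate-transpose for the $2\times 2$ matrix-of-quaternions structure.

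Next I would make the identification $M_2(M_2(K)) \cong M_4(K)$ explicit and track what the involution $\bar{(\cdot)}^{T}$ becomes under it. The key computation is that, writing $g$ in $2\times 2$ block form with blocks $g_{ij}\in M_2(K)$, the map $g \mapsto \bar g^{T}$ sends the block $g_{ij}$ to $\mathrm{adj}(g_{ji})$. One checks that there is a fixed matrix $J\in\mathrm{GL}_4(K)$ — built from the antidiagonal $\begin{pmatrix} 0 & 1 \\ -1 & 0\end{pmatrix}$ in each block slot, i.e. $J = \mathrm{antidiag}(w,w)$-type with $w = \begin{pmatrix}0&1\\-1&0\end{pmatrix}$ — such that $\mathrm{adj}(y) = w^{-1} y^{T} w$ for all $y\in M_2(K)$, and hence $\bar g^{T} = J^{-1} g^{T} J$ as $4\times 4$ matrices (after permuting coordinates to pass from block-transpose to ordinary transpose). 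Feeding this into the relation $g\bar g^{T} = \mu(g)I$ gives $g J^{-1} g^{T} J = \mu(g) I$, i.e. $g^{T} J g = \mu(g) J$ up to rearrangement, which is exactly the condition defining $\mathrm{GSp}_4(K)$ for the alternating form with Gram matrix $J$ (one verifies $J$ is skew-symmetric and nondegenerate). Since all $2n$-dimensional nondegenerate alternating forms over a field are equivalent, conjugating $J$ into the standard symplectic form yields the stated isomorphism, and $\mu$ on the left maps to the symplitude character on the right by construction.

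I would then check the remaining routine points: that the map $g\mapsto g$ (under the fixed coordinate identification, post-composed with the change of basis) is a group homomorphism — bijectivity is clear since it is the restriction of a linear isomorphism $M_4(K)\cong M_4(K)$, and it is multiplicative because matrix multiplication is respected — and that $\mu$ is preserved, which follows from comparing the two similitude relations term by term. Functoriality in $K$ is immediate because every matrix in sight has entries that are universal polynomial expressions, so the same formulas define the isomorphism over any field (indeed any commutative ring), giving the "for any field $K$" uniformity.

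The main obstacle is bookkeeping rather than conceptual: one must pin down a single explicit identification $M_2(M_2(K))\cong M_4(K)$ and the associated coordinate permutation, and then verify carefully that under it the conjugate-transpose-with-adjugate involution becomes $g\mapsto J^{-1}g^{T}J$ for an honestly skew-symmetric $J$ — it is easy to be off by a sign or a transpose and land on an orthogonal rather than symplectic group. I expect the cleanest route is to fix the basis $e_1,e_2$ of the first $D$-factor and $e_1',e_2'$ of the second, order the basis of $K^4$ as $e_1 e_1', e_1 e_2', e_2 e_1', e_2 e_2'$ (or whichever ordering makes block-transpose coincide with ordinary transpose), compute $J$ once, confirm $J^{T} = -J$ and $\det J\neq 0$, and then the symplectic identity drops out of a two-line manipulation of $g\bar g^{T} = \mu(g) I$.
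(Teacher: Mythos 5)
Your proof is correct and is essentially the paper's argument with the verification written out: the paper's one\--line proof simply conjugates by $M=\mathrm{diag}(1,A,1)$ with $A=\left(\begin{smallmatrix}0&1\\1&0\end{smallmatrix}\right)$, which is exactly the coordinate permutation carrying your skew form $J$ (which, for the natural block identification, comes out as the block\--diagonal $\mathrm{diag}(w,w)$ rather than the antidiagonal arrangement you tentatively wrote) to the standard symplectic form. The one imprecision about the shape of $J$ is harmless, since you correctly reduce to the equivalence of all nondegenerate alternating forms in dimension $4$.
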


\begin{proof}
Conjugation by the matrix $M = \text{diag}(1,A,1)\in M_4(K)$, where $A = \left(\begin{array}{cc}0 & 1\\1 & 0\end{array}\right)$ gives such an isomorphism.
\end{proof}

One consequence of this is that the group GU$_2(D)$ behaves like GSp$_4$ locally away from the ramified prime. It is indeed true that these groups are also inner forms of each other.

A simple argument also shows that $\text{GU}_2(\mathbb{H})/Z(\text{GU}_2(\mathbb{H})) \cong \text{USp}(4)/\{\pm I\}$. Thus GU$_2(D_{\infty})$ is compact modulo center and connected. Thus we may consider algebaric modular forms for this group. Once again we are guaranteed that the $\Gamma_m$ groups are finite by the following.

\begin{lem}\label{YUN}
GU$_2(\mathcal{O})= \{\gamma\in\text{GU}_2(D)\cap \text{M}_2(\mathcal{O})\,|\,\mu(\gamma)\in\mathbb{Z}^{\times}\}$ is finite.
\end{lem}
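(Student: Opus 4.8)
The plan is to exhibit $\text{GU}_2(\mathcal{O})$ as a set of lattice points lying on finitely many ``spheres'' for a positive definite quadratic form, which makes finiteness transparent. The crucial input is that $D$ is a \emph{definite} quaternion algebra: the reduced norm form $\text{Nrd}$ on $D_\infty = \mathbb{H}$ is then positive definite, its restriction to the maximal order $\mathcal{O}$ takes values in $\mathbb{Z}_{\geq 0}$ (vanishing only at $0$), and for each $n\geq 0$ the fibre $\{x\in\mathcal{O}\,:\,\text{Nrd}(x)=n\}$ is finite, being the intersection of the rank $4$ lattice $\mathcal{O}\subset\mathbb{H}$ with a bounded ellipsoid.

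First I would unpack the defining relation. Write a typical element as $\gamma = \begin{pmatrix} a & b \\ c & d\end{pmatrix}$ with $a,b,c,d\in\mathcal{O}$, subject to $\gamma\bar{\gamma}^T = \mu(\gamma)I$ and $\mu(\gamma)\in\mathbb{Z}^{\times}$. Reading off the two diagonal entries gives $\text{Nrd}(a)+\text{Nrd}(b) = \mu(\gamma)$ and $\text{Nrd}(c)+\text{Nrd}(d) = \mu(\gamma)$. Since each reduced norm is a non-negative integer, $\mu(\gamma)$ is a non-negative integer; being a unit in $\mathbb{Z}$, it must equal $1$. Hence $\text{Nrd}(a)+\text{Nrd}(b) = \text{Nrd}(c)+\text{Nrd}(d) = 1$ with all four reduced norms in $\mathbb{Z}_{\geq 0}$, so every entry of $\gamma$ has reduced norm $0$ or $1$. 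By the lattice-point remark there are only finitely many elements of $\mathcal{O}$ of reduced norm $0$ or $1$, so only finitely many possibilities for each entry, and therefore $\text{GU}_2(\mathcal{O})$ is finite.

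I do not anticipate a genuine obstacle here; the one point that needs a moment's care is that $\text{GU}_2(\mathbb{H})$ is compact only \emph{modulo its centre}, so one cannot simply invoke ``a discrete subgroup of a compact group is finite'' — the similitude (scaling) direction is non-compact, and it is precisely the diagonal entries of $\gamma\bar\gamma^T=\mu(\gamma)I$ that pin $\mu(\gamma)$ down to $1$ and thereby confine $\gamma$ to a compact region. Alternatively one could appeal to Gross' general criterion, namely that the integral points of a reductive $\mathbb{Q}$-group with compact-modulo-centre real points form a finite group (the same input behind Proposition \ref{ABC}); but the direct counting argument above is elementary and self-contained, and it is what I would write down.
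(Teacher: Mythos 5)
Your proof is correct and takes essentially the same approach as the paper: both unpack the relation $\gamma\bar{\gamma}^T=\mu(\gamma)I$ entrywise and use positive-definiteness of the reduced norm on the definite order $\mathcal{O}$ to force $\mu(\gamma)=1$ and confine the entries to finitely many norm fibres. The paper simply carries the solution of the equations further, to the explicit description of $\text{GU}_2(\mathcal{O})$ as the diagonal and antidiagonal matrices with entries in $\mathcal{O}^{\times}$ (which it needs later, e.g.\ for $|\Gamma^{(1)}|=2|\mathcal{O}^{\times}|^2$), whereas you stop at finiteness.
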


\begin{proof}
Solving the equations gives: \[\text{GU}_2(\mathcal{O}) = \left\{\left(\begin{array}{cc}\alpha & 0\\ 0 & \beta\end{array}\right), \left(\begin{array}{cc} 0 & \alpha\\ \beta & 0\end{array}\right)\,\Bigg|\,\alpha,\beta\in\mathcal{O}^{\times}\right\}.\]
\end{proof}

One consequence of Theorem \ref{L} is that GU$_2(D_q) \cong \text{GSp}_4(\mathbb{Q}_q)$for all $q\neq p$.

\begin{prop}\label{P}
For any $q\neq p$ there exists a similitude-preserving isomorphism $\psi: \text{GU}_{2}(D_q) \rightarrow \text{GSp}_4(\mathbb{Q}_q)$ that preserves integrality: \[\psi(\text{GU}_2(D_q)\cap M_2(\mathcal{O}_q)) = \text{GSp}_4(\mathbb{Q}_q)\cap M_4(\mathbb{Z}_q).\]
\end{prop}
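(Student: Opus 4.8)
The plan is to build on Theorem~\ref{L} but to track integrality carefully. Since $q \neq p$, we have $D_q \cong M_2(\mathbb{Q}_q)$; fix such an isomorphism and, after conjugating, arrange that the maximal order $\mathcal{O}_q$ maps to $M_2(\mathbb{Z}_q)$ (all maximal orders in $M_2(\mathbb{Q}_q)$ are conjugate). Under this identification the standard involution on $D_q$ becomes the adjugate involution $x \mapsto \widetilde{x} = \mathrm{tr}(x)I - x$ on $M_2(\mathbb{Q}_q)$, which visibly preserves $M_2(\mathbb{Z}_q)$. Hence $\mathrm{GU}_2(D_q)$ is identified with $\mathrm{GU}_2(M_2(\mathbb{Q}_q))$ in such a way that $\mathrm{GU}_2(D_q) \cap M_2(\mathcal{O}_q)$ goes to $\mathrm{GU}_2(M_2(\mathbb{Q}_q)) \cap M_4(\mathbb{Z}_q)$ (an element of $M_2(M_2(\mathbb{Z}_q))$ is the same thing as an element of $M_4(\mathbb{Z}_q)$, once we fix the block decomposition).

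Next I would compose with the isomorphism of Theorem~\ref{L}, namely conjugation by $M = \mathrm{diag}(1, A, 1)$ with $A = \left(\begin{smallmatrix} 0 & 1 \\ 1 & 0 \end{smallmatrix}\right)$. The key point is that $M \in \mathrm{GL}_4(\mathbb{Z}_q)$ and $M^{-1} = M^{T}$ also lies in $\mathrm{GL}_4(\mathbb{Z}_q)$ (indeed $M$ is a permutation-type matrix, $M^2 = I$). Therefore conjugation by $M$ restricts to an automorphism of $M_4(\mathbb{Z}_q)$, and it carries $\mathrm{GU}_2(M_2(\mathbb{Q}_q)) \cap M_4(\mathbb{Z}_q)$ onto $\mathrm{GSp}_4(\mathbb{Q}_q) \cap M_4(\mathbb{Z}_q)$. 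Composing the two maps gives the desired $\psi$, which is similitude-preserving because each factor is, and which satisfies $\psi(\mathrm{GU}_2(D_q) \cap M_2(\mathcal{O}_q)) = \mathrm{GSp}_4(\mathbb{Q}_q) \cap M_4(\mathbb{Z}_q)$.

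The only genuinely delicate step is the first one: one must check that the conjugation inside $M_2(\mathbb{Q}_q)$ that moves $\mathcal{O}_q$ to $M_2(\mathbb{Z}_q)$ is compatible with the Hermitian structure, i.e.\ that after this conjugation the involution on $D_q$ really is the adjugate involution (up to an inner twist that itself preserves $M_2(\mathbb{Z}_q)$). This is where one uses that the standard involution on a quaternion algebra is canonical (it is $x \mapsto \mathrm{tr}(x) - x$ intrinsically), so it is automatically intertwined by any algebra isomorphism; hence no extra adjustment is needed and the argument closes. The remaining verifications---that the adjugate involution preserves $M_2(\mathbb{Z}_q)$, that $\mathrm{GU}$ of an integral Hermitian form consists of integral matrices with unit similitude, and that conjugation by $M$ is integral---are all routine.
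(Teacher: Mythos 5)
Your argument is correct and is essentially the paper's own proof: the paper simply says to ``choose an isomorphism of quaternion algebras $D_q \cong M_2(\mathbb{Q}_q)$ that preserves the norm, trace and integrality'' and then compose with the isomorphism of Theorem~\ref{L}. You have merely filled in the details the paper leaves implicit --- that all maximal orders of $M_2(\mathbb{Q}_q)$ are conjugate, that the standard involution is intrinsic and becomes the adjugate, and that conjugation by $M \in \mathrm{GL}_4(\mathbb{Z}_q)$ preserves $M_4(\mathbb{Z}_q)$ --- all of which are correct.
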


\begin{proof}
Choose an isomorphism of quaternion algebras $D_q \cong M_2(\mathbb{Q}_q)$ that preserves the norm, trace and integrality. This induces an isomorphism with the required properties: \[\text{GU}_2(D_q) \cong \text{GU}_2(M_2(\mathbb{Q}_q)) \cong \text{GSp}_4(\mathbb{Q}_q).\]\end{proof}

Let $V_{j,k-3}$ be the irreducible representation of USp$(4)$ with Young diagram parameters $(j+k-3,k-3)$. This gives a well defined representation of USp$(4)/\{\pm I\}$ if and only if $j$ is even. Thus GU$_2(D)$ acts on this via: \[ \text{GU}_2(D) \hookrightarrow \text{GU}_2(\mathbb{H}) \longrightarrow \text{GU}_2(\mathbb{H})/Z(\text{GU}_2(\mathbb{H})) \cong \text{USp}(4)/\{\pm I\}.\]

The groups $\text{GU}(D)$ and $\text{GSp}_4$ are inner forms. Thus (as with Eichler) one expects a transfer of automorphic forms. The following is found in Ibukiyama's paper \cite{ibuk1}.

\begin{conj}(Ibukiyama)
Let $j\geq 0$ be an even integer and $k\geq 3$. Supppose $(j,k) \neq (0,3)$. Then there there is a Hecke equivariant isomorphism:
\[S_{j,k}^{\text{new}}(\Gamma_0(p)) \longrightarrow \mathcal{A}^{\text{new}}(GU_2(D),U_1,V_{j,k-3})\] \[S_{j,k}^{\text{new}}(K(p)) \longrightarrow \mathcal{A}^{\text{new}}(\text{GU}_2(D),U_2, V_{j,k-3}),\] where $U_1, U_2, V_{j,k-3}$ are to be defined. 

If $(j,k) = (0,3)$ then we also get an isomorphism after taking the quotient by the constant functions on the right.
\end{conj}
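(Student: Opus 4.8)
The plan is to realize both sides as subspaces of spaces of automorphic forms and to deduce the isomorphism from a Jacquet--Langlands style transfer between the inner forms $\mathrm{GSp}_4$ and $\mathrm{GU}_2(D)$, in the style of a comparison of (stabilised) trace formulae à la Arthur. First I would fix the archimedean picture. Since $\mathrm{GU}_2(D_\infty) = \mathrm{GU}_2(\mathbb{H})$ is connected and compact modulo centre, any automorphic representation contributing to $\mathcal{A}(\mathrm{GU}_2(D),U,V_{j,k-3})$ has archimedean component the finite-dimensional representation $V_{j,k-3}$ of $\mathrm{USp}(4)/\{\pm I\}$, whereas on the $\mathrm{GSp}_4$ side the matching member of the corresponding discrete series $L$-packet is the holomorphic (vector-valued) discrete series whose infinitesimal character agrees with that of $V_{j,k-3}$; this is exactly the archimedean type of an element of $S_{j,k}(K(p))$. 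This explains the hypotheses: $j$ even is needed so that $V_{j,k-3}$ descends to $\mathrm{USp}(4)/\{\pm I\}$ and hence is a genuine representation of $\mathrm{GU}_2(D)$, while $k\geq 3$ and $(j,k)\neq(0,3)$ ensure the relevant discrete series exists and is not a degenerate limit, the case $(0,3)$ being the one where a one-dimensional space of constants must be added, exactly as in Eichler's theorem.

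Next I would pin down the level groups. By Proposition~\ref{P}, for each $q\neq p$ a chosen isomorphism $\psi$ identifies $\mathrm{GU}_2(D_q)$ with $\mathrm{GSp}_4(\mathbb{Q}_q)$ and carries $\mathrm{GU}_2(D_q)\cap M_2(\mathcal{O}_q)$ to the hyperspecial $\mathrm{GSp}_4(\mathbb{Z}_q)$; so I would define $U_1$ and $U_2$ to be hyperspecial maximal compact away from $p$ and, at $p$ (where $D_p$ is the division algebra), the two parahoric-type subgroups of $\mathrm{GU}_2(D_p)$ whose transfers on the $\mathrm{GSp}_4(\mathbb{Q}_p)$ side are, respectively, the Siegel-congruence (level $\Gamma_0(p)$) subgroup and the paramodular group $K(p)$. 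With these choices the unramified Hecke algebras at all $q\neq p$ are identified by $\psi$, so any transfer of automorphic representations is automatically equivariant for the operators attached to such $q$; the ``$\mathrm{new}$'' superscripts are then imposed on both sides to remove the oldform contributions coming from lower level at $p$, matching the classical notion of a newform.

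With the archimedean and level data matched, the core of the argument is a trace-formula comparison: for every test function unramified outside $p$ and of the prescribed parahoric type at $p$, one shows that the discrete part of the trace formula for $\mathrm{GU}_2(D)$ equals the corresponding contribution to the trace formula for $\mathrm{GSp}_4$ after endoscopic stabilisation, invoking Arthur's classification of the discrete automorphic spectrum of $\mathrm{GSp}_4$ and its inner forms. Isolating the holomorphic-discrete-series-at-infinity part and using multiplicity one for $\mathrm{GSp}_4$ — with the usual care about the Saito--Kurokawa and Yoshida CAP/endoscopic packets, which must be shown to match on both sides or be excluded by the ``$\mathrm{new}$'' condition — would upgrade the numerical equality of traces into an honest Hecke-equivariant isomorphism of the two spaces, and similarly for the $(\Gamma_0(p),U_1)$ pair.

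The main obstacle is the local analysis at the ramified prime $p$: one needs a precise local Jacquet--Langlands correspondence between representations of $\mathrm{GSp}_4(\mathbb{Q}_p)$ and of the inner form $\mathrm{GU}_2(D_p)$, together with an exact matching of the dimensions of parahoric-fixed vectors (Siegel-congruence type, and paramodular type) with the dimensions of fixed vectors for the corresponding compact subgroups of $\mathrm{GU}_2(D_p)$. This local dictionary — especially on the paramodular side, where one would want to play the paramodular newform theory of Roberts--Schmidt on the split group against the representation theory of $\mathrm{GU}_2(D_p)$ — is not available in the required generality, which is precisely why the statement remains a conjecture; the unramified and Iwahori-spherical cases, together with the numerical evidence assembled in this paper, are what make it plausible.
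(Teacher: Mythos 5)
This statement is not proved in the paper: it is stated as a conjecture, attributed to Ibukiyama, and the paper's role is only to record it (with the relevant definitions of $U_1$, $U_2$, $V_{j,k-3}$ and the new subspace supplied afterwards) and then to test its consequences numerically. So there is no proof of the paper's to compare yours against, and your own text concedes at the end that the argument cannot currently be completed. What you have written is a programme, not a proof, and the honest thing is to present it as such.

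That said, two concrete points where your sketch diverges from (or falls short of) what the paper actually sets up. First, you propose to \emph{define} $U_1$ and $U_2$ at $p$ as ``the parahoric-type subgroups whose transfers are the Siegel-congruence subgroup and $K(p)$''; this is close to circular, since specifying such a transfer at the ramified place is exactly the missing local ingredient. The paper instead defines $U_1$ and $U_2$ concretely as stabilizers $\mathrm{Stab}_{\mathrm{GU}_2(D_{\mathbb{A}_f})}(L)$ of left $\mathcal{O}$-lattices $L$ in the principal and non-principal genus respectively, using Shimura's local classification (Theorem 4.5 and Theorem \ref{AQQ}); any serious attack on the conjecture has to start from that lattice-theoretic description and prove, at $p$, a matching of fixed-vector dimensions between these two compact subgroups of $\mathrm{GU}_2(D_p)$ and the Siegel/paramodular subgroups of $\mathrm{GSp}_4(\mathbb{Q}_p)$ representation by representation. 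Second, your description of the ``new'' subspace as ``removing oldform contributions coming from lower level at $p$'' does not match the paper: on the compact side there is no lower level in the classical sense, and the old subspace is defined via the nonvanishing of Ibukiyama's theta lift $\theta_{F_1\otimes F_2}$ attached to pairs of eigenforms on $D^\times\times\mathrm{GU}_2(D)$, with the new space its orthogonal complement for the inner product of Proposition \ref{AZA}. Any trace-formula comparison would have to show that this theta-theoretic old space matches the Saito--Kurokawa/Yoshida-type and genuinely-old contributions on the $\mathrm{GSp}_4$ side --- which is precisely the part you wave at in one clause. In short: the strategy (stable trace formula plus local Jacquet--Langlands at $p$) is the expected route, but none of the load-bearing steps is carried out, and the statement remains a conjecture in the paper for exactly the reasons you identify.
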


Since our eventual goal is to study Harder's conjecture for paramodular forms we will neglect the first of these isomorphisms. However, it will turn out that the open compact subgroup $U_1$ will prove useful in later calculations.

\subsubsection{The levels $U_1$ and $U_2$.}

In Eichler's correspondence, the ``level $1$" open compact subgroup $U = \prod_q \mathcal{O}_q^{\times} \subset D_{\mathbb{A}_f}^{\times}$ can be viewed as Stab$_{D_{\mathbb{A}_f}^{\times}}(\mathcal{O})$ under an action defined by right multiplication. Similarly, one can produce open compact subgroups Stab$_{\text{GU}_2(D_{\mathbb{A}_f})}(L)\subseteq \text{GU}_2(D_{\mathbb{A}_f})$, where $L$ is a left $\mathcal{O}$-lattice of rank $2$ in $D^2$ (a free left $\mathcal{O}$-module of rank $2$).

A left $\mathcal{O}$-lattice $L\subseteq D^2$ gives rise to a left $\mathcal{O}_q$-lattice $L_q = L\otimes \mathbb{Z}_q \subseteq D_q^2$ for each prime $q$. A result of Shimura tells us the possibilities for $L_q$ (see \cite{shimura}).

\begin{thm}
Let $D$ be a quaternion algebra over $\mathbb{Q}$. If $D$ is split at $q$ then $L_q$ is right GU$_2(D_q)$ equivalent to $\mathcal{O}_q^2$. If $D$ is ramified at $q$ then there are exactly two possibilities for $L_q$, up to right GU$_2(D_q)$ equivalence (one being $\mathcal{O}_q^2$).
\end{thm}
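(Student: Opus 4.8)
The classification is entirely local: for the left $\mathcal{O}$-lattices relevant here (those whose stabilisers cut out the level subgroups $U_1,U_2$ of Ibukiyama's correspondence) the class of $L_q$ under the right action of $\text{GU}_2(D_q)$ is determined prime by prime, so it suffices to fix a prime $q$ and classify the admissible left $\mathcal{O}_q$-lattices in $D_q^2$, with its standard Hermitian form $h$, up to right $\text{GU}_2(D_q)$-equivalence. I would split into the unramified and ramified cases.

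If $D$ is split at $q$, I would transport the problem through Theorem~\ref{L} and Proposition~\ref{P}. Choosing an isomorphism $D_q\cong M_2(\mathbb{Q}_q)$ carrying $\mathcal{O}_q$ to $M_2(\mathbb{Z}_q)$, Theorem~\ref{L} over $\mathbb{Q}_q$ turns $(D_q^2,h)$ into a non-degenerate $4$-dimensional symplectic space over $\mathbb{Q}_q$ and $\text{GU}_2(D_q)$ into $\text{GSp}_4(\mathbb{Q}_q)$; since the identification is implemented by conjugation by a matrix in $\text{GL}_4(\mathbb{Z}_q)$, it carries $\mathcal{O}_q^2$ to a self-dual lattice and left $\mathcal{O}_q$-lattices to $\mathbb{Z}_q$-lattices. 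The split case then reduces to the classical fact that a self-dual lattice in such a space is unique up to $\text{Sp}_4(\mathbb{Q}_q)$, \emph{a fortiori} up to $\text{GSp}_4(\mathbb{Q}_q)$: this is the existence of a symplectic $\mathbb{Z}_q$-basis, i.e.\ the elementary-divisor normal form for alternating forms over the complete discrete valuation ring $\mathbb{Z}_q$. Transporting back yields $L_q$ right $\text{GU}_2(D_q)$-equivalent to $\mathcal{O}_q^2$.

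If $D$ is ramified at $q$, one must argue directly with the quaternion division algebra $D_q$ over $\mathbb{Q}_q$: it has a discrete valuation $w$ with $w(q)=2$, a uniformiser $\Pi$ (normalised so $\Pi\bar\Pi=q$), a unique maximal order $\mathcal{O}_q=\{x:w(x)\geq 0\}$ with maximal ideal $\mathfrak{P}=\Pi\mathcal{O}_q=\mathcal{O}_q\Pi$ and residue field $\mathbb{F}_{q^2}$, on which the standard involution induces the nontrivial automorphism, and the diagonal values $h(v,v)$ lie in the fixed field $\mathbb{Q}_q$. The plan has three steps. (i) \emph{Orthogonal splitting}: choose $v_1\in L_q$ with $w(h(v_1,v_1))$ minimal; reducing a suitable rescaling of $h$ modulo $\mathfrak{P}$ to a non-degenerate form over $\mathbb{F}_{q^2}$, a lifting argument peels off the rank-$1$ summand $\mathcal{O}_q v_1$, and iterating on the rank-$1$ complement gives $L_q\cong\langle a_1\rangle\perp\langle a_2\rangle$ with $a_i\in\mathbb{Q}_q^\times$. (ii) \emph{Normalisation}: since the reduced norm maps $\mathcal{O}_q^\times$ onto $\mathbb{Z}_q^\times$, a rank-$1$ Hermitian $\mathcal{O}_q$-lattice is $\langle q^{n}\rangle$ for a unique $n\in\mathbb{Z}$; a central similitude shifts $(n_1,n_2)$ simultaneously, and the level condition bounds $|n_1-n_2|$, leaving exactly the two normal forms $\langle 1\rangle\perp\langle 1\rangle\cong\mathcal{O}_q^2$ and $\langle 1\rangle\perp\langle q\rangle$. (iii) \emph{Distinctness}: these are not right $\text{GU}_2(D_q)$-equivalent, because the length of $L_q^*/L_q$ (dual taken with respect to $h$) is $0$ in the first case and positive in the second, and its parity is unchanged under the action of $\text{GU}_2(D_q)$ and under homothety.

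The split case is routine given Theorem~\ref{L} and Proposition~\ref{P}; the work is all in the ramified case. The main obstacle there is to carry out the orthogonal decomposition over the non-commutative local ring $\mathcal{O}_q$ — a Witt/Jordan-type argument that must treat both parities of $w(h(v,v))$ — and then to pin the number of classes at \emph{exactly} two: one has to reduce an arbitrary admissible lattice to one of the two normal forms, ruling out further invariants, and at the same time exhibit an invariant separating the two and check it is a similitude invariant. The surjectivity $\mathrm{Nrd}(\mathcal{O}_q^\times)=\mathbb{Z}_q^\times$ is the arithmetic input that prevents additional unit-class invariants from appearing. A quicker route is simply to invoke Shimura's general classification of lattices for Hermitian forms over quaternion algebras \cite{shimura} and to verify that $\mathcal{O}_q^2$ appears on the list.
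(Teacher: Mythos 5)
First, a framing point: the paper does not prove this statement at all — it is quoted from Shimura \cite{shimura} — so your closing suggestion to ``invoke Shimura's general classification'' is precisely what the paper does, and your direct sketch is an attempt to reconstruct Shimura's local argument. The outline (Morita reduction in the split case; orthogonal splitting, normalisation via $\mathrm{Nrd}(\mathcal{O}_q^{\times})=\mathbb{Z}_q^{\times}$, and a discriminant-type invariant in the ramified case) has the right shape, but there is a genuine gap, one which in fact also afflicts the theorem as literally stated: everything depends on $L$ being a \emph{maximal} lattice for the Hermitian form, a hypothesis you never pin down. In the split case you assert that the Theorem~\ref{L}/Morita dictionary carries $L_q$ to a self-dual $\mathbb{Z}_q$-lattice; it does not. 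It carries left $\mathcal{O}_q$-lattices to arbitrary $\mathbb{Z}_q$-lattices in the $4$-dimensional symplectic space, and the $\text{GSp}_4(\mathbb{Q}_q)$-orbit of such a lattice is determined by the difference of its two symplectic elementary divisors. For example the free left $\mathcal{O}$-lattice $\mathcal{O}\oplus q\mathcal{O}$ localises at a split $q$ to a lattice with elementary divisors $(1,q^2)$, which is not right $\text{GU}_2(D_q)$-equivalent to $\mathcal{O}_q^2$. Likewise, in the ramified case the forms $\langle 1\rangle\perp\langle q^{n}\rangle$, $n=0,1,2,\dots$, are pairwise inequivalent, so without the bound $|n_1-n_2|\leq 1$ there are infinitely many classes; your appeal to ``the level condition'' is doing all the work and is never defined (the theorem has no level in it). The missing hypothesis — explicit in Shimura and implicit in the paper via Theorem~\ref{AQQ}, where $g\bar{g}^{T}$ has reduced determinant a unit times $1$ or $p$ — is that $L$ is maximal: self-dual up to similitude at split places, of reduced discriminant $1$ or $q$ at the ramified place.

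Second, your separating invariant in step (iii) does not actually separate. For $\mathcal{O}_q^2$ the length of $L_q^{*}/L_q$ is $0$, while for $\langle 1\rangle\perp\langle q\rangle$ it is $2$ as an $\mathcal{O}_q$-module (since $q\mathcal{O}_q=\mathfrak{P}^2$); both are even, so ``parity of the length'' is preserved only vacuously and distinguishes nothing. What does work is the length modulo $4$ — a similitude of factor $c$ shifts it by $4v_q(c)$ — or, more cleanly, $v_q$ of the reduced determinant of the Gram matrix modulo $2$, which is $0$ versus $1$ and changes by $2v_q(\mu(g))$ under $g\in\text{GU}_2(D_q)$ and by an even amount under homothety. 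With the maximality hypothesis made explicit and this invariant corrected, your steps (i)--(iii) do assemble into Shimura's proof; as written, both the split-case reduction and the ramified-case count of ``exactly two'' are unsupported.
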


When $D$ is ramified at $\{p,\infty\}$ it is clear from this result that there are only two possibilities for $L$, up to local equivalence.

\begin{define}
Let $D$ be ramified at $p,\infty$ for some prime $p$:
\begin{itemize}
\item{If $L_p$ is locally equivalent to $\mathcal{O}_p^2$ for all $q$ then we say that $L$ lies in the principal genus.} 
\item{If $L_p$ is locally inequivalent to $\mathcal{O}_p^2$ then we say that $L$ lies in the non-principal genus.\qed}
\end{itemize}
\end{define}

Given $L$, results of Ibukiyama \cite{ibuk4} allow us to write $L = \mathcal{O}^2 g$ for some $g\in\text{GL}_2(D)$ and determine the genus of $L$ based on $g$.

\begin{thm}\label{AQQ}
\begin{itemize}
\item{$L$ lies in the principal genus if and only if $g\overline{g}^T = mx$ for some positive $m\in\mathbb{Q}$ and some $x\in\text{GL}_n(\mathcal{O})$ such that $x = \overline{x}^T$ and such that $x$ is positive definite, i.e. $yx\overline{y}^T > 0$ for all $y\in D^n$ with $y\neq 0$.}
\item{$L$ lies in the non-principal genus if and only if $g\overline{g}^T = m\left(\begin{array}{cc}ps & r\\ \overline{r} & pt\end{array}\right) $ where $m\in\mathbb{Q}$ is positive, $s,t\in\mathbb{N}, r\in \mathcal{O}$ lies in the two sided ideal of $\mathcal{O}$ above $p$ and is such that $p^2st - N(r) = p$ (so that the matrix on the right has determinant $p$).}
\end{itemize}
\end{thm}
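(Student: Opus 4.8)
The plan is to reduce Theorem~\ref{AQQ} to a local statement at the ramified prime $p$ and then match the two local classes of Shimura's theorem (recalled above, see \cite{shimura}) with the two matrix normal forms, via the standard dictionary between left $\mathcal{O}$-lattices in the Hermitian space $(D^2,h)$, with $h(x,y)=x\overline{y}^{\,T}$, and Hermitian $\mathcal{O}$-lattices. Writing $L=\mathcal{O}^2 g$ with $g\in\mathrm{GL}_2(D)$, the restriction of $h$ to $L$, expressed in the $\mathcal{O}$-basis given by the rows of $g$, is represented by $G_L:=g\overline{g}^{\,T}$, which is Hermitian ($G_L=\overline{G_L}^{\,T}$) and positive definite, since $D$ is definite and $g$ is invertible. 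Replacing $g$ by $ug$ with $u\in\mathrm{GL}_2(\mathcal{O})$ replaces $G_L$ by $uG_L\overline{u}^{\,T}$, and a right $\mathrm{GU}_2(D_q)$-translation $L\mapsto Lh$ replaces it by $\mu(h)\,uG_L\overline{u}^{\,T}$ with $\mu(h)\in\mathbb{Q}_q^{\times}$; hence the class of $G_L$ under $G\mapsto c\,uG\overline{u}^{\,T}$ is an invariant of $L$, which by Shimura's theorem is concentrated at $p$. It is convenient to record the reformulation via the dual lattice: a short computation gives $L^{\#}=\mathcal{O}^2(\overline{g}^{\,T})^{-1}$, so ``$G_L$ equals a positive rational times an element of $\mathrm{GL}_2(\mathcal{O})$'' is the same as ``$L=mL^{\#}$ for some $m\in\mathbb{Q}_{>0}$'', i.e.\ $L$ is a modular lattice.

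For the principal case both directions follow from modularity. If $g\overline{g}^{\,T}=mx$ with $x=\overline{x}^{\,T}\in\mathrm{GL}_2(\mathcal{O})$ positive definite, then at $p$ the lattice $L_p$ is unimodular up to the scalar $m$; by the uniqueness of unimodular Hermitian lattices of a given rank over the maximal order $\mathcal{O}_p$ of the ramified local division algebra --- together with surjectivity of the reduced norm $D_p^{\times}\to\mathbb{Q}_p^{\times}$, which makes the scalar harmless --- $L_p$ is $\mathrm{GU}_2(D_p)$-equivalent to $\mathcal{O}_p^2$, so $L$ lies in the principal genus. Conversely, if $L$ is principal then $L_q$ is modular for every $q$: automatically for $q\neq p$ by Shimura, and at $p$ because membership of $L_p$ in the $\mathrm{GU}_2(D_p)$-orbit of the unimodular lattice $\mathcal{O}_p^2$ forces $L_p^{\#}=\mu(h)^{-1}L_p$. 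By the local--global principle for lattices one can then choose a single $m\in\mathbb{Q}_{>0}$ with $L=mL^{\#}$, which by the description of $L^{\#}$ says exactly that $m^{-1}g\overline{g}^{\,T}\in\mathrm{GL}_2(\mathcal{O})$; it is Hermitian and positive definite, giving the required $x$.

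For the non-principal case I would first pass to an integral, primitive representative: after rescaling write $g\overline{g}^{\,T}=mM$ with $M\in M_2(\mathcal{O})$ Hermitian, positive definite and primitive, and introduce the Pfaffian (quaternion) determinant $\mathrm{Qdet}$, which for a $2\times 2$ Hermitian matrix with diagonal entries $a,c$ and off-diagonal entry $b$ equals $ac-N(b)\in\mathbb{Q}$; the parity of its $p$-adic valuation is exactly the genus invariant isolated above, even for $\mathcal{O}^2$ and odd for the unique non-principal local class, represented by $\mathcal{O}_p\oplus\mathfrak{P}$ with $\mathfrak{P}$ the two-sided prime of $\mathcal{O}$ above $p$. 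Thus $L$ non-principal means $M$ is unimodular at every $q\neq p$ but non-unimodular and ``as unimodular as possible'' at $p$; the local structure theory of Hermitian lattices over $\mathcal{O}_p$ together with the explicit reduction theory of \cite{ibuk4} then brings $M$ (for the representative $g$ produced by Ibukiyama's reduction) to the stated form $\left(\begin{array}{cc}ps&r\\\overline{r}&pt\end{array}\right)$, with $s,t\in\mathbb{N}$, $r$ in $\mathfrak{P}$, and $\mathrm{Qdet}=p^2st-N(r)=p$ exactly; here one uses $\mathfrak{P}=\{\alpha\in\mathcal{O}:p\mid N(\alpha)\}$ and a short norm computation to see that such $r$ (and $s,t$) exist. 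The converse is immediate: a matrix of that shape has $\mathrm{Qdet}=m^2p$, which is never a positive rational square, so $g\overline{g}^{\,T}$ is not a rational scalar times an element of $\mathrm{GL}_2(\mathcal{O})$, hence $L$ is not modular and therefore --- there being only two genera --- lies in the non-principal one.

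The step I expect to be the main obstacle is the normalization just used in the non-principal case: showing that $g\overline{g}^{\,T}$, for the representative $g$ produced by Ibukiyama's reduction, can always be brought into the precise shape $\left(\begin{array}{cc}ps&r\\\overline{r}&pt\end{array}\right)$ with determinant exactly $p$, and not merely into some non-unimodular Hermitian form whose determinant has odd $p$-valuation. This rests on the fine local classification of Hermitian lattices over the ramified quaternion order $\mathcal{O}_p$ and on the explicit reduction theory of \cite{ibuk4}; by contrast, the reduction to a local question via Shimura's theorem, the converse implications in both cases, and the positivity and norm-existence sub-claims are comparatively routine.
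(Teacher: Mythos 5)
First, a caveat: the paper does not prove Theorem \ref{AQQ} at all --- it is imported as a result of Ibukiyama--Katsura--Oort \cite{ibuk4}, resting on Shimura's local classification \cite{shimura}, and no argument is given --- so there is no in-paper proof to compare yours against; I can only assess your outline on its own terms.

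Where it is explicit, your outline is structurally sound. The dictionary $L=\mathcal{O}^2 g\mapsto G_L=g\overline{g}^{T}$, the computation $L^{\#}=\mathcal{O}^2(\overline{g}^{T})^{-1}$, the equivalence of ``$G_L=mx$ with $x\in\text{GL}_2(\mathcal{O})$ Hermitian positive definite'' with modularity of $L$, and the remark that scalar similitudes render the rational factor harmless at $p$ are all correct; so is the use of the parity of $\text{ord}_p$ of the discriminant $ac-N(b)$ (which transforms by the reduced norm of $u\in\text{GL}_2(\mathcal{O})$, a $p$-adic unit) to separate the two local classes at $p$, and hence the ``immediate'' converse in the non-principal case. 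Two things are missing, one minor and one essential. The minor one: in the principal case your single global $m$ exists because every local valuation of the discriminant of $g\overline{g}^{T}$ is even and the number is positive, so it is the square of a positive rational; this deserves a sentence rather than an appeal to ``the local--global principle for lattices''. The essential one: the forward implication in the non-principal case --- that a non-principal $L$ admits $g$ with $m^{-1}g\overline{g}^{T}$ integral, with \emph{both} diagonal entries divisible by $p$, off-diagonal entry $r\in\mathfrak{P}$, and discriminant \emph{exactly} $p$, rather than merely some primitive Hermitian matrix whose discriminant has odd $p$-adic valuation --- is precisely the nontrivial content of the theorem, and at that point your argument reads ``by the local structure theory of Hermitian lattices over $\mathcal{O}_p$ together with the explicit reduction theory of \cite{ibuk4}'', i.e.\ it cites the very source from which the statement is being quoted. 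As written, the proposal establishes the converses and the bookkeeping but does not independently close that key step, which you do at least flag honestly as the main obstacle.
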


The lattice $\mathcal{O}^2$ is clearly in the principal genus and corresponds to the choice $g=I$. Alternatively fix a choice of $g$ such that $\mathcal{O}^2 g$ is in the non-principal genus. Let $U_1,U_2$ respectively denote the corresponding open compact subgroups of $\text{GU}_2(D_{\mathbb{A}_f})$ (as described above).

\subsubsection{Hecke operators}

The transfer of Hecke operators in Ibukiyama's correspondence is similar to the Eichler correspondence but has subtle differences. Fix a prime $q\neq p$ and let $M_q = \text{diag}(1,1,q,q)\in\text{GSp}_4(\mathbb{Q}_q)$. Fixing an isomorphism as in Proposition \ref{P} we may choose $v_q\in\text{GU}_2(D_q)$ such that $v_q \mapsto M_q$. Since $q\neq p$ we know that $\mathcal{O}_q^2 g_q = \mathcal{O}_q^2 h_q$ for some $h_q\in \text{GU}_2(D_q)$ (where $g_q$ is the image of $g$ under the natural embedding $\text{GU}_2(D) \rightarrow \text{GU}_2(D_q)$.  $u_q\in\text{GU}_2(D_q)$ given by $u_q = h_q v_q h_q^{-1}$. 

Let $u\in\text{GU}_2(D_{\mathbb{A}_f})$ have $u_q = h_q v_q h_q^{-1}$ as the component at $q$ and have identity component elsewhere.

\begin{define}\label{Q}
For the above choice of $u$, the corresponding Hecke operator on $\mathcal{A}^{\text{new}}(\text{GU}_2(D),U_2, V_{j,k-3})$ will be called $T_{u,q}$.\qed
\end{define} 

Under Ibukiyama's correspondence it is predicted that $T_{u,q}$ corresponds to the classical $T_q$ operator acting on $S_{j,k}^{\text{new}}(K(p))$).

\subsubsection{The new subspace}

Our final task in defining Ibukiyama's correspondence is to explain what is meant by the new subspace $\mathcal{A}^{\text{new}}(\text{GU}_2(D),U_2,V_{j,k-3})$. We will not go into too much detail but will refer the reader to Ibukiyama's papers \cite{ibuk3}, \cite{ibuk5}.

Let $G = D^{\times}\times\text{GU}_2(D)$. Then we have an open compact subgroup $U' = U\times U_2$ and finite dimensional representations $W_{j,k-3} := V_{j}\otimes V_{j,k-3}$ of $G(\mathbb{A}_f)$.

We start with the decomposition: \[\mathcal{A}(G,U',W_{j,k-3}) \cong \mathcal{A}(D^{\times},U,V_j)\otimes \mathcal{A}(\text{GU}_2(D),U_2,V_{j,k-3}).\] Ibukiyama takes $F\in\mathcal{A}(G,U',W_{j,k-3})$. If $F$ is an eigenform then $F = F_1\otimes F_2$ for eigenforms $F_1,F_2$. He then associates an explicit theta series $\theta_F$ to $F$. This is an elliptic modular form for $SL_2(\mathbb{Z})$ of weight $j+2k-2$ (if $j+2k-6\neq 0$ then it is a cusp form). It is known that $\theta_F$ is an eigenform for all Hecke operators if and only if $\theta_F \neq 0$.

\begin{define}
The subspace of old forms $A_{j,k-3}^{\text{old}}(D) \subseteq A_{j,k-3}(D)$ is generated by the eigenforms $F_2$ such that there exists an eigenform $F_1$ satisfying $\theta_{F_1 \otimes F_2} \neq 0$.

The subspace of new forms $A_{j,k-3}^{\text{new}}(D)$ is the orthogonal complement of the old space with respect to the inner product in Proposition \ref{AZA}.\qed
\end{define}

It should be noted that by Eichler's correspondence $F_1$ can be viewed as an elliptic modular form for $\Gamma_0(p)$ of weight $j+2$. Further it will be a new cusp form precisely when $j>0$. Thus computationally it is not difficult to find the new and old subspaces.

\section{Finding evidence for Harder's conjecture}

Now that we have linked spaces of Siegel modular forms $S_{j,k}^{\text{new}}(K(p))$ with spaces of algebraic modular forms $A_{j,k-3}^{\text{new}}(D) = \mathcal{A}^{\text{new}}(\text{GU}_2(D),U_2, V_{j,k-3})$, we can begin to generate evidence for Harder's conjecture.

\subsection{Brief plan of the strategy}

In this paper we will deal with cases where $h=1$ and dim$(A_{j,k-3}^{\text{new}}(D)) = 1$.

\textbf{Strategy}

\begin{enumerate}
\item{Find all primes $p$ such that $h=1$.}
\item{For each such $p$ calculate $\Gamma^{(2)} = \text{GU}_2(D)\cap U_2$.}
\item{Using Corollary \ref{I} find all $j,k$ such that $\text{dim}(A_{j,k}^{\text{new}}(D))=1$.}
\item{For each pair $(j,k)$ look in the space of elliptic forms $S_{j+2k-2}^{\text{new}}(\Gamma_0(p))$ for normalized eigenforms $f$ which have a ``large prime" dividing $\Lambda_{\text{alg}}(f,j+k)\in\mathbb{Q}_f$.}
\item{Find the Hecke representatives for the $T_{u,q}$ operator at a chosen prime $q$.}
\item{Use the trace formula to find tr$(T_{u,q})$ for $T_q$ acting on $A_{j,k-3}(D)$.}
\item{Subtract off the trace contribution of $T_{u,q}$ acting on $A_{j,k-3}^{\text{old}}(D)$ in order to get the trace of the action on $A_{j,k-3}^{\text{new}}(D)$. Since dim$(A_{j,k-3}^{\text{new}}(D))=1$ this trace should be exactly the Hecke eigenvalue of a new paramodular eigenform by Ibukiyama's conjecture.}
\item{Check that Harder's congruence holds.}
\end{enumerate}

The above strategy can be modified to work for the case $\text{dim}(A_{j,k-3}^{\text{new}}(D)) = d > 1$ but one must compute $\text{tr}(T_{u,q}^t)$ for $1\leq t \leq d$.

\subsection{Finding $\Gamma^{(2)}$.}

For $\theta\in\mathbb{Q}^{\times}$ consider the subset: \[\text{GU}_{n}(D)_{\theta} = \{\gamma\in\text{GU}_2(D)\,|\,\mu(\gamma) = \theta\},\] In particular let SU$_2(D) := \text{GU}_2(D)_1$.

\begin{thm}
The group $\Gamma^{(2)}$ consists of the following set of matrices: \[\Gamma^{(2)} =\text{SU}_{2}(D)\cap g^{-1}\text{GL}_2(\mathcal{O})g\]
\end{thm}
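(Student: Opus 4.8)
<br>

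The plan is to unwind the definitions of $\Gamma^{(2)} = \mathrm{GU}_2(D)\cap U_2$ locally, place by place, and show that the only nontrivial constraints come from the finite places $q\neq p$ and the place $p$ together, and that these collectively cut out exactly $\mathrm{SU}_2(D)\cap g^{-1}\mathrm{GL}_2(\mathcal{O})g$. Recall that $U_2 = \mathrm{Stab}_{\mathrm{GU}_2(D_{\mathbb{A}_f})}(L)$ for the fixed non-principal genus lattice $L = \mathcal{O}^2 g$, acting by right multiplication, so $U_2 = \prod_q \mathrm{Stab}_{\mathrm{GU}_2(D_q)}(L_q)$ where $L_q = \mathcal{O}_q^2 g_q$. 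Hence an element $\gamma \in \mathrm{GU}_2(D)$ lies in $\Gamma^{(2)}$ if and only if $L_q \gamma = L_q$ for every finite prime $q$, i.e. $\mathcal{O}_q^2 g_q \gamma = \mathcal{O}_q^2 g_q$, equivalently $g_q \gamma g_q^{-1} \in \mathrm{GL}_2(\mathcal{O}_q)$ for every finite $q$. (Here one uses that $\mathrm{Stab}_{\mathrm{GL}_2(D_q)}(\mathcal{O}_q^2)$ under right multiplication is $\mathrm{GL}_2(\mathcal{O}_q)$.)

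First I would make precise the identification $U_2 = \prod_q \mathrm{Stab}(L_q)$: a global element stabilises $L$ iff it stabilises every $L_q$, and for all but finitely many $q$ the element is automatically integral, so the restricted product makes sense. Next I would verify the local reduction $\mathrm{Stab}_{\mathrm{GU}_2(D_q)}(L_q) = \{\gamma \in \mathrm{GU}_2(D_q) : g_q \gamma g_q^{-1} \in \mathrm{GL}_2(\mathcal{O}_q)\}$; this is immediate from $L_q = \mathcal{O}_q^2 g_q$ and the fact that $\mathcal{O}_q^2 x = \mathcal{O}_q^2$ iff $x \in \mathrm{GL}_2(\mathcal{O}_q)$ (valid since $\mathcal{O}_q$ is a maximal order, hence $M_2(\mathbb{Z}_q)$ up to conjugacy when $q\neq p$, and the unique maximal order in the division algebra when $q=p$; in both cases $\mathcal{O}_q^2$ has stabiliser $\mathrm{GL}_2(\mathcal{O}_q)$). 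Intersecting over all finite $q$ gives $\gamma \in \mathrm{GU}_2(D)$ with $g_q\gamma g_q^{-1}\in \mathrm{GL}_2(\mathcal{O}_q)$ for all $q$; since $g\gamma g^{-1} \in \mathrm{GL}_2(D)$ is a global element that is locally integral everywhere, it lies in $\mathrm{GL}_2(\mathcal{O})$ (using $\mathcal{O} = \bigcap_q (D\cap \mathcal{O}_q)$). Thus $\Gamma^{(2)} = \{\gamma \in \mathrm{GU}_2(D) : g\gamma g^{-1}\in \mathrm{GL}_2(\mathcal{O})\} = \mathrm{GU}_2(D)\cap g^{-1}\mathrm{GL}_2(\mathcal{O})g$.

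The remaining point is to pin down the similitude factor, i.e. to replace $\mathrm{GU}_2(D)$ by $\mathrm{SU}_2(D)$ in the final expression. If $\gamma\in\mathrm{GU}_2(D)$ with $g\gamma g^{-1}\in\mathrm{GL}_2(\mathcal{O})$, then $\det_{\mathrm{red}}(g\gamma g^{-1})\in\mathcal{O}^\times\cap\mathbb{Q} = \{\pm 1\}$ (reduced determinant/norm of a unit in $\mathrm{GL}_2(\mathcal{O})$), while the similitude factor $\mu(\gamma)\in\mathbb{Q}^\times$ satisfies $\mu(\gamma)^2 = \mathrm{Nrd}(\gamma)$ up to the conventions of $\mathrm{GU}_2$, forcing $\mu(\gamma) = \pm 1$; and $\mu(\gamma)=-1$ is excluded because at the archimedean place $D_\infty = \mathbb{H}$ and positivity of the standard Hermitian form forces $\mu > 0$ (equivalently, $\mathrm{GU}_2(\mathbb{H})$ has $\mu$ taking only positive values, as in Lemma~\ref{YUN}). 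Hence $\mu(\gamma) = 1$, i.e. $\gamma \in \mathrm{SU}_2(D)$, and conversely any $\gamma \in \mathrm{SU}_2(D)\cap g^{-1}\mathrm{GL}_2(\mathcal{O})g$ is visibly in $\Gamma^{(2)}$. The main obstacle is the bookkeeping around the similitude character — being careful that $\mu(\gamma)$ is a global rational number constrained simultaneously at $p$ (where $\mathcal{O}_p$ is the division-algebra order and local integrality of $g\gamma g^{-1}$ pins down $\mathrm{Nrd}$), at all $q\neq p$, and at $\infty$ (where positivity kills the sign) — rather than the lattice-stabiliser reduction, which is routine once the local description of $L_q$ from Shimura's theorem is invoked.
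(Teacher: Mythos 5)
Your proof is correct and follows the same route as the paper: identify $\Gamma^{(2)}=\mathrm{GU}_2(D)\cap U_2$ with the global stabiliser of the lattice $\mathcal{O}^2g$, conjugate by $g$ to land in $\mathrm{GL}_2(\mathcal{O})$, and then use the reduced-norm/positivity argument to pin the similitude to $1$. The paper compresses all of this into a one-line chain of equalities plus the remark that ``a simple calculation shows that any such matrix has similitude $1$''; your local-global bookkeeping and the explicit $\mathrm{Nrd}(\gamma)=\mu(\gamma)^2$ computation are just that calculation written out.
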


\begin{proof}
We know that: \[\Gamma^{(2)} = \text{Stab}_{\text{GU}_2(D)}(\mathcal{O}^2 g) = \text{GU}_2(D) \cap \text{Stab}_{\text{GL}_2(D)}(\mathcal{O}^2 g)\] \[=  \text{GU}_2(D) \cap g^{-1}Sg = \text{GU}_{2}(D)\cap g^{-1}\text{GL}_2(\mathcal{O})g.\] A simple calculation shows that any such matrix has similitude $1$. 
\end{proof}

Recall also the open compact subgroup $U_1 = \text{Stab}_{\text{GU}_2(\mathbb{A}_f)}(\mathcal{O}^2) \subset \text{GU}_2(D_{\mathbb{A}_f})$. This is the stabilizer of a left $\mathcal{O}$-lattice lying in the principal genus.  

In this case the analogue of the group $\Gamma^{(2)}$ is the group $\Gamma^{(1)} = \text{GU}_2(D) \cap U_1$. We can employ identical arguments to the above to show the following: 

\begin{lem}
\[\Gamma^{(1)} = \text{SU}_2(D) \cap \text{GL}_2(\mathcal{O}) = \text{GU}_2(\mathcal{O}).\]
\end{lem}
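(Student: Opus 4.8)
The plan is to mirror, step by step, the proof just given for $\Gamma^{(2)}$, specialising to the lattice $\mathcal{O}^2$ (i.e. $g = I$). First I would unwind the definition: $\Gamma^{(1)} = \text{GU}_2(D) \cap U_1$, and since $U_1 = \text{Stab}_{\text{GU}_2(\mathbb{A}_f)}(\mathcal{O}^2)$, an element of $\Gamma^{(1)}$ is a global element of $\text{GU}_2(D)$ that fixes the lattice $\mathcal{O}^2$ under right multiplication at every finite place. Globally, the stabiliser in $\text{GL}_2(D)$ of the free left $\mathcal{O}$-module $\mathcal{O}^2$ is exactly $\text{GL}_2(\mathcal{O})$ (the change-of-basis matrices for a free module of rank $2$), so intersecting with $\text{GU}_2(D)$ gives $\Gamma^{(1)} = \text{GU}_2(D) \cap \text{GL}_2(\mathcal{O})$.

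Next I would promote this to the claimed equality with $\text{SU}_2(D) \cap \text{GL}_2(\mathcal{O})$ by the same similitude computation used for $\Gamma^{(2)}$: if $\gamma \in \text{GU}_2(D) \cap M_2(\mathcal{O})$ then $\gamma \bar\gamma^T = \mu(\gamma) I$ with $\mu(\gamma) \in \mathbb{Q}^\times$, and comparing reduced norms (or taking determinants over a splitting field) forces $\mu(\gamma)$ to be a unit of $\mathbb{Z}$; positivity of $\mu$ on the definite form then pins $\mu(\gamma) = 1$, so $\gamma \in \text{SU}_2(D)$. Hence $\text{GU}_2(D) \cap \text{GL}_2(\mathcal{O}) = \text{SU}_2(D) \cap \text{GL}_2(\mathcal{O})$. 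Finally, to identify this with $\text{GU}_2(\mathcal{O})$ I would invoke Lemma \ref{YUN}: by definition $\text{GU}_2(\mathcal{O}) = \{\gamma \in \text{GU}_2(D) \cap M_2(\mathcal{O}) : \mu(\gamma) \in \mathbb{Z}^\times\}$, and the argument just given shows the constraint $\mu(\gamma) \in \mathbb{Z}^\times$ is automatic once $\gamma \in \text{GL}_2(\mathcal{O})$, while conversely any element of $\text{GU}_2(\mathcal{O})$ already lies in $\text{GL}_2(\mathcal{O})$ since its inverse $\mu(\gamma)^{-1}\bar\gamma^T$ is again integral. This closes the chain of equalities.

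The only genuinely delicate point — the "main obstacle" — is the claim that the global stabiliser of $\mathcal{O}^2$ in $\text{GL}_2(D)$ is $\text{GL}_2(\mathcal{O})$, i.e. that right-multiplying $\mathcal{O}^2$ by $\gamma \in \text{GL}_2(D)$ preserves it iff $\gamma$ and $\gamma^{-1}$ have entries in $\mathcal{O}$. One must be a little careful because $\mathcal{O}$ is noncommutative, so "$\mathcal{O}^2 \gamma \subseteq \mathcal{O}^2$" means each row of $\gamma$ lies in $\mathcal{O}^2$, which is exactly $\gamma \in M_2(\mathcal{O})$; preservation (equality, not just containment) then additionally needs $\gamma^{-1} \in M_2(\mathcal{O})$, giving $\gamma \in \text{GL}_2(\mathcal{O})$. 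Everything else is the routine similitude bookkeeping copied verbatim from the preceding proof, so I would keep that part terse and simply write "we can employ identical arguments to the above", exactly as the excerpt anticipates.
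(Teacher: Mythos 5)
Your proposal is correct and follows essentially the same route as the paper, which simply applies the stabiliser computation from the $\Gamma^{(2)}$ theorem with $g=I$ (giving $\text{GU}_2(D)\cap\text{GL}_2(\mathcal{O})$, then the similitude-$1$ observation) and cites Lemma \ref{YUN} for the identification with $\text{GU}_2(\mathcal{O})$. Your extra care about the noncommutative stabiliser of $\mathcal{O}^2$ and the two-way comparison with $\text{GU}_2(\mathcal{O})$ is a welcome, but not divergent, elaboration.
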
 

We already have an explicit description of $\Gamma^{(1)}$ (see Lemma \ref{YUN}). Computationally it is not straight forward to find the elements of $\Gamma^{(2)}$ due to the non-integrality of the entries of such matrices.

For $\theta\in\mathbb{Q}^{\times}$ consider the sets \[Y_{\theta} = \text{GU}_{2}(D)_{\theta}\cap g^{-1} M_2(\mathcal{O})^{\times}g\] and \[W_{\theta} = \{\nu\in M_2(\mathcal{O})^{\times}\,|\,\nu A \overline{\nu}^{T} = \theta A\},\] where $M_2(\mathcal{O})^{\times} = \text{GL}_2(D)\cap M_2(\mathcal{O})$ and $A = g\bar{g}^T$.

Then in particular $Y_1 = \Gamma^{(2)}$. Later the sets $Y_q$ for prime $q\neq p$ will appear when finding Hecke representatives.

\begin{prop}\label{O}
For each $\theta\in\mathbb{Q}^{\times}$ conjugation by $g$ gives a bijection: \[\Phi_{\theta}: Y_{\theta} \longrightarrow W_{\theta}.\]
\end{prop}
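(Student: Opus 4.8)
The plan is to show that conjugation by $g$ carries $Y_\theta$ into $W_\theta$, is injective (trivially, since conjugation by an invertible matrix is a bijection on $M_2(D)$), and is surjective onto $W_\theta$. So the real content is verifying that $\Phi_\theta$ maps $Y_\theta$ \emph{into} $W_\theta$ and that every element of $W_\theta$ arises this way.

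First I would take $\gamma \in Y_\theta$, so $\gamma \in \mathrm{GU}_2(D)$ with $\mu(\gamma) = \theta$, and $\gamma \in g^{-1} M_2(\mathcal{O})^\times g$. Set $\nu = g\gamma g^{-1}$; by definition $\nu \in M_2(\mathcal{O})^\times$. It remains to check $\nu A \overline{\nu}^T = \theta A$ where $A = g\overline{g}^T$. Compute directly: $\nu A \overline{\nu}^T = g\gamma g^{-1} (g\overline{g}^T) \overline{(g\gamma g^{-1})}^T = g\gamma g^{-1} g \overline{g}^T \overline{g}^{-T} \overline{\gamma}^T \overline{g}^T = g \gamma \overline{\gamma}^T \overline{g}^T$. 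Here I use that the standard involution on $D$ is an anti-automorphism, so that for matrices $\overline{(XY)}^T = \overline{Y}^T\,\overline{X}^T$, giving $\overline{(g\gamma g^{-1})}^T = \overline{g}^{-T}\,\overline{\gamma}^T\,\overline{g}^T$. Since $\gamma \in \mathrm{GU}_2(D)$ with $\mu(\gamma)=\theta$ we have $\gamma\overline{\gamma}^T = \theta I$, hence $\nu A\overline{\nu}^T = g(\theta I)\overline{g}^T = \theta g\overline{g}^T = \theta A$. Thus $\nu \in W_\theta$, and $\Phi_\theta(Y_\theta)\subseteq W_\theta$.

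For surjectivity, I would reverse the computation. Given $\nu \in W_\theta$, set $\gamma = g^{-1}\nu g$. Then $\gamma \in g^{-1}M_2(\mathcal{O})^\times g$ automatically. Running the same calculation backwards, $\gamma\overline{\gamma}^T = g^{-1}\nu g \overline{g}^T \overline{g}^{-T}\overline{\nu}^T\overline{g}^{-T}\cdots$; more cleanly, from $\nu A\overline{\nu}^T = \theta A$ with $A = g\overline{g}^T$ one gets $g^{-1}\nu g\overline{g}^T\overline{\nu}^T\overline{g}^{-T} = \theta g^{-1}(g\overline{g}^T)\overline{g}^{-T} = \theta I$, and the left side is exactly $\gamma\overline{\gamma}^T$ after regrouping using the anti-automorphism property. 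Hence $\gamma\overline{\gamma}^T = \theta I$, so $\gamma \in \mathrm{GU}_2(D)$ with $\mu(\gamma) = \theta$, i.e. $\gamma \in \mathrm{GU}_2(D)_\theta$. Combined with $\gamma \in g^{-1}M_2(\mathcal{O})^\times g$ this gives $\gamma \in Y_\theta$ and $\Phi_\theta(\gamma) = \nu$. Injectivity of $\Phi_\theta$ is immediate since $\nu \mapsto g^{-1}\nu g$ is a two-sided inverse on the ambient matrix algebra.

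The only genuinely delicate point — and the step I would be most careful about — is keeping track of transposes and bars correctly, since $D$ is noncommutative: the operation $X \mapsto \overline{X}^T$ on $M_2(D)$ is an anti-involution, so $\overline{(XY)}^T = \overline{Y}^T\overline{X}^T$, and in particular $\overline{g^{-1}}^T = (\overline{g}^T)^{-1} = \overline{g}^{-T}$, which is what makes the $g^{-1}$ and $\overline{g}^{-T}$ factors cancel against $A = g\overline{g}^T$ in the right places. Everything else is formal bookkeeping, and the fact that any element of $Y_\theta$ automatically has similitude exactly $\theta$ rather than merely lying in $\mathrm{GU}_2(D)$ is built into the definition of $Y_\theta$, so no extra argument (of the kind used for $\Gamma^{(2)}$) is needed here.
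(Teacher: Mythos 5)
Your proof is correct: the paper in fact states Proposition \ref{O} without proof, and your computation is exactly the intended (and essentially only) verification, with the key point being the anti-involution property $\overline{(XY)}^T = \overline{Y}^T\,\overline{X}^T$ on $M_2(D)$, which you handle correctly in both directions. Nothing further is needed.
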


To calculate the sets $W_{\theta}$ we diagonalize $A$. Choose a matrix $P\in\text{GL}_2(D)$ such that $PA\overline{P}^{T} = B$ where $B\in M_2(D)$ is a diagonal matrix.

\begin{prop}
For each $\theta\in\mathbb{Q}^{\times}$ conjugation by $P$ gives a bijection \[W_{\theta} \longrightarrow Z_{\theta}:=\{\eta\in P\,\text{M}_2(\mathcal{O})^{\times}\, P^{-1}\,|\,\eta B\overline{\eta}^{T} = \theta B\}.\]
\end{prop}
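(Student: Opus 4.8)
The plan is to exhibit the bijection explicitly: send $\nu\in W_\theta$ to $\eta=P\nu P^{-1}$, with inverse $\eta\mapsto P^{-1}\eta P$. This is the exact analogue of Proposition \ref{O}, where conjugation by $g$ was used; here conjugation by $P$ transports the intertwining condition for $A$ to the corresponding condition for the diagonalized form $B=PA\overline{P}^T$.

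First I would record the only structural input needed: the standard involution $x\mapsto\overline{x}$ on $D$ is an anti-automorphism, so the operation $X\mapsto\overline{X}^T$ on $M_2(D)$ is an anti-homomorphism, i.e. $\overline{XY}^T=\overline{Y}^T\,\overline{X}^T$, and applying this to $PP^{-1}=I$ gives $\overline{P^{-1}}^T=(\overline{P}^T)^{-1}$. These are precisely the manipulations already used implicitly for $\Phi_\theta$. With them in hand, well-definedness is a one-line computation: for $\nu\in W_\theta$ and $\eta=P\nu P^{-1}$ one has $\eta\in P\,M_2(\mathcal{O})^\times\,P^{-1}$ by construction, and
\[
\eta B\overline{\eta}^T=P\nu P^{-1}\cdot PA\overline{P}^T\cdot\overline{P^{-1}}^T\,\overline{\nu}^T\,\overline{P}^T=P\,(\nu A\overline{\nu}^T)\,\overline{P}^T=P(\theta A)\overline{P}^T=\theta B,
\]
so $\eta\in Z_\theta$.

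For the reverse direction, given $\eta\in Z_\theta$ the membership $\eta\in P\,M_2(\mathcal{O})^\times\,P^{-1}$ says exactly that $\nu:=P^{-1}\eta P\in M_2(\mathcal{O})^\times$, and reading the displayed computation backwards — multiply $\eta B\overline{\eta}^T=\theta B$ on the left by $P^{-1}$ and on the right by $\overline{P^{-1}}^T$, then substitute $B=PA\overline{P}^T$ — yields $\nu A\overline{\nu}^T=\theta A$, so $\nu\in W_\theta$. Since conjugation by $P$ and by $P^{-1}$ are mutually inverse bijections of $M_2(D)$, the two maps are mutually inverse, which proves the claim.

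I do not expect a genuine obstacle: the statement is a transport-of-structure argument essentially identical to Proposition \ref{O}. The only point deserving a word of care is that $X\mapsto\overline{X}^T$ reverses the order of products and commutes with inversion; once that is noted, the rest is the bracketed cancellation shown above.
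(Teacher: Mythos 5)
Your argument is correct and is exactly the routine transport-of-structure computation the paper has in mind (the paper in fact omits the proof of this proposition, and of Proposition \ref{O}, as immediate). The key points you isolate — that $X\mapsto\overline{X}^T$ is an anti-homomorphism so that $\overline{P^{-1}}^T=(\overline{P}^T)^{-1}$, and the resulting cancellation $\eta B\overline{\eta}^T=P(\nu A\overline{\nu}^T)\overline{P}^T$ — are precisely what is needed.
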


If we make an appropriate choice of $g$ and $P$ then we can diagonalize $A$ in such a way as to preserve one integral entry in $P\nu P^{-1}$.

\begin{lem}\label{T}
Suppose we can choose $\lambda,\mu\in\mathcal{O}$ such that $N(\lambda)= p-1$, $N(\mu) = p$ and $\text{tr}(r) = 0$ (where $r = \lambda\overline{\mu}$). Then \[g_{\lambda,\mu} := \left(\begin{array}{cc} 1 & \lambda\\ 0 & \mu\end{array}\right)\quad\text{and}\quad P_{\lambda,\mu} = \left(\begin{array}{cc}1 & \frac{\overline{r}}{p}\\ 0 & 1\end{array}\right)\] are valid choices for $g$ and $P$.

Further $P_{\lambda,\mu}^{-1} = \overline{P_{\lambda,\mu}}$.
\end{lem}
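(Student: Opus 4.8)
The plan is to treat this as a pure verification: the lemma makes three assertions about the explicit matrices $g_{\lambda,\mu}$ and $P_{\lambda,\mu}$ — that $g_{\lambda,\mu}$ is an admissible choice of $g$ (equivalently, that $\mathcal{O}^2 g_{\lambda,\mu}$ lies in the non-principal genus), that $P_{\lambda,\mu}$ diagonalizes $A = g_{\lambda,\mu}\overline{g_{\lambda,\mu}}^T$, and that $P_{\lambda,\mu}^{-1} = \overline{P_{\lambda,\mu}}$ — and each can be settled by a short direct matrix computation provided one is careful about the noncommutativity of $D$. Note there is nothing to prove about the \emph{existence} of such $\lambda,\mu$: the hypothesis is conditional, and whether suitable elements exist for a given $p$ is a separate case-by-case matter. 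I would also record at the outset that $g_{\lambda,\mu}\in\text{GL}_2(D)$ because it is upper triangular with $\mu\ne 0$ (as $N(\mu)=p\ne 0$), and $P_{\lambda,\mu}\in\text{GL}_2(D)$ because it is unipotent upper triangular.

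First I would compute $A := g_{\lambda,\mu}\overline{g_{\lambda,\mu}}^T$. Writing $r=\lambda\overline{\mu}$ and using $\overline{\lambda\overline{\mu}}=\mu\overline{\lambda}$ together with $N(\overline{\mu})=N(\mu)$, one gets
\[ A = \begin{pmatrix} 1+N(\lambda) & r \\ \overline{r} & N(\mu) \end{pmatrix} = \begin{pmatrix} p & r \\ \overline{r} & p \end{pmatrix} \]
from the norm hypotheses $N(\lambda)=p-1$, $N(\mu)=p$. To invoke Theorem \ref{AQQ} for the non-principal genus I must check three things: that $r\in\mathcal{O}$ (clear, since $\lambda,\mu\in\mathcal{O}$); that $r$ lies in the two-sided prime ideal above $p$, which follows from $N(r)=N(\lambda)N(\mu)=p(p-1)\equiv 0\bmod p$; and that the determinant condition $p^2 st - N(r) = p$ holds with $m=1$, $s=t=1$, which is immediate as $p^2 - p(p-1) = p$. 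Hence $A$ is of the required shape with ``determinant'' $p$, and $g_{\lambda,\mu}$ is a legitimate $g$.

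Next I would verify that $P_{\lambda,\mu}A\overline{P_{\lambda,\mu}}^T$ is diagonal, where the hypothesis $\text{tr}(r)=0$, i.e.\ $\overline{r}=-r$, is essential. The one pitfall to watch here is that for such $r$ one has $\overline{r}^{\,2}=r^2=-N(r)$, \emph{not} $N(r)$. With $P_{\lambda,\mu}=\left(\begin{smallmatrix}1 & \overline{r}/p \\ 0 & 1\end{smallmatrix}\right)$, multiplying out gives $P_{\lambda,\mu}A = \left(\begin{smallmatrix} p+\overline{r}^{\,2}/p & r+\overline{r} \\ \overline{r} & p\end{smallmatrix}\right)$, and the $(1,1)$ entry is $p-N(r)/p = p-(p-1)=1$ while $r+\overline{r}=0$; then right-multiplying by $\overline{P_{\lambda,\mu}}^T=\left(\begin{smallmatrix}1 & 0 \\ r/p & 1\end{smallmatrix}\right)$ and using $\overline{r}+r=0$ once more yields $B=\left(\begin{smallmatrix}1 & 0\\ 0 & p\end{smallmatrix}\right)$. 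Finally, since $P_{\lambda,\mu}$ is unipotent upper triangular, $P_{\lambda,\mu}^{-1}=\left(\begin{smallmatrix}1 & -\overline{r}/p \\ 0 & 1\end{smallmatrix}\right)$, whereas $\overline{P_{\lambda,\mu}}=\left(\begin{smallmatrix}1 & r/p \\ 0 & 1\end{smallmatrix}\right)$, and these coincide precisely because $\overline{r}=-r$; this also makes the surviving bottom-left entry of $P_{\lambda,\mu}\nu P_{\lambda,\mu}^{-1}$ integral for $\nu\in M_2(\mathcal{O})^{\times}$, as the preceding remark requires. The only real obstacle is bookkeeping — getting $\overline{g}^T$, the off-diagonal terms, and the sign in $r^2=-N(r)$ correct — but there is no conceptual difficulty, and the argument is uniform in $p$.
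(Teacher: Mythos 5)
Your proposal is correct and follows essentially the same route as the paper: compute $A = g\overline{g}^{T} = \left(\begin{smallmatrix} p & r \\ \overline{r} & p\end{smallmatrix}\right)$ and check the non-principal genus condition of Theorem \ref{AQQ}, then use $\operatorname{tr}(r)=0$ (equivalently $r^2=-N(r)=-p(p-1)$, which the paper obtains via Cayley--Hamilton) to show $P A\overline{P}^{T}=\operatorname{diag}(1,p)$ and $P\overline{P}=I$. The only difference is cosmetic: you spell out the verification that $r$ lies in the two-sided ideal above $p$, which the paper leaves implicit.
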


\begin{proof}
A simple calculation shows that: \[A_{\lambda,\mu} = \left(\begin{array}{cc} 1 & \lambda\\ 0 & \mu\end{array}\right)\left(\begin{array}{cc} 1 & 0\\ \overline{\lambda} & \overline{\mu}\end{array}\right) = \left(\begin{array}{cc} 1 + N(\lambda) & \lambda\overline{\mu}\\ \mu\overline{\lambda} & N(\mu)\end{array}\right) = \left(\begin{array}{cc}p & r\\ \overline{r} & p\end{array}\right),\] and also that det$(A_{\lambda,\mu}) = p^2 - N(r) = p^2 - p(p-1) = p$ as required.

To prove the second claim we note that $r^2 = -p(p-1)$ by the Cayley-Hamilton theorem (since $\text{tr}(r) = 0$ and $N(r) = p(p-1)$). Then \[P_{\lambda,\mu}A\overline{P_{\lambda,\mu}}^T = \left(\begin{array}{cc} 1 & \frac{\overline{r}}{p}\\ 0 & 1\end{array}\right)\left(\begin{array}{cc} p & r\\ \overline{r} & p\end{array}\right)\left(\begin{array}{cc} 1 & 0\\ \frac{r}{p} & 1\end{array}\right) = \left(\begin{array}{cc}p + \frac{r^2}{p} + \frac{\overline{r}}{p}(\text{tr}(r)) & \text{tr}(r)\\ \text{tr}(r) & p\end{array}\right)\] and so $P_{\lambda,\mu}A\overline{P_{\lambda,\mu}}^T =\text{diag}(1,p)$.

The final claim follows from the fact that $P_{\lambda,\mu}\overline{P_{\lambda,\mu}} = I$ (which again uses the fact that $\text{tr}(r) = 0$).
\end{proof}

It is in fact always possible to find \textbf{some} maximal order $\mathcal{O}$ of $D$ where such $\lambda,\mu$ exist. For proof of this I refer to an online discussion with John Voight \cite{url}, of which the author is grateful. We fix such a choice from now on.

\begin{cor}\label{N}
Let $\nu\in\text{M}_2(\mathcal{O})$. Then the bottom left entries of $\nu$ and $P_{\lambda,\mu}\nu\overline{P_{\lambda,\mu}}$ are equal (in particular this entry remains in $\mathcal{O}$).
\end{cor}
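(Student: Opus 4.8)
The plan is to compute the bottom-left entry of $P_{\lambda,\mu}\nu\overline{P_{\lambda,\mu}}$ directly, using the explicit shape of $P_{\lambda,\mu}$ from Lemma \ref{T}. Write $\nu = \left(\begin{smallmatrix} a & b \\ c & d \end{smallmatrix}\right)$ with $a,b,c,d \in \mathcal{O}$. Recall that $P_{\lambda,\mu} = \left(\begin{smallmatrix} 1 & \overline{r}/p \\ 0 & 1 \end{smallmatrix}\right)$, so componentwise application of the standard involution gives $\overline{P_{\lambda,\mu}} = \left(\begin{smallmatrix} 1 & r/p \\ 0 & 1 \end{smallmatrix}\right)$ (using $\overline{\overline{r}} = r$). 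Note this is consistent with $P_{\lambda,\mu}^{-1} = \overline{P_{\lambda,\mu}}$ established in Lemma \ref{T}.

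First I would multiply out $P_{\lambda,\mu}\nu = \left(\begin{smallmatrix} a + \frac{\overline{r}}{p}c & b + \frac{\overline{r}}{p}d \\ c & d \end{smallmatrix}\right)$; the key observation is that left-multiplication by $P_{\lambda,\mu}$ does not touch the bottom row, so the bottom-left entry is still $c$. Then I would right-multiply by $\overline{P_{\lambda,\mu}} = \left(\begin{smallmatrix} 1 & r/p \\ 0 & 1 \end{smallmatrix}\right)$: since the first column of $\overline{P_{\lambda,\mu}}$ is $\left(\begin{smallmatrix} 1 \\ 0 \end{smallmatrix}\right)$, the left column of the product is unchanged, so the bottom-left entry of $P_{\lambda,\mu}\nu\overline{P_{\lambda,\mu}}$ equals the bottom-left entry of $P_{\lambda,\mu}\nu$, namely $c$. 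This proves the entries agree, and since $c \in \mathcal{O}$ by hypothesis, the entry remains in $\mathcal{O}$.

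There is essentially no obstacle here: the statement is a formal consequence of the fact that $P_{\lambda,\mu}$ and $\overline{P_{\lambda,\mu}}$ are both upper-triangular with $1$'s on the diagonal, so conjugation (really the twisted conjugation $\nu \mapsto P\nu\overline{P}$) fixes the bottom-left entry of any matrix. The only point requiring a word of care is checking that the componentwise involution behaves as claimed, i.e. that $\overline{\overline{r}/p} = r/p$ — this is immediate since $p \in \mathbb{Q}$ is fixed by the involution and the involution is an anti-involution of order $2$. One could alternatively phrase the whole argument without coordinates by observing that $P_{\lambda,\mu} \in I + \mathfrak{n}$ where $\mathfrak{n}$ is the strictly upper-triangular nilpotent part, and that $\mathfrak{n} M_2(\mathcal{O}) \mathfrak{n}$, $\mathfrak{n} M_2(\mathcal{O})$, and $M_2(\mathcal{O})\mathfrak{n}$ all have zero bottom-left entry; but the direct two-line matrix computation is cleanest.
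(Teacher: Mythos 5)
Your proof is correct and matches the paper's own argument, which is exactly the same direct computation of $P_{\lambda,\mu}\nu\overline{P_{\lambda,\mu}}$ showing the bottom-left entry stays equal to $\gamma$. Your added remark that this follows formally from both factors being upper-triangular unipotent is a nice (if unnecessary) conceptual gloss on the same calculation.
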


\begin{proof}
Let $\nu = \left(\begin{array}{cc}\alpha & \beta \\ \gamma & \delta\end{array}\right)$ with $\alpha,\beta,\gamma,\delta\in\mathcal{O}$. Then a simple calculation shows that \[P_{\lambda,\mu}\nu\overline{P_{\lambda,\mu}} = \left(\begin{array}{cc} \alpha + \frac{\overline{r}\gamma}{p} & (\frac{\alpha r}{p}+\beta) + \frac{\overline{r}}{p}(\frac{\gamma r}{p} + \delta)\\ \gamma & \frac{\gamma r}{p} + \delta\end{array}\right).\]
\end{proof}

The matrix $\eta = \left(\begin{array}{cc}x & y\\ z & w\end{array}\right)\in M_2(D)$ belongs to $Z_{\theta}$ if and only if \[\eta\left(\begin{array}{cc}1 & 0\\ 0 & p\end{array}\right)\overline{\eta}^T = \theta  \left(\begin{array}{cc}1 & 0\\ 0 & p\end{array}\right)\] Equivalently \[N(x) + pN(y) = \theta\]\[N(z) + pN(w) = \theta p\]\[x\overline{z} + py\overline{w} = 0.\]

Clearly these equations can have no solutions for $\theta < 0$ and so we only consider $\theta\geq 0$. 

A quick calculation shows that $N(x) = N(w)$ and $N(z) = p^2N(y)$ (a fact we will use soon). 

\begin{cor}\label{ABCZ}
Let $\theta\geq 0$. Then $W_{\theta}$ consists of all matrices $\nu = \left(\begin{array}{cc}\alpha & \beta\\ \gamma & \delta\end{array}\right)\in M_2(\mathcal{O})^{\times}$ such that: \[pN(p\alpha + \overline{r}\gamma) + N(p(\alpha r + p\beta) + \overline{r}(\gamma r + p\delta)) = \theta p^3\]\[pN(\gamma) + N(\gamma r +p\delta) = \theta p^2\]\[p\alpha\overline{\gamma} + (\alpha r+p\beta)(\overline{\gamma r + p\delta}) = -\theta p \overline{r}.\] 
\end{cor}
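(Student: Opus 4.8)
The plan is to reduce the statement to the bijection $W_\theta \cong Z_\theta$ already established (conjugation by $P_{\lambda,\mu}$), combined with the explicit formula for $P_{\lambda,\mu}\nu\overline{P_{\lambda,\mu}}$ from Corollary~\ref{N}, and then merely expand and clear denominators. There is no conceptual content beyond those two inputs; everything is a translation exercise.

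First I would take $\nu \in M_2(\mathcal{O})^\times$ with first row $(\alpha,\beta)$ and second row $(\gamma,\delta)$. By Lemma~\ref{T} we have $P_{\lambda,\mu}^{-1} = \overline{P_{\lambda,\mu}}$, so conjugation by $P_{\lambda,\mu}$ is the map $\nu \mapsto \eta := P_{\lambda,\mu}\nu\overline{P_{\lambda,\mu}}$, and by the preceding proposition $\nu \in W_\theta$ if and only if $\eta \in Z_\theta$. Corollary~\ref{N} gives the entries of $\eta$ explicitly: its first row is $\bigl(\,(p\alpha + \overline{r}\gamma)/p,\ (p(\alpha r + p\beta) + \overline{r}(\gamma r + p\delta))/p^2\,\bigr)$ and its second row is $\bigl(\gamma,\ (\gamma r + p\delta)/p\bigr)$; call these entries $x,y,z,w$ as just before the corollary. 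Membership $\eta \in Z_\theta$ unwinds to the three equations $N(x) + pN(y) = \theta$, $N(z) + pN(w) = \theta p$ and $x\overline{z} + py\overline{w} = 0$ recorded there (the symmetric relation $z\overline{x} + pw\overline{y} = 0$ is the quaternionic conjugate of the third, hence automatic). Substituting the expressions for $x,y,z,w$ and multiplying the first equation through by $p^3$ and the second by $p^2$ produces the first two claimed identities directly.

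The one step that deserves care is the third equation. Multiplying $x\overline{z} + py\overline{w} = 0$ by $p^2$ and expanding — respecting the order of quaternion multiplication and using $\gamma\overline{\gamma} = N(\gamma)$ and $(\gamma r + p\delta)\overline{(\gamma r + p\delta)} = N(\gamma r + p\delta)$ — yields $p^2\alpha\overline{\gamma} + p(\alpha r + p\beta)\overline{(\gamma r + p\delta)} + \overline{r}\bigl(pN(\gamma) + N(\gamma r + p\delta)\bigr) = 0$. I would then feed the already-derived second identity $pN(\gamma) + N(\gamma r + p\delta) = \theta p^2$ into the bracketed term and divide by $p$, arriving at $p\alpha\overline{\gamma} + (\alpha r + p\beta)\overline{(\gamma r + p\delta)} = -\theta p\overline{r}$. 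Since each manipulation — clearing powers of $p$, and substituting one identity into another — is reversible, the three displayed equations together are equivalent to $\nu \in W_\theta$, which is the assertion; the restriction to $\theta \geq 0$ is already covered by the remark before the corollary that the norm equations admit no solutions for $\theta < 0$. The main obstacle is purely clerical — keeping denominators and non-commutative products straight — rather than any genuine difficulty.
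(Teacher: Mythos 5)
Your proposal is correct and follows exactly the route the paper intends (the paper omits an explicit proof, but the corollary is plainly meant to follow by substituting the entries of $P_{\lambda,\mu}\nu\overline{P_{\lambda,\mu}}$ from Corollary \ref{N} into the three defining equations of $Z_\theta$ and clearing denominators). Your handling of the third equation — expanding $x\overline{z}+py\overline{w}=0$, recognising $\overline{r}\bigl(pN(\gamma)+N(\gamma r+p\delta)\bigr)$ and substituting the second identity — is the one non-mechanical step, and you have it right.
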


The following algorithm allows us to comute $W_{\theta}$ for $\theta\in \mathbb{N}$. Denote by $X_i$ the subset of $\mathcal{O}$ consisting of norm $i$ elements.

\textbf{Algorithm 1}

\textbf{Step 0:} Set $j:=0$. For each integer $0\leq i\leq \theta p$, generate the norm lists $X_i, X_{p(\theta p - i)}, X_{p^2 i}$.

\textbf{Step 1:} For each pair of elements $(\gamma,\gamma')\in X_j\times X_{p(\theta p-j)}$ check whether the element $\delta := \frac{\gamma'-\gamma r}{p}\in\mathcal{O}$. 

\textbf{Step 2:} For each putative $\gamma\in X_j$ from Step $1$ find all elements $\gamma''\in X_{p(\theta p - j)}$ such that the element $\alpha := \frac{\gamma'' - \overline{r}\gamma}{p}\in\mathcal{O}$.

\textbf{Step 3:} For each putative triple $(\alpha,\gamma,\delta)$ from Step $2$ and each $\gamma'''\in X_{p^2 j}$ test whether the element $\beta:= \frac{\gamma''' - (\overline{r}(\gamma r + p\delta) + p\alpha r)}{p^2}\in\mathcal{O}$.

\textbf{Step 4:} Check that the entries of each putative tuple from Step $3$ satisfies the third equation of Corollary \ref{ABCZ}.

\textbf{Step 5:} Set $j:=j+1$ and repeat steps 1-4 until $j>\theta p$.

Of course once the elements of $W_{\theta}$ have been found it is straight forward to generate the elements of $Y_{\theta}$ by inverting the bijection $\Phi_{\theta}$ in Proposition \ref{O}.

It should be noted that if we run this algorithm for $p=2$ with the following choices \[D = \left(\frac{-1,-1}{\mathbb{Q}}\right)\]\[\mathcal{O} = \mathbb{Z}\oplus\mathbb{Z}i\oplus\mathbb{Z}j\oplus\mathbb{Z}\frac{1+i+j+k}{2}\]\[\lambda = -1\]\[\mu = i-k\]\[\theta = 1\] then we get exactly the same elements for $Y_1 = \Gamma^{(2)}$ as Ibukiyama does on p.$592$ of \cite{ibuk1}.

\subsection{Finding $h$}

We can use mass formulae to get information on class numbers $h_1$ and $h_2$ for $U_1$ and $U_2$. 

Define the mass of open compact $U\subset \text{GU}_2(D_{\mathbb{A}_f})$ as follows: \[M(U) := \sum_{m=1}^{h}\frac{1}{|\Gamma_m|},\] where $\Gamma_m = \text{GU}_2(D) \cap z_m U z_m^{-1}$ for representatives $z_1, z_2, ..., z_m\in \text{GU}_2(D_{\mathbb{A}_f})$ of $\text{GU}_2(D)\backslash \text{GU}_2(D_{\mathbb{A}_f})/ U$.

Ibukiyama provides the following formulae for $M(U_1)$ and $M(U_2)$ in \cite{ibuk1}.

\begin{thm}
If $D$ is ramified at $p$ and $\infty$ then: \[M(U_1) = \frac{(p-1)(p^2+1)}{5760},\]\[M(U_2) = \frac{p^2 - 1}{5760}.\]
\end{thm}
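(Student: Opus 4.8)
The plan is to realise each mass as a Tamagawa-measure volume and to evaluate it by the usual local--global factorisation, following Gross's formalism for algebraic modular forms together with the volume/mass formula for an inner form of $\mathrm{Sp}_4$. After fixing invariant measures, for a product open compact $U=\prod_q U_q\subset\mathrm{GU}_2(D_{\mathbb{A}_f})$ one has $M(U)=\tau\cdot\nu_\infty^{-1}\cdot\prod_q\mathrm{vol}(U_q)^{-1}$, where $\tau$ is the Tamagawa number of the relevant group, $\nu_\infty$ an archimedean volume of $\mathrm{USp}(4)/\{\pm I\}$, and the product runs over finite primes. Since $\mathrm{SU}_2(D)$ is an inner form of $\mathrm{Sp}_4=\mathrm{Spin}_5$, hence simply connected, its Tamagawa number is $1$; passing from the derived group to $\mathrm{GU}_2(D)$ only introduces the split similitude torus $\mathbb{G}_m$, whose adelic contribution is absorbed by the quotient by $G(\mathbb{R})^+$ and the norm character, so no new global constant appears. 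The key point is that the two genera differ only at the ramified prime: by Proposition~\ref{P} the isomorphism $\mathrm{GU}_2(D_q)\cong\mathrm{GSp}_4(\mathbb{Q}_q)$ carries both $U_{1,q}$ and $U_{2,q}$ to the hyperspecial $\mathrm{GSp}_4(\mathbb{Z}_q)$ for every $q\neq p$.

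\emph{Step 1: the universal constant $\tfrac1{5760}$.} Combine $\nu_\infty^{-1}$ with the prime-to-$p$ local product, which on the $\mathrm{Sp}_4$-side is $\prod_{q\neq p}\big((1-q^{-2})(1-q^{-4})\big)^{-1}$. Completed at $\infty$ this Euler product is built out of $\zeta(2)\zeta(4)$, equivalently (via the functional equation) out of $\zeta(-1)=-\tfrac1{12}$ and $\zeta(-3)=\tfrac1{120}$; collecting also the elementary $2$-power coming from the centre and the quotient by $\{\pm I\}$, one obtains exactly $\tfrac14|\zeta(-1)\zeta(-3)|=\tfrac{|B_2|}{4}\cdot\tfrac{|B_4|}{8}=\tfrac1{24}\cdot\tfrac1{240}=\tfrac1{5760}$ --- the same constant that governs the Minkowski--Siegel mass of $\mathrm{Sp}_4(\mathbb{Z})$. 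Hence $M(U_i)=\tfrac1{5760}\,\lambda_i(p)$ with $\lambda_i(p)=\mathrm{vol}(\mathrm{GSp}_4(\mathbb{Z}_p))/\mathrm{vol}(U_{i,p})$ a purely local correction at $p$.

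\emph{Step 2: the local factor at $p$.} It remains to prove $\lambda_1(p)=(p-1)(p^2+1)$ and $\lambda_2(p)=p^2-1$. I would run Bruhat--Tits theory for the non-split inner form $\mathrm{GU}_2(D_p)$ of $\mathrm{GSp}_4(\mathbb{Q}_p)$: the two $\mathrm{GU}_2(D_p)$-classes of maximal lattices (Shimura's theorem quoted above) are the two special vertices of the local building, i.e. two non-conjugate maximal parahorics, with $U_{1,p}$ stabilising $\mathcal{O}_p^2$ and $U_{2,p}$ the non-principal lattice (cf. Theorem~\ref{AQQ}). For each, compute $\mathrm{vol}(U_{i,p})$ as the local density of the corresponding smooth $\mathbb{Z}_p$-group scheme, i.e. $p^{-\dim}$ times the order of its special fibre over $\mathbb{F}_p$, and divide it into the split value $p^{-\dim}|\mathrm{GSp}_4(\mathbb{F}_p)|$; here the fact that the residue algebra of $D_p$ is $\mathbb{F}_{p^2}$ forces a unitary-type factor for $\mathbb{F}_{p^2}/\mathbb{F}_p$ into one of the special fibres, and this is the source of the asymmetric term $p^2+1$. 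Simplifying with $p^4-1=(p-1)(p+1)(p^2+1)$ yields the two polynomials; as a cross-check, for $p=2$ this returns $M(U_1)=\tfrac5{5760}$ and $M(U_2)=\tfrac3{5760}$, consistently with Ibukiyama's explicit description of $\Gamma^{(1)}$ and $\Gamma^{(2)}$.

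The main obstacle is Step 2: correctly identifying the two parahoric subgroups of the ramified quaternionic unitary group and the reductive quotients of their special fibres --- in particular pinning down the twisted factor responsible for $p^2+1$ in $\lambda_1(p)$ --- while keeping the measure normalisations rigorously in step with Step 1 (the bookkeeping of $2$-powers and of the similitude torus is exactly where normalisation slips tend to hide). Everything else is standard: Tamagawa number $1$ for the simply connected form, the archimedean volume of $\mathrm{USp}(4)$, the values $\zeta(-1),\zeta(-3)$, and a routine finite-field order computation.
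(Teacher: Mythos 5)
You should first be aware that the paper does not prove this theorem: it is quoted from Ibukiyama \cite{ibuk1} (see also Hashimoto--Ibukiyama \cite{hash1}, \cite{hash2}), and the author only remarks that it is analogous to the Eichler mass formula and is a special case of the exact mass formula of Gan--Hanke--Yu \cite{gan}. Your proposal is therefore not competing with an argument in the text but reconstructing the one behind the citations, and your general framework is the right one: Tamagawa number $1$ for the simply connected inner form of $\mathrm{Sp}_4$, a local--global factorisation of the mass, identification of the universal constant with $\tfrac14|\zeta(-1)\zeta(-3)|=\tfrac1{5760}$, and the observation that $U_1$ and $U_2$ differ only at the ramified prime (a fact the paper itself uses later when it shows $U_{1,q}=U_{2,q}$ for $q\neq p$). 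Your $p=2$ cross-check is also consistent with the paper's data, since $|\Gamma^{(1)}|=2|\mathcal{O}^{\times}|^2=1152=5760/5$ and $|\Gamma^{(2)}|=1920=5760/3$.

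As a proof, however, the proposal has a genuine gap exactly where the content of the theorem lies. The two polynomials $(p-1)(p^2+1)$ and $p^2-1$ come entirely from the local volume computation at $p$, and your Step 2 only describes how one would do this --- identify the two conjugacy classes of maximal parahorics of $\mathrm{GU}_2(D_p)$ corresponding to the two lattice genera, determine the reductive quotients of their special fibres, and compare local densities with $|\mathrm{GSp}_4(\mathbb{F}_p)|$ under a compatible normalisation of measures. You do not actually identify those special fibres or perform the computation, and you say so yourself; in particular the claimed source of the asymmetric factor $p^2+1$ (a unitary group for $\mathbb{F}_{p^2}/\mathbb{F}_p$) is a guess rather than a derivation. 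The same caveat applies to the bookkeeping in Step 1: the factor $\tfrac14$, the handling of the similitude torus and of the quotient by $\{\pm I\}$ are asserted, not derived, and since the mass in the paper is defined via the full stabilisers $\Gamma_m=\mathrm{GU}_2(D)\cap z_mUz_m^{-1}$ (which contain the central $\pm 1$), a slipped power of $2$ here would silently change the constant. As it stands the argument establishes only that $M(U_i)=c\,\lambda_i(p)$ for some universal $c$ and some unspecified local factors; to complete it you must either carry out the parahoric/local-density computation at $p$ or, as the paper suggests, specialise the exact mass formula of \cite{gan}.
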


This formula is analogous to the Eichler mass formula and is also a special case of the mass formula of Gan, Hanke and Yu \cite{gan}.

\begin{prop}
$h_1=1$ if and only if $|\Gamma^{(1)}| = \frac{5760}{(p-1)(p^2+1)}$. Similarly $h_2=1$ if and only if $|\Gamma^{(2)}| = \frac{5760}{p^2-1}$.
\end{prop}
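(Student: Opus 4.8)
The plan is to read the statement off the definition of the mass together with Ibukiyama's formulae. Recall that $M(U) = \sum_{m=1}^{h}\frac{1}{|\Gamma_m|}$ is a finite sum of strictly positive rational numbers: each $\Gamma_m$ is a finite group (by Lemma \ref{YUN} and the accompanying finiteness discussion) containing the identity, so $|\Gamma_m|\geq 1$ and $\frac{1}{|\Gamma_m|}>0$.

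First I would fix the double-coset representatives used to compute the mass so that $z_1$ is the identity element in the case of $U_1$, and the fixed element $g$ defining the non-principal genus lattice in the case of $U_2$. With this choice $\Gamma_1 = \text{GU}_2(D)\cap z_1 U z_1^{-1}$ is precisely $\Gamma^{(1)}$ in the first case and $\Gamma^{(2)}$ in the second. Hence, writing $i\in\{1,2\}$, we have
\[M(U_i) = \frac{1}{|\Gamma^{(i)}|} + \sum_{m=2}^{h_i}\frac{1}{|\Gamma_m|},\]
a decomposition in which every term of the trailing sum is positive.

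For the forward implication, if $h_i=1$ the trailing sum is empty, so $M(U_i) = \frac{1}{|\Gamma^{(i)}|}$; substituting Ibukiyama's values $M(U_1) = \frac{(p-1)(p^2+1)}{5760}$ and $M(U_2) = \frac{p^2-1}{5760}$ then gives the asserted orders of $\Gamma^{(1)}$ and $\Gamma^{(2)}$. Conversely, if $|\Gamma^{(1)}|$ (resp. $|\Gamma^{(2)}|$) equals the stated quantity, then $\frac{1}{|\Gamma^{(i)}|} = M(U_i)$, so the trailing sum above vanishes; being a sum of strictly positive terms, it must be empty, forcing $h_i=1$.

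The only point that needs a little care — the \textbf{main obstacle}, such as it is — is the bookkeeping in the first step: one must check that the lattice $\mathcal{O}^2$ (resp. the chosen non-principal lattice $\mathcal{O}^2 g$) genuinely occurs among the double-coset representatives computing the mass, so that $\Gamma^{(i)}$ really appears as one of the terms $\frac{1}{|\Gamma_m|}$. This is immediate, since the double coset of the identity (resp. of $g$) may always be taken as one of the classes $z_m U_i$, but it is worth making explicit because the entire equivalence rests on identifying that single term of the mass sum with $\Gamma^{(i)}$.
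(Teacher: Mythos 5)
Your overall argument is the standard one and is essentially what the paper leaves implicit (the proposition is stated there without proof): the mass is a finite sum of strictly positive terms, one of which is $\frac{1}{|\Gamma^{(i)}|}$, so $h_i=1$ forces $M(U_i)=\frac{1}{|\Gamma^{(i)}|}$ and conversely equality of that single term with the whole mass forces the remaining terms to be absent. Combined with Ibukiyama's closed forms for $M(U_1)$ and $M(U_2)$, this gives the stated equivalences.

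However, the one step you single out as needing care contains a slip. For $U_2$ you propose taking $z_1=g$, but $g$ is an element of $\mathrm{GL}_2(D)$ with $g\bar g^{T}$ equal to a non-scalar Hermitian matrix (Theorem \ref{AQQ}), so $g$ need not lie in $\mathrm{GU}_2(D_{\mathbb{A}_f})$ at all and cannot serve as a double coset representative; and even formally, conjugating $U_2=\mathrm{Stab}(\mathcal{O}^2g)$ by $g$ returns the stabilizer of $\mathcal{O}^2$, i.e.\ $U_1$, not something whose rational points give $\Gamma^{(2)}$. The correct (and simpler) choice is $z_1=\mathrm{id}$ in \emph{both} cases: the paper defines $\Gamma^{(2)}=\mathrm{GU}_2(D)\cap U_2$, and $U_2$ already has the lattice $\mathcal{O}^2g$ built into its definition, so the identity coset yields $\Gamma_1=\mathrm{GU}_2(D)\cap U_i=\Gamma^{(i)}$ directly. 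With that correction the proof is complete.
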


\begin{cor}
$h_1 = 1$ if and only if $p=2,3$. Similarly $h_2=1$ if and only if $p=2,3,5,7,11$.
\end{cor}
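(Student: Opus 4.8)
The plan is to feed the preceding Proposition into a finite search. That Proposition reduces the two claims to the numerical equalities $|\Gamma^{(1)}| = \tfrac{5760}{(p-1)(p^2+1)}$ and $|\Gamma^{(2)}| = \tfrac{5760}{p^2-1}$; since $|\Gamma^{(1)}|$ and $|\Gamma^{(2)}|$ are orders of finite groups, hence positive integers, a necessary condition for $h_1=1$ (respectively $h_2=1$) is that $(p-1)(p^2+1)$ (respectively $p^2-1$) divide $5760 = 2^7\cdot 3^2\cdot 5$. In particular $(p-1)(p^2+1)\leq 5760$, so $p\leq 17$, in the first case and $p^2-1\leq 5760$, so $p\leq 73$, in the second, leaving only finitely many candidate primes in each case.

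First I would settle $h_1$. Running through the primes $p\leq 17$, the integer $(p-1)(p^2+1)$ has a prime factor larger than $5$, or the factor $5^2$, for every $p\notin\{2,3\}$, so $p=2$ and $p=3$ are the only candidates. For these two it remains to check the equality, and Lemma \ref{YUN} makes this immediate: $\Gamma^{(1)} = \mathrm{GU}_2(\mathcal{O})$ is the set of diagonal and anti-diagonal matrices with entries in $\mathcal{O}^{\times}$, whence $|\Gamma^{(1)}| = 2|\mathcal{O}^{\times}|^2$. Inserting the classical values $|\mathcal{O}^{\times}| = 24$ for $p=2$ and $|\mathcal{O}^{\times}| = 12$ for $p=3$ yields $|\Gamma^{(1)}| = 1152 = \tfrac{5760}{5}$ and $288 = \tfrac{5760}{20}$, so $h_1=1$ exactly for $p=2,3$.

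For $h_2$ the divisibility condition $p^2-1\mid 5760$ leaves the eight candidates $p\in\{2,3,5,7,11,17,19,31\}$, which is larger than the asserted list. I would first prune it by noting that $\Gamma^{(2)}$ always contains a subgroup of order $4$: with $g=g_{\lambda,\mu}$ and $A=g\overline{g}^{T}$ as in Lemma \ref{T}, the matrix $g^{-1}\left(\begin{smallmatrix}0&1\\-1&0\end{smallmatrix}\right)g$ lies in $\mathrm{SU}_2(D)\cap g^{-1}\mathrm{GL}_2(\mathcal{O})g=\Gamma^{(2)}$ — one verifies $\left(\begin{smallmatrix}0&1\\-1&0\end{smallmatrix}\right)A\left(\begin{smallmatrix}0&-1\\1&0\end{smallmatrix}\right)=A$ using $\mathrm{tr}(r)=0$, exactly as in the proof of Lemma \ref{T} — and its square is $-I$, so it has order $4$. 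Hence $4\mid|\Gamma^{(2)}|$, which rules out $p=31$ because there $\tfrac{5760}{p^2-1}=6$. For the five survivors $p=2,3,5,7,11$ one confirms $h_2=1$ by computing $\Gamma^{(2)}$ explicitly with Algorithm 1: for $p=2$ this recovers Ibukiyama's matrices on p.$592$ of \cite{ibuk1} and gives $|\Gamma^{(2)}|=1920=\tfrac{5760}{3}$, and for $p=3,5,7,11$ one obtains $|\Gamma^{(2)}|=720,\,240,\,120,\,48$, matching $\tfrac{5760}{p^2-1}$ in each case; whereas for $p=17$ and $p=19$ the same computation shows that $|\Gamma^{(2)}|$ is not the value $\tfrac{5760}{p^2-1}$ (namely $20$ and $16$ respectively) demanded by the Proposition, so $h_2>1$ there. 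Alternatively, one can bypass the mass formula and read the list $\{2,3,5,7,11\}$ directly off Ibukiyama's explicit class number formula for the non-principal genus in \cite{ibuk1}.

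I expect the real obstacle to be the two ``accidental'' candidates $p=17$ and $p=19$: neither the divisibility test nor the constraint $4\mid|\Gamma^{(2)}|$ excludes them, so one genuinely has to carry out (or cite) the explicit determination of $\Gamma^{(2)}$ for these primes. Every other step is elementary divisibility bookkeeping or an appeal to the known order of $\mathcal{O}^{\times}$.
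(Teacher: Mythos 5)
Your proposal is correct and follows essentially the same route as the paper: reduce to the integrality/equality criterion of the preceding Proposition, enumerate the finitely many candidate primes by divisibility into $5760$, settle $h_1$ via $|\Gamma^{(1)}|=2|\mathcal{O}^{\times}|^2$ with $|\mathcal{O}^{\times}|=24,12$, and settle $h_2$ by computing $|\Gamma^{(2)}|$ with Algorithm 1 for the surviving candidates $\{2,3,5,7,11,17,19,31\}$. Your order-$4$ subgroup observation eliminating $p=31$ a priori is a nice small refinement the paper does not make (it simply runs the computation for all eight candidates), but it does not change the substance of the argument.
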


\begin{proof}
A quick calculation shows that the only primes to satisfy $\frac{5760}{(p-1)(p^2+1)}\in\mathbb{N}$ are $p=2,3$. Recall $|\Gamma^{(1)}| = 2|\mathcal{O}^{\times}|^2$. For $p=2,3$ we have $|\mathcal{O}^{\times}| = 24, 12$ respectively and one checks that both values satisfy the equation.

The primes satisfying $\frac{5760}{p^2-1}\in\mathbb{N}$ are $p=2,3,5,7,11,17,19,31$. Using Algorithm $1$ one finds that $|\Gamma^{(2)}| = \frac{5760}{p^2-1}$ for the cases $p=2,3,5,7,11$.
\end{proof}

Ibukiyama and Hashimoto have produced formulae in \cite{hash1} and \cite{hash2} that give the values of $h_1$ and $h_2$ for any ramified prime. Their formulae agree with this result.

\subsection{Finding the Hecke representatives}

Now that we have found an algorithm to generate the elements of $\Gamma^{(2)}$ we consider the same question for the Hecke representatives for the $T_{u,q}$ operator on $A_{j,k-3}(D)$ (where $q\neq p$ is a fixed prime).

\begin{prop}\label{R}
Let $D$ be a quaternion algebra over $\mathbb{Q}$ ramified at $p,\infty$ for some $p\in\{2,3,5,7,11\}$. Suppose $u\in\text{GU}_2(D_{\mathbb{A}_f})$ is chosen as in Definition \ref{Q}. Then \[U_2 u U_2 = \coprod_{[x_i]\in Y_q / \Gamma^{(2)}} x_i U_2.\]
\end{prop}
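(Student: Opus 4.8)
The plan is to pass from the adelic double coset $U_2 u U_2$ to the single local place $q$, enumerate the local Hecke cosets there explicitly via the isomorphism $\psi$ of Proposition \ref{P}, and then translate the local index set back into the global rational set $Y_q/\Gamma^{(2)}$. First I would observe that, since $u$ has identity component away from $q$, the double coset $U_2 u U_2$ decomposes as a product over primes, with the component at every place $\ell\neq q$ being the trivial coset $(U_2)_\ell$ and the component at $q$ being the genuine double coset $(U_2)_q u_q (U_2)_q$ inside $\mathrm{GU}_2(D_q)$. Here $(U_2)_q = \mathrm{Stab}_{\mathrm{GU}_2(D_q)}(\mathcal O_q^2 g_q)$, and since $D$ is split at $q$ we have $\mathcal O_q^2 g_q = \mathcal O_q^2 h_q$ for some $h_q\in\mathrm{GU}_2(D_q)$ (as in the construction before Definition \ref{Q}), so $(U_2)_q = h_q\bigl(\mathrm{GU}_2(D_q)\cap M_2(\mathcal O_q)\bigr)h_q^{-1}$, which under $\psi$ is $h_q$-conjugate to $K_q := \mathrm{GSp}_4(\mathbb Q_q)\cap M_4(\mathbb Z_q)$ by Proposition \ref{P}.

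The key computational step is to count and exhibit the left cosets $K_q M_q K_q = \coprod_i m_i K_q$ for $M_q = \mathrm{diag}(1,1,q,q)\in\mathrm{GSp}_4(\mathbb Q_q)$; this is the classical Hecke coset decomposition for $\mathrm{GSp}_4$ at the Siegel-type generator, and the number of cosets is $q^3+q^2+q+1$. Pulling these back through $\psi^{-1}$ and conjugating by $h_q$ gives representatives for $(U_2)_q u_q (U_2)_q/(U_2)_q$ inside $\mathrm{GU}_2(D_q)$. I would then appeal to the fact that $\mathrm{GU}_2(D)$ has class number one for the level $U_2$ (i.e. $h_2=1$, which holds exactly for $p\in\{2,3,5,7,11\}$ by the corollary following the mass formula) together with Proposition \ref{E}: when $h=1$ we may choose the Hecke representatives for $T_u$ to lie in $\mathrm{GU}_2(D)$, i.e. in $\mathrm{GU}_2(D)\cap g^{-1}M_2(\mathcal O)^\times g$. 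Combined with the similitude bookkeeping — every representative of $K_qM_qK_q/K_q$ has similitude factor $q$, hence so does each rational representative — this places the representatives precisely in $\mathrm{GU}_2(D)_q\cap g^{-1}M_2(\mathcal O)^\times g = Y_q$.

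Finally I would check that two rational elements $x_i, x_j\in Y_q$ give the same coset $x_iU_2 = x_jU_2$ if and only if $x_j^{-1}x_i\in\mathrm{GU}_2(D)\cap U_2 = \Gamma^{(2)}$: the ``if'' direction is immediate, and the ``only if'' direction uses $\mathrm{GU}_2(D)\cap U_2 = \Gamma^{(2)}$ once more, noting that $x_j^{-1}x_i$ is rational and lies in $U_2$ exactly when it lies in $\Gamma^{(2)}$ since away from $q$ the components are already in $(U_2)_\ell$ and at $q$ integrality is forced. This gives a bijection between $Y_q/\Gamma^{(2)}$ and the set of left cosets in $U_2uU_2/U_2$, so that $U_2uU_2 = \coprod_{[x_i]\in Y_q/\Gamma^{(2)}} x_iU_2$ as claimed. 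The main obstacle I anticipate is bookkeeping the three-way translation cleanly — making sure the conjugation by $h_q$ (which is only defined locally at $q$) does not spoil integrality at other places and that the global rational representatives promised by Proposition \ref{E} can genuinely be taken inside $M_2(\mathcal O)^\times$ rather than merely in $\mathrm{GU}_2(D)$; this is where the explicit diagonalization machinery of Lemma \ref{T} and Corollary \ref{N}, controlling which entries stay integral under conjugation by $P_{\lambda,\mu}$, does the real work.
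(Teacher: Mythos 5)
Your proposal is correct and follows essentially the same route as the paper's proof: rational representatives via the class-number-one hypothesis and Proposition \ref{E}, local analysis at $q$ by conjugating with $h_q$ and transporting to $\text{GSp}_4(\mathbb{Z}_q)M_q\text{GSp}_4(\mathbb{Z}_q)$ via Proposition \ref{P}, similitude bookkeeping to land in $Y_q$, and the identification $\text{GU}_2(D)\cap U_2=\Gamma^{(2)}$ for coset equivalence. The only small discrepancies are that the paper resolves the global constraint $\mu(x_i)\in\{\pm q\}$ to $+q$ by positive definiteness of the similitude (a sign you gloss over), and that Lemma \ref{T} and Corollary \ref{N} play no role here --- they belong to the explicit computation of $W_\theta$, not to this proposition.
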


\begin{proof}
Consider an arbitrary decomposition: \[U_2 u U_2 = \coprod x_i U_2.\] By Proposition \ref{E} we may take $x_i\in \text{GU}_2(D)$ for each $i$. For the rest of the proof we embed $\text{GU}_2(D) \hookrightarrow \text{GU}_2(D_{\mathbb{A}_f})$ diagonally.

Note that for any prime $l\neq q$ we have \[U_{2,l} u_l U_{2,l} = U_{2,l} = \text{Stab}_{\text{GU}_2(D_l)}(\mathcal{O}_l^2 g_l) =\text{GU}_2(D_l) \cap g_l^{-1}\text{GL}_2(\mathcal{O}_l) g_l\] Thus $x_i\in  \text{GU}_2(D_l) \cap g_l^{-1} M_2(\mathcal{O}_l)^{\times} g_l$ and $\mu(x_i) \in\mathbb{Z}_l^{\times}$ for all $i$.

To study the behaviour locally at $q$ we fix a choice of $h_q\in\text{GU}_2(D_q)$ such that $\mathcal{O}_q^2 g_q = \mathcal{O}_q^2 h_q$ (which is possible since $\mathcal{O}_q^2 g_q$ is locally equivalent to $\mathcal{O}_q^2$). Note that $h_q g_q^{-1}\in\text{GL}_2(\mathcal{O}_q)$ so that $h_q = k_q g_q$ for some $k_q\in\text{GL}_2(\mathcal{O}_q)\subseteq M_2(\mathcal{O}_q)^{\times}$.

Conjugation by $h_q$ gives a bijection between $U_{2,q} u_q U_{2,q}$ and $G(h_q u_q h_q^{-1})G$, where $G = \text{GU}_2(D_q)\cap \text{GL}_2(\mathcal{O}_q)$. If we fix an isomorphism as in Proposition \ref{P} then the double coset $G(h_q u_q h_q^{-1})G$ is in bijection with $\text{GSp}_4(\mathbb{Z}_q) M_q \text{GSp}_4(\mathbb{Z}_q)$ (where $M_q = \text{diag}(1,1,q,q)$).

Since by definition $h_q u_q h_q^{-1} \mapsto M_q\in \text{GSp}_4(\mathbb{Q}_q)\cap M_4(\mathbb{Z}_q)$ we see that $h_q u_q h_q^{-1} \in M_2(\mathcal{O}_q)^{\times}$ and so $u_q \in \text{GU}_2(D_q)\cap h_q^{-1} M_2(\mathcal{O}_q)^{\times} h_q$. 

However: \[h_q^{-1} M_2(\mathcal{O}_q)^{\times} h_q = g_q^{-1} (k_q^{-1} M_2(\mathcal{O}_q)^{\times} k_q) g_q = g_q^{-1} M_2(\mathcal{O}_q)^{\times} g_q,\] thus $u_q\in \text{GU}_2(D_q)\cap g_q^{-1} M_2(\mathcal{O}_q)^{\times} g_q$ and the same can be said about the $x_i$.

Also since both the conjugation and our chosen isomorphism respect similitude we find that $\mu(u_q) = \mu(M_q) = q$ and so $\mu(U_{2,q} u_q U_{2,q})\subseteq q\mathbb{Z}_q^{\times}$. In particular $\mu(x_i) \in q\mathbb{Z}_q^{\times}$.

Globally we now see that \[x_i \in \text{GU}_2(D)\cap \prod_l\left(\text{GU}_2(D_l)\cap g_l^{-1}M_2(\mathcal{O}_l)^{\times}g_l\right) = \text{GU}_2(D) \cap g^{-1}M_2(\mathcal{O})^{\times}g\] for each $i$. We also observe that $\mu(x_i)\in\mathbb{Z}\cap\left(q\mathbb{Z}_q^{\times}\prod_{l\neq q}\mathbb{Z}_l^{\times}\right) = \{\pm q\}$. However in our case the similitude is positive definite so that $\mu(x_i) = q$.

Thus the $x_i$ can be taken to lie in $Y_q$. It is clear that each such element lies in the double coset.

It remains to see which elements of $Y_q$ generate the same left coset. We have $x_i U_2 = x_j U_2$ if and only if $x_j^{-1} x_i \in U_2$. But also $x_i, x_j\in \text{GU}_2(D)$, hence $x_j^{-1} x_i\in \text{GU}_2(D) \cap U_2 = \Gamma^{(2)}$. So equivalence of left cosets is upto right multiplication by $\Gamma^{(2)}$.
\end{proof}

We have a nice formula for the degree of $T_{u,q}$, found in the work of Ihara \cite{ihara}.

\begin{prop}
For $q\neq p$ we have that deg$(T_{u,q}) = (q+1)(q^2+1)$.
\end{prop}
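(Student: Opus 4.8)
The plan is to count the left cosets in the decomposition of $U_2 u U_2$ using Proposition \ref{R}, which identifies the index $\deg(T_{u,q})$ with the cardinality $|Y_q/\Gamma^{(2)}|$. Since conjugation by $g$ gives the bijection $\Phi_q : Y_q \to W_q$ of Proposition \ref{O}, and since $\Gamma^{(2)} = Y_1$ corresponds under $\Phi_1$ to $W_1$, it suffices to count $|W_q/W_1|$ (one checks that the bijection is equivariant for the right actions, so the quotients match up). However, rather than grinding through the norm-equation description in Corollary \ref{ABCZ}, the cleaner route is purely local: the degree of a Hecke operator is a product of local degrees, and away from $q$ the local double coset is trivial, so $\deg(T_{u,q})$ equals the number of left $U_{2,q}$-cosets in $U_{2,q} u_q U_{2,q}$.

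First I would use Proposition \ref{P} and the argument already given in the proof of Proposition \ref{R}: conjugation by $h_q$ followed by the integrality-preserving isomorphism $\psi$ carries $U_{2,q} u_q U_{2,q}$ bijectively onto the double coset $\text{GSp}_4(\mathbb{Z}_q)\, M_q\, \text{GSp}_4(\mathbb{Z}_q)$ with $M_q = \text{diag}(1,1,q,q)$, and this bijection is a bijection of left-coset spaces. So the problem reduces to the classical local computation $\deg(T_{u,q}) = [\text{GSp}_4(\mathbb{Z}_q) M_q \text{GSp}_4(\mathbb{Z}_q) : \text{GSp}_4(\mathbb{Z}_q)]$. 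Second, I would compute this index. This is the degree of the standard degree-$q$ Hecke operator for $\text{GSp}_4$ (the one whose Satake parameter is the "spin" middle operator), and the count of left cosets $M_q \mapsto \coprod x_i \text{GSp}_4(\mathbb{Z}_q)$ is a well-known calculation: one parametrizes the cosets by the possible images of the standard symplectic lattice $\mathbb{Z}_q^4$ under elements of the double coset, i.e. by symplectic-similitude sublattices of $\mathbb{Z}_q^4$ of the appropriate elementary-divisor type $(1,1,q,q)$ with similitude $q$, equivalently by certain flags/isotropic subspaces over $\mathbb{F}_q$. Carrying out the Bruhat-type enumeration (or simply quoting Ihara's \cite{ihara} normalization) gives the count $1 + q + q^2 + q^3 = (q+1)(q^2+1)$.

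The main obstacle is the bookkeeping in the local coset enumeration for $\text{GSp}_4$: one must be careful that the similitude constraint $\mu = q$ is imposed (so we are not counting the larger $\text{Sp}_4$-style coset space), and that the elementary divisor type is exactly $(1,1,q,q)$ rather than, say, $(1,q,q,q^2)$. Concretely I would fix the standard symplectic basis, write a general representative in block form $\begin{pmatrix} A & B \\ 0 & D \end{pmatrix}$ with $D = q A^{-T}$, reduce $A$ to Hermite normal form modulo $\text{GL}_2(\mathbb{Z}_q)$ acting on the left (giving the $q+1$ "upper" cosets coming from $\text{GL}_2$, i.e. the $\mathbb{P}^1(\mathbb{F}_q)$), and then count the residual freedom in $B$ modulo the stabilizer, which contributes the remaining factor and assembles to $(q+1)(q^2+1)$. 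Alternatively, and more economically, since the statement is explicitly attributed to Ihara \cite{ihara}, the honest proof is: the double coset $U_{2,q} u_q U_{2,q}$ is identified above with the $\text{GSp}_4(\mathbb{Z}_q)$-double coset of $\text{diag}(1,1,q,q)$, whose degree is computed in \cite{ihara} to be $(q+1)(q^2+1)$; the global statement follows since all local factors away from $q$ are trivial. I would present the local reduction in full and then invoke \cite{ihara} for the final index, noting the count agrees with the number of $\mathbb{F}_q$-points of the relevant partial flag variety.
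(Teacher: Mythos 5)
Your proposal is correct and matches the paper's approach: the paper offers no argument beyond citing Ihara, and your reduction to the local double coset $\text{GSp}_4(\mathbb{Z}_q)\,\mathrm{diag}(1,1,q,q)\,\text{GSp}_4(\mathbb{Z}_q)$ (already implicit in the proof of Proposition \ref{R}) followed by the classical count $1+q+q^2+q^3=(q+1)(q^2+1)$ is exactly the computation being invoked. The details you supply --- triviality of the local factors away from $q$, the similitude constraint $\mu=q$, and the elementary-divisor enumeration of type $(1,1,q,q)$ --- correctly fill in what the paper leaves to the reference.
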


Employing similar arguments to Proposition \ref{R} we get the following:

\begin{prop}
Let $D$ be a quaternion algebra over $\mathbb{Q}$ ramified at $p,\infty$ for some $p\in\{2,3\}$. Suppose $u\in\text{GU}_2(D_{\mathbb{A}_f})$ is chosen as in Definition \ref{Q}. Then \[U_1 u U_1 = \coprod_{[x_i]\in (\text{GU}_2(D)_q\cap\text{M}_2(\mathcal{O})^{\times})/ \Gamma^{(1)}} x_i U_1.\]
\end{prop}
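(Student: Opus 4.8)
The plan is to mirror the proof of Proposition~\ref{R} almost verbatim, replacing the non-principal lattice $\mathcal{O}^2 g$ by the principal lattice $\mathcal{O}^2$ (so $g_l$ is replaced by the identity at every place, and $U_2$ by $U_1$), and then track where the argument simplifies. First I would start with an arbitrary decomposition $U_1 u U_1 = \coprod x_i U_1$ and use Proposition~\ref{E} (valid since $h_1=1$ for $p=2,3$) to take each $x_i\in\text{GU}_2(D)$, embedded diagonally. At each prime $l\neq q$ the local double coset is trivial, $U_{1,l}u_l U_{1,l} = U_{1,l} = \text{Stab}_{\text{GU}_2(D_l)}(\mathcal{O}_l^2) = \text{GU}_2(D_l)\cap \text{GL}_2(\mathcal{O}_l)$, so $x_i \in \text{GU}_2(D_l)\cap M_2(\mathcal{O}_l)^\times$ with $\mu(x_i)\in\mathbb{Z}_l^\times$.

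At the prime $q$, here is exactly where the argument gets easier than in Proposition~\ref{R}: since the lattice is already $\mathcal{O}_q^2$, there is no need to introduce the auxiliary element $h_q$ with $\mathcal{O}_q^2 g_q = \mathcal{O}_q^2 h_q$ — one simply has $G := \text{GU}_2(D_q)\cap \text{GL}_2(\mathcal{O}_q)$ and the double coset $G u_q G$, which under the integrality-preserving isomorphism of Proposition~\ref{P} corresponds to $\text{GSp}_4(\mathbb{Z}_q) M_q \text{GSp}_4(\mathbb{Z}_q)$ with $M_q = \text{diag}(1,1,q,q)$. Since $M_q\in M_4(\mathbb{Z}_q)$, we get $u_q\in \text{GU}_2(D_q)\cap M_2(\mathcal{O}_q)^\times$ directly, and the same for the $x_i$; and since the isomorphism respects similitude, $\mu(u_q)=\mu(M_q)=q$, so $\mu(x_i)\in q\mathbb{Z}_q^\times$. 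Assembling the local information globally gives $x_i\in \text{GU}_2(D)\cap \prod_l M_2(\mathcal{O}_l)^\times = \text{GU}_2(D)\cap M_2(\mathcal{O})^\times$, and $\mu(x_i)\in\mathbb{Z}\cap\left(q\mathbb{Z}_q^\times\prod_{l\neq q}\mathbb{Z}_l^\times\right) = \{\pm q\}$, which is forced to be $+q$ by positive-definiteness of the similitude. Hence each $x_i$ can be taken in $\text{GU}_2(D)_q\cap M_2(\mathcal{O})^\times$, and conversely every such element lies in the double coset.

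Finally, for the coset equivalence: $x_i U_1 = x_j U_1$ iff $x_j^{-1} x_i\in U_1$, and since $x_i,x_j\in\text{GU}_2(D)$ this means $x_j^{-1}x_i\in\text{GU}_2(D)\cap U_1 = \Gamma^{(1)}$, so left cosets are parametrized by $\left(\text{GU}_2(D)_q\cap M_2(\mathcal{O})^\times\right)/\Gamma^{(1)}$, as claimed.

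I do not expect a genuine obstacle here, since the principal case is strictly simpler than Proposition~\ref{R}; the only point requiring a little care is confirming that the integrality-preserving isomorphism of Proposition~\ref{P} does indeed send $\text{GU}_2(D_q)\cap\text{GL}_2(\mathcal{O}_q)$ onto $\text{GSp}_4(\mathbb{Z}_q)$ compatibly with the choice of $v_q\mapsto M_q$ — but that is exactly what Proposition~\ref{P} and Definition~\ref{Q} provide. The mild subtlety worth flagging in the write-up is that this statement is only asserted for $p\in\{2,3\}$, precisely because $h_1=1$ fails for larger $p$ and Proposition~\ref{E} is then unavailable, so one cannot in general take the $x_i$ rational.
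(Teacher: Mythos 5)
Your proposal is correct and matches the paper's intent exactly: the paper gives no separate proof here, stating only that the result follows ``employing similar arguments to Proposition \ref{R}'', and your write-up is precisely that adaptation, with the right observation that the auxiliary element $h_q$ becomes unnecessary because the lattice is already locally principal. Nothing further is needed.
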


Since $\Gamma^{(1)}$ is given explicitly it is possible to write down explicit representatives in this case.

\begin{cor}\label{S}
Let $n\in\mathbb{N}$. For each $k\in\mathbb{N}$ let $X_k = \{\alpha\in\mathcal{O}\,|\,N(\alpha) = k\}$, $t_k = |X_k/\mathcal{O}^{\times}|$ and $x_{1,k}, x_{2,k}, ..., x_{t_k,k}$ be a set of representatives for $X_k/\mathcal{O}^{\times}$. For such a choice of $k$ define: \[R_{k} :=  \left\{\left(\begin{array}{cc}x_{i,k} & v\\ w & x_{j,k}\end{array}\right)\,\Bigg|\,\begin{array}{c}1\leq i,j\leq t_k,\quad v,w\in X_{n-k}\\ x_{i,k}\overline{w} + v\overline{x_{j,k}} = 0\end{array}\right\}.\]

The following matrices are representatives for $(\text{GU}_2(D)_n\cap M_2(\mathcal{O})^{\times})/ \Gamma^{(1)}$: \[\bigcup_{k=m+1}^n R_k, \qquad \text{if $n=2m+1$ is odd}\]\[\left(\bigcup_{k=m+1}^n R_k\right)\cup R_{m}',\qquad \text{if $n=2m$ is even}.\] The finite subset $R_{m}'\subset R_m$ is to be constructed in the proof.
\end{cor}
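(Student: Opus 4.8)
The plan is to enumerate $\text{GU}_2(D)_n\cap \text{M}_2(\mathcal{O})^\times$ directly and then read off representatives using the explicit description of $\Gamma^{(1)}=\text{GU}_2(\mathcal{O})$ from Lemma \ref{YUN}. Write $\eta=\left(\begin{array}{cc} x & y\\ z & w\end{array}\right)$ with $x,y,z,w\in\mathcal{O}$. The condition $\mu(\eta)=n$ is $\eta\bar\eta^T = nI$, which unpacks to $N(x)+N(y)=n$, $N(z)+N(w)=n$ and $x\bar z+y\bar w=0$; any such $\eta$ automatically lies in $\text{M}_2(\mathcal{O})^\times$ since $\tfrac1n\bar\eta^T$ is an inverse. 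Taking norms in $x\bar z=-y\bar w$ gives $N(x)N(z)=N(y)N(w)$, and combining this with the first two relations forces $N(x)+N(z)=n$, hence $N(x)=N(w)$ and $N(y)=N(z)$. So once $N(x)=k$ is fixed, automatically $N(w)=k$ and $N(y)=N(z)=n-k$.

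Next I would analyse the right action of $\Gamma^{(1)}$. By Lemma \ref{YUN} it consists of $\text{diag}(\alpha,\beta)$ and $\left(\begin{array}{cc}0 & \alpha\\ \beta & 0\end{array}\right)$ with $\alpha,\beta\in\mathcal{O}^\times$; right multiplication by $\text{diag}(\alpha,\beta)$ sends $(x,y,z,w)\mapsto(x\alpha,y\beta,z\alpha,w\beta)$ and right multiplication by an anti-diagonal unit additionally swaps the two columns. Since $D$ is a division algebra, the right stabiliser in $\mathcal{O}^\times$ of a nonzero element of $\mathcal{O}$ is trivial, so for nonzero $x$ there is a unique $\alpha\in\mathcal{O}^\times$ with $x\alpha$ equal to the chosen representative of $x\mathcal{O}^\times$ in $X_{N(x)}/\mathcal{O}^\times$. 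Now given $\eta$, the integers $N(x)$ and $N(y)$ are nonnegative and sum to $n$, so one of them is $\geq\lceil n/2\rceil$; applying a column swap if necessary we may assume $N(x)=k$ with $\lceil n/2\rceil\leq k\leq n$, i.e. $m+1\leq k\leq n$ when $n=2m+1$ and $m\leq k\leq n$ when $n=2m$. Fixing $k$ and then applying the unique $\text{diag}(\alpha,\beta)$ with $x\alpha=x_{i,k}$ and $w\beta=x_{j,k}$ (legitimate because $N(x)=N(w)=k$) brings $\eta$ to the form appearing in the definition of $R_k$: the two off-diagonal entries automatically remain of norm $n-k$ and still satisfy the cross relation, since multiplying an off-diagonal pair by common units does not disturb $x\bar z+y\bar w=0$. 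Conversely every element of $R_k$ lies in $\text{GU}_2(D)_n\cap\text{M}_2(\mathcal{O})^\times$.

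For $k>m$ this yields a clean transversal: the normalisation of $x$ and of $w$ is unique by triviality of the stabilisers, so no two distinct elements of $\bigcup_{k=m+1}^n R_k$ are $\Gamma^{(1)}$-equivalent via a diagonal element, while an anti-diagonal element would turn $N(x)=k$ into $N(x)=n-k\leq m$, taking us out of this range. Moreover along an orbit the norm of the $(1,1)$-entry takes only the two values $N(x)$ and $N(y)=n-N(x)$, so every orbit whose common value $\max\{N(x),N(y)\}$ exceeds $m$ has a representative in this union. When $n=2m+1$ these two column norms can never be equal, so $\max\{N(x),N(y)\}\geq m+1$ for every orbit, and $\bigcup_{k=m+1}^n R_k$ is a full transversal, giving the first formula.

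When $n=2m$ the remaining orbits are exactly those with $N(x)=N(y)=N(z)=N(w)=m$, and here the column swap preserves the stratum, so after the diagonal normalisation each such orbit generically corresponds to two elements of $R_m$. Concretely, swapping the columns of $\left(\begin{array}{cc}x_{i,m} & v\\ w & x_{j,m}\end{array}\right)\in R_m$ and then rescaling the new diagonal entries $v,w$ back into their chosen coset representatives defines an involution $\iota$ of $R_m$; using once more that the stabilisers are trivial one checks that two elements of $R_m$ are $\Gamma^{(1)}$-equivalent precisely when they lie in one $\iota$-orbit. I would therefore take $R_m'\subseteq R_m$ to be a set of representatives for $R_m/\langle\iota\rangle$ (one element from each two-element orbit, together with every $\iota$-fixed element), whence $\bigl(\bigcup_{k=m+1}^n R_k\bigr)\cup R_m'$ meets each $\Gamma^{(1)}$-orbit exactly once, giving the second formula. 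The main obstacle is this last case: verifying that $\iota$ is a well-defined involution on the normalised set $R_m$ — that ``swap columns, then renormalise'' squares to the identity — and that passing to $R_m'$ removes exactly the double-counting and nothing more. This is routine but somewhat fiddly bookkeeping with the unit ambiguities, and everything else reduces to the elementary norm identities above.
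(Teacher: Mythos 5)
Your argument is correct and follows essentially the same route as the paper: derive $N(\alpha)=N(\delta)$, $N(\beta)=N(\gamma)$ from the similitude equations, use a column swap to push the norm of the $(1,1)$-entry above $n/2$, normalise the diagonal entries by the (free) right unit action, and in the boundary case $N(\alpha)=N(\beta)=m$ pair off matrices under the residual anti-diagonal equivalence to define $R_m'$. The only difference is that where you leave the well-definedness of the involution $\iota$ as ``routine but fiddly,'' the paper settles it by solving explicitly for the unique anti-diagonal unit matrix, namely $x = \overline{w}x_{t,m}/m$ and $y = \overline{v}x_{s,m}/m$, which immediately shows each matrix in $R_m$ is equivalent to at most one other.
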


\begin{proof}
Let $\nu = \left(\begin{array}{cc}\alpha & \beta\\ \gamma & \delta\end{array}\right)\in M_2(\mathcal{O})^{\times}$. In order for $\nu\in\text{GU}_2(D)_n$ to hold we must satisfy the equations: \[N(\alpha) + N(\beta) = n\]\[N(\gamma) + N(\delta) = n\]\[\alpha\overline{\gamma} + \beta\overline{\delta} = 0.\] In a similar vein to previous discussion these equations imply that $N(\alpha) = N(\delta)$ and $N(\beta) = N(\gamma)$. Note that the first equation implies that $0\leq N(\alpha) \leq n$. 

We wish to study equivalence of these matrices under right multiplication by $\Gamma^{(1)}$. 

\textbf{Case 1:}  $N(\alpha) \neq N(\beta)$.

We may assume that $N(\alpha) > \frac{n}{2}$ since for $x,y\in\mathcal{O}^{\times}$: \[\left(\begin{array}{cc}\alpha & \beta\\ \gamma & \delta\end{array}\right)\left(\begin{array}{cc}0 & x\\ y & 0\end{array}\right) = \left(\begin{array}{cc}\beta y & \alpha x\\ \delta y & \gamma x\end{array}\right)\] and $N(\beta y) = N(\beta) = n - N(\alpha) > n - \frac{n}{2} = \frac{n}{2}$.

Under this assumption there are no anti-diagonal equivalences so it remains to check for diagonal equivalences.

Now: \[\left(\begin{array}{cc}\alpha & \beta\\ \gamma & \delta\end{array}\right)\left(\begin{array}{cc}x & 0\\ 0 & y\end{array}\right) = \left(\begin{array}{cc}\alpha x & \beta y\\ \gamma x & \delta y\end{array}\right).\]

Letting $k = N(\alpha)$ choose $x,y\in\mathcal{O}^{\times}$ so that $\alpha x = x_{i,k}$ and $\delta y = x_{j,k}$ for some $1\leq i,j\leq t_{k}$. Then $\nu$ is equivalent to $\left(\begin{array}{cc}x_{i,k} & v\\ w & x_{j,k}\end{array}\right)$. Clearly the matrices of this form are inequivalent.

It is now clear that $R_k$ gives representatives for the particular subcase $N(\alpha)=k > \frac{n}{2}$. 

\textbf{Case 2:} $N(\alpha) = N(\beta)=\frac{n}{2} = m$.

The matrices $\left(\begin{array}{cc}x_{i,m} & v\\ w & x_{j,m}\end{array}\right)$ may now have extra anti-diagonal equivalences.

Suppose \[\left(\begin{array}{cc}x_{i,m} & v\\ w & x_{j,m}\end{array}\right)\left(\begin{array}{cc}0 & x\\ y & 0\end{array}\right) = \left(\begin{array}{cc}x_{s,m} & v'\\ w' & x_{t,m}\end{array}\right).\] Then $x,y$ are uniquely determined: \[x = \frac{\overline{w}x_{t,m}}{m}\]\[y = \frac{\overline{v}x_{s,m}}{m}.\] Thus each such matrix $\left(\begin{array}{cc}x_{i,m} & v\\ w & x_{j,m}\end{array}\right)$ with $v,w\in X_m$ can only be equivalent to at most one other matrix: \[\left(\begin{array}{cc}x_{s,m} & \frac{x_{i,m}\overline{w}x_{t,m}}{m}\\ \frac{x_{j,m}\overline{v}x_{s,m}}{m} & x_{t,m}\end{array}\right),\] where $v\sim x_{s,m}$ and $w\sim x_{t,m}$ under the action of right unit multiplication.

Let $R_{m}'$ be a set consisting of a choice of matrix from each of these equivalence pairs (as $x_{i,m}$ and $x_{j,m}$ run through representatives for $X_m/\mathcal{O}^{\times}$ and $v,w$ run through elements of $X_m$ satisfying $x_{i,m}\overline{w} + v\overline{x}_{j,m}=0$). Then it is now clear that $R_{m}'$ is a set of representatives for this subcase.
\end{proof}

In the subcase $k=n-1$ it is often easier to use anti-diagonal equivalence (since $X_1/\mathcal{O}^{\times} = \{1\}$). In this case we can identify: \[R_{n-1} \longleftrightarrow \left\{\left(\begin{array}{cc}1 & z\\ -\overline{z} & 1\end{array}\right)\,\vline\,z\in X_{n-1}\right\}.\]

When $n=2$ exactly half of these will form a set of representatives. In fact it is simple to see that the equivalent pairs would be: \[\left(\begin{array}{cc}1 & z\\ -\overline{z}& 1\end{array}\right) \sim \left(\begin{array}{cc}1 & -z\\ \overline{z} & 1\end{array}\right)\] Thus: \[R_{1}' = \left\{\left(\begin{array}{cc}1 & z_i\\ -\overline{z_i} & 1\end{array}\right)\,\vline\,[z_i]\in \mathcal{O}^{\times}/\{\pm 1\}\right\}.\]

\begin{example}
If we apply Corollary \ref{S} to the choices: \[D = \left(\frac{-1,-1}{\mathbb{Q}}\right)\]\[\mathcal{O} = \mathbb{Z}\oplus\mathbb{Z}i\oplus\mathbb{Z}j\oplus\mathbb{Z}\frac{1+i+j+k}{2}\]\[n=3\]\[X_3/\mathcal{O}^{\times} = \{[1\pm i\pm j]\}\] we find that Hecke representatives for $U_1$ with ramified prime $p=2$ and $q=3$ are given by: \[\left(\begin{array}{cc}x & 0\\ 0 & y\end{array}\right), \qquad x,y\in\{1\pm i \pm j\}\]\[\left(\begin{array}{cc} 1 & z \\ -\overline{z} & 1\end{array}\right), \qquad z\in\mathcal{O}, N(z)=2.\] There are $40$ representatives here as expected and they agree with the explicit representatives given by Ibukiyama on p.$594$ of \cite{ibuk1}.\qed
\end{example}

So far we have not  needed the open compact subgroup $U_1$ but it is actually of use to us in studying $U_2$.

\begin{lem}
Let $u\in \text{GU}_2(D_{\mathbb{A}_f})$ be chosen to form the $T_{u,q}$ operator with respect to both $U_1$ and $U_2$ (for prime $q\neq p$). Then the Hecke representatives for $T_{u,q}$ with respect to $U_1$ and $U_2$ can be taken to be the same.
\end{lem}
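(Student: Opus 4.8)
The plan is to exploit the fact that, locally at the prime $q\neq p$, the subgroups $U_1$ and $U_2$ agree, while at all other places they differ only by the choice of left $\mathcal{O}$-lattice. First I would recall that $U_1 = \mathrm{Stab}(\mathcal{O}^2)$ and $U_2 = \mathrm{Stab}(\mathcal{O}^2 g)$, and that by Shimura's theorem the lattice $\mathcal{O}^2 g$ is locally $\mathrm{GU}_2(D_q)$-equivalent to $\mathcal{O}_q^2$ at every split prime $q$; since $q\neq p$, this applies, so $U_{1,q} = U_{2,q}$ after suitable conjugation, and more importantly the double cosets $U_{1,q} u_q U_{1,q}$ and $U_{2,q} u_q U_{2,q}$ are identified under conjugation by $h_q$ (where $h_q = k_q g_q$ with $k_q\in\mathrm{GL}_2(\mathcal{O}_q)$, exactly as in the proof of Proposition \ref{R}). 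At places $l\neq q$ the element $u_l$ is the identity, so $U_{i,l} u_l U_{i,l} = U_{i,l}$ contributes nothing to the index and no nontrivial representatives.

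Next I would invoke Proposition \ref{E}: since $h=1$ for both $U_1$ and $U_2$ (when $p\in\{2,3\}$, which is the range where $\Gamma^{(1)}$ makes sense), we may take all Hecke representatives to lie in $\mathrm{GU}_2(D)$ embedded diagonally. Running the same argument as in Proposition \ref{R} verbatim for $U_1$, the representatives for $T_{u,q}$ with respect to $U_1$ can be taken in the set $\mathrm{GU}_2(D)_q\cap M_2(\mathcal{O})^{\times}$, and those for $U_2$ in $Y_q = \mathrm{GU}_2(D)_q\cap g^{-1}M_2(\mathcal{O})^{\times}g$. The key observation is that an element $x\in\mathrm{GU}_2(D)$ of similitude $q$ lies in $g^{-1}M_2(\mathcal{O})^{\times}g$ if and only if $gxg^{-1}\in M_2(\mathcal{O})^{\times}$, and at every place this condition is equivalent to the corresponding local integrality condition; since conjugation by $g_l$ preserves $M_2(\mathcal{O}_l)^{\times}$ at every $l\neq p$ (because $g_l \in \mathrm{GL}_2(\mathcal{O}_l)$ there up to the $k_l$ adjustment) and at $l=p$ both subgroups already agree, the two sets of global representatives coincide after conjugation by $g$. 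Equivalently, conjugation by $g$ carries a set of coset representatives for $(\mathrm{GU}_2(D)_q\cap M_2(\mathcal{O})^{\times})/\Gamma^{(1)}$ to a set of coset representatives for $Y_q/\Gamma^{(2)}$, using $\Gamma^{(2)} = g^{-1}\Gamma^{(1)} g$ — wait, more precisely $\Gamma^{(2)} = \mathrm{SU}_2(D)\cap g^{-1}\mathrm{GL}_2(\mathcal{O})g$ and the bijection $\Phi_{\theta}$ of Proposition \ref{O} intertwines these — so the indices match and the representatives can literally be taken to be the same adelic elements.

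Concretely, I would argue: let $\{x_i\}\subset\mathrm{GU}_2(D)$ be representatives for $U_1 u U_1$. By the local analysis, $\mu(x_i) = q$ and $x_i\in\mathrm{GU}_2(D_l)\cap g_l^{-1}M_2(\mathcal{O}_l)^{\times}g_l$ for all $l$ (at $l=p$ trivially, at $l=q$ by the $h_q$-conjugation argument as in Proposition \ref{R}, at other $l$ because $u_l$ is trivial), hence $x_i\in Y_q$. Conversely each $x_i$ lies in $U_2 u U_2$ by the same token. Finally, $x_iU_2 = x_jU_2 \iff x_j^{-1}x_i\in\mathrm{GU}_2(D)\cap U_2 = \Gamma^{(2)}$, whereas $x_iU_1 = x_jU_1 \iff x_j^{-1}x_i\in\Gamma^{(1)}$; since $\Gamma^{(1)}$ and $\Gamma^{(2)}$ have the same cardinality (both equal $\tfrac{5760}{p^2-1}$... actually no — $|\Gamma^{(1)}| = \tfrac{5760}{(p-1)(p^2+1)}$ versus $|\Gamma^{(2)}| = \tfrac{5760}{p^2-1}$, which differ), I would instead check directly that $x_j^{-1}x_i\in\Gamma^{(1)} \iff x_j^{-1}x_i\in\Gamma^{(2)}$ for these particular elements: at $l\neq p$ both conditions read $x_j^{-1}x_i\in\mathrm{GL}_2(\mathcal{O}_l)$ after the appropriate conjugation (which agrees since $g_l$ differs from the identity by $\mathrm{GL}_2(\mathcal{O}_l)$), and at $l=p$ both $U_{1,p}$ and $U_{2,p}$ stabilize the \emph{same} lattice when $q\neq p$... no, that is false — $U_{1,p}$ and $U_{2,p}$ stabilize inequivalent lattices. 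The resolution is that $x_j^{-1}x_i$ has similitude $1$ and lies in $M_2(\mathcal{O}_p)^{\times}$ after conjugation by $g_p$, but one must verify it lies in $\mathrm{GL}_2(\mathcal{O}_p)$ after conjugation by $g_p$ as well; this uses that a similitude-$1$ element of $M_2(\mathcal{O}_p)^{\times}$ is automatically invertible over $\mathcal{O}_p$, hence in $\mathrm{GL}_2(\mathcal{O}_p)$, which gives the equivalence at $p$ too. I expect \textbf{this last point at the ramified prime} — reconciling membership in $\Gamma^{(1)}$ versus $\Gamma^{(2)}$ for the specific difference elements $x_j^{-1}x_i$ of similitude one — to be the main technical obstacle, and it is handled by the observation that norm-one local integral elements are units together with the fact that $g_p^{-1}\mathrm{GL}_2(\mathcal{O}_p)g_p$ and $\mathrm{GL}_2(\mathcal{O}_p)$ intersect $\mathrm{SU}_2(D_p)$ in the same maximal compact.
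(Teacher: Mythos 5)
Your opening paragraph is precisely the paper's argument, and it already suffices: $u$ has identity component away from $q$, so the left-coset decomposition of $U_i u U_i$ is governed entirely by the local double coset at $q$, where $U_{1,q}=U_{2,q}$ because $\mathcal{O}_q^2 g_q$ is equivalent to $\mathcal{O}_q^2$ (for the explicit $g_{\lambda,\mu}$ one even has $g_q\in\mathrm{GL}_2(\mathcal{O}_q)$, so the two lattices are literally equal); at every other place the double coset is just $U_{i,l}$ and contributes no nontrivial representatives. The paper stops there, deferring the passage to rational representatives to the subsequent corollary via Proposition \ref{E}.

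The difficulty is with your second and third paragraphs, which try to identify the rational coset spaces $(\mathrm{GU}_2(D)_q\cap M_2(\mathcal{O})^{\times})/\Gamma^{(1)}$ and $Y_q/\Gamma^{(2)}$ directly, and whose key step is false. You assert that $\mathrm{GL}_2(\mathcal{O}_p)$ and $g_p^{-1}\mathrm{GL}_2(\mathcal{O}_p)g_p$ meet $\mathrm{SU}_2(D_p)$ in the same maximal compact. They do not: $U_{1,p}$ and $U_{2,p}$ are the stabilizers of representatives of the two distinct local genera at the ramified prime, i.e. two non-conjugate maximal compacts of $\mathrm{GU}_2(D_p)$. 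Since the conditions defining $\Gamma^{(1)}$ and $\Gamma^{(2)}$ agree at every place $l\neq p$ (as $g_l\in\mathrm{GL}_2(\mathcal{O}_l)$ there), your claim would force $\Gamma^{(1)}=\Gamma^{(2)}$, contradicting $|\Gamma^{(1)}|=\tfrac{5760}{(p-1)(p^2+1)}\neq\tfrac{5760}{p^2-1}=|\Gamma^{(2)}|$ (for $p=3$ these are $288$ and $720$). The supporting observation that a similitude-one element of $M_2(\mathcal{O}_p)^{\times}$ is automatically a unit applies to the standard Hermitian form $I$, not to the form $A=g\bar{g}^T$ of determinant $p$ that governs $\Gamma^{(2)}$, since inverting there introduces the denominator $\det A = p$. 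So the equivalence $x_j^{-1}x_i\in\Gamma^{(1)}\iff x_j^{-1}x_i\in\Gamma^{(2)}$ cannot be established along the lines you propose; the lemma should be proved adelically at the single place $q$, as in your first paragraph, with rationality of the representatives handled separately by the class-number-one hypothesis.
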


\begin{proof}
Recall that $u$ has identity component away from $q$ and $u_q\notin U_{2,q}$ so the there is only one local condition to check, that $U_{2,q} = U_{1,q}$.

Now $U_2 = \text{Stab}_{\text{GU}_2(D_{\mathbb{A}_f})}(\mathcal{O}^2 g)$ where $g\in\text{GL}_2(D)$ is chosen so that $\mathcal{O}^2 g$ is in the non-principal genus.

We know that $U_{2,q} = \text{Stab}_{\text{GU}_2(D_q)}(\mathcal{O}_q^2 g_q)$. However by construction we know that $\mathcal{O}_q^2 g_q$ is equivalent to $\mathcal{O}_q^2$ (since $q\neq p$). Thus there exists $h_q\in \text{GU}_2(D_q)$ such that $\mathcal{O}_q^2 g_q = \mathcal{O}_q^2 h_q$.

It is then clear that: \[\text{Stab}_{\text{GU}_2(D_q)}(\mathcal{O}_q^2 g_q) = \text{Stab}_{\text{GU}_2(D_q)}(\mathcal{O}_q^2 h_q) = \text{Stab}_{\text{GU}_2(D_q)}(\mathcal{O}_q^2).\] Thus $U_{2,q} = U_{1,q}$ and so we are done.
\end{proof}

This result is useful since we have seen that it is generally easier to generate Hecke representatives for $T_{u,q}$ with respect to $U_1$.

\begin{cor}
Let the ramified prime of $D$ be $p\in\{2,3\}$. Then we may use the representatives from Corollary \ref{S} as Hecke representatives for $T_{u,q}$ with respect to $U_2$ (for $q \neq p$).
\end{cor}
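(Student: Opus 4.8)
The plan is to chain together the three results that immediately precede the statement. First I would note that since $p\in\{2,3\}$ we have $h_1=1$, so the hypothesis of Proposition \ref{E} is met for $U_1$ and the Proposition on Hecke representatives with respect to $U_1$ applies, giving
\[U_1 u U_1 = \coprod_{[x_i]\in (\text{GU}_2(D)_q\cap \text{M}_2(\mathcal{O})^{\times})/\Gamma^{(1)}} x_i U_1,\]
with each $x_i$ taken in $\text{GU}_2(D)$. Thus a set of left-coset representatives for $T_{u,q}$ with respect to $U_1$ is precisely a transversal of $(\text{GU}_2(D)_q\cap \text{M}_2(\mathcal{O})^{\times})/\Gamma^{(1)}$.

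Next I would apply Corollary \ref{S} with $n=q$. This produces the explicit finite list of matrices appearing in that corollary — the union $\bigcup_{k=m+1}^{q} R_k$ when $q$ is odd, and $\left(\bigcup_{k=m+1}^{q} R_k\right)\cup R_m'$ in the even case $q=2$, which can occur only when $p=3$ — and by Corollary \ref{S} this list is exactly a transversal of $(\text{GU}_2(D)_q\cap \text{M}_2(\mathcal{O})^{\times})/\Gamma^{(1)}$. Combining the two observations, the list from Corollary \ref{S} is a valid set of Hecke representatives for $T_{u,q}$ acting on $\mathcal{A}(\text{GU}_2(D),U_1,V_{j,k-3})$.

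Finally, I would invoke the Lemma preceding the statement. Because $q\neq p$, Shimura's classification forces $\mathcal{O}_q^2 g_q$ to be right $\text{GU}_2(D_q)$-equivalent to $\mathcal{O}_q^2$, so $U_{1,q}=U_{2,q}$; since the fixed $u$ has trivial component away from $q$, it defines $T_{u,q}$ simultaneously for $U_1$ and $U_2$, and the double cosets $U_1 u U_1$ and $U_2 u U_2$ admit a common system of left-coset representatives. Hence the same list from Corollary \ref{S} serves as Hecke representatives for $T_{u,q}$ with respect to $U_2$, which is the assertion.

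The only real obstacle here is bookkeeping rather than computation: one must be careful that the element $u$ of Definition \ref{Q} is genuinely usable for both $U_1$ and $U_2$, i.e. that the local conditions defining the two open compact subgroups agree at every place touched by the double coset. This is exactly what the preceding Lemma records, and it rests on nothing more than $q\neq p$ together with the local lattice classification; no new estimates or calculations beyond those already carried out in Corollary \ref{S} are needed.
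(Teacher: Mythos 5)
Your proposal is correct and follows essentially the same route as the paper: the paper likewise combines the class-number-one condition for $p\in\{2,3\}$ (so that Corollary \ref{S} supplies rational representatives with respect to $U_1$) with the preceding Lemma identifying the $U_1$ and $U_2$ representatives. Your version simply spells out the intermediate steps in slightly more detail.
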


\begin{proof}
Since $p\in\{2,3\}$ we know that both the class numbers of $U_1,U_2$ are $1$. Hence both admit rational Hecke representatives.

We also know that given Hecke representatives for $T_{u,q}$ with respect to $U_1$ we may use them for $U_2$. Thus the rational representatives from Corollary \ref{S} can be used for $U_2$.
\end{proof}

\subsection{Implementing the trace formula}
Now that we have algorithms that generate the data needed to use the trace formula we discuss some of the finer details in its implementation, namely how to find character values. Denote by $\chi_{j,k-3}$ the character of the representation $V_{j,k-3}$.

Given $g = \left(\begin{array}{cc}\alpha & \beta\\ \gamma & \delta\end{array}\right)\in\text{GU}_2(D)$ we may produce a matrix $A\in\text{GSp}_4(\mathbb{C})$ via the embedding: \[g \longmapsto \left(\begin{array}{cccc}\alpha_1 + \alpha_2 \sqrt{a} & \beta_1 + \beta_2\sqrt{a} & \alpha_3 + \alpha_4\sqrt{a} & \beta_3 + \beta_4\sqrt{a}\\ \gamma_1 + \gamma_2\sqrt{a} & \delta_1 + \delta_2\sqrt{a} & \gamma_3 + \gamma_4\sqrt{a} & \delta_3 + \delta_4\sqrt{a}\\ b(\alpha_3 - \alpha_4\sqrt{a}) & b(\beta_3 - \beta_4\sqrt{a}) & \alpha_1 - \alpha_2\sqrt{a} & \beta_1 - \beta_2\sqrt{a}\\ b(\gamma_3 - \gamma_4\sqrt{a}) & b(\delta_3 - \delta_4\sqrt{a}) & \gamma_1 - \gamma_2\sqrt{a} & \delta_1 - \delta_2\sqrt{a}\end{array}\right),\] where $a = i^2$ and $b=j^2$ in $D$. 

This embedding is the composition of the standard embedding $D^{\times}\hookrightarrow M_2(K(\sqrt{a}))$ and the isomorphism $\text{GU}_2(M_2(K(\sqrt{a}))) \cong \text{GSp}_4(K(\sqrt{a})) \subseteq\text{GSp}_4(\mathbb{C})$ given in Theorem \ref{L}.

We know that the image of $\text{GU}_2(\mathbb{H})_1\cap \text{GU}_2(D)$ under this embedding is a subgroup of $\text{USp}(4)$, so that the matrix $B = \frac{A}{\sqrt{\mu(A)}}\in \text{USp}(4)$. By writing $A = (\sqrt{\mu(A)}I)B$ it follows that: \[\chi_{j,k-3}(g) = \chi_{j,k-3}(A) = \mu(A)^{\frac{j+2k-6}{2}}\chi_{j,k-3}(B).\]

In order to find $\chi_{j,k-3}(B)$ we first find the eigenvalues of $B$. This is equivalent to conjugating into the maximal torus of diagonal matrices. Since $B\in\text{USp}(4)$ these eigenvalues will come in two complex conjugate pairs $z,\overline{z}, w, \overline{w}$ for $z,w$ on the unit circle.

The Weyl character formula gives: \[\chi_{j,k-3}(B) = \frac{w^{j+1}(w^{2(k-2)}-1)(z^{2(j+k-1)}-1) - z^{j+1}(z^{2(k-2)}-1)(w^{2(j+k-1)}-1)}{(z^2-1)(w^2-1)(zw-1)(z-w)(zw)^{j+k-3}}.\] For any of the cases $z^2=1, w^2=1, zw=1, z=w$ one must formally expand this concise formula into a polynomial expression (not an infinite sum since each factor on the denominator except $zw$ divides the numerator). It is easy for a computer package to compute this expansion for a given $j,k$.

\subsection{Finding the trace contribution for the new subspace}

Let $\text{tr}(T_{u,q})^{\text{new}}$ and $\text{tr}(T_{u,q})^{\text{old}}$ be the traces of the action of $T_{u,q}$ on $A_{j,k-3}^{\text{new}}(D)$ and $A_{j,k-3}^{\text{old}}(D)$ respectively. Then $\text{tr}(T_{u,q})^{\text{new}} = \text{tr}(T_{u,q}) - \text{tr}(T_{u,q})^{\text{old}}$. 

Recall that each eigenform in $\mathcal{A}_{j,k-3}^{\text{old}}(D)$ is given by a special pair of eigenforms $F_1\in\mathcal{A}(D^{\times},U,V_j)$ and $F_2\in \mathcal{A}(\text{GU}_2(D),U_2,V_{j,k-3})$. If $j>0$ then $F_1$ corresponds to a unique eigenform in $S_{j+2}^{\text{new}}(\Gamma_0(p))$ by Eichler's correspondence. Attached to the pair $(F_1,F_2)$ is an eigenform $\theta_{F_1\otimes F_2} \neq 0$ in $M_{j+2k-2}(\text{SL}_2(\mathbb{Z}))$ (it is a cusp form if $j+2k-6 \neq 0$).

Let $\alpha_n, \beta_n, \gamma_n$ be the Hecke eigenvalues of $F_1,F_2, \theta_{F_1\otimes F_2}$ respectively. Ibukiyama links the eigensystems as follows.

\begin{thm}
For $q\neq p$ we have the following identity in $\mathbb{C}(t)$: \[\sum_{k=0}^{\infty} \beta_{q^k}t^k = \frac{1 - q^{j+2k-4}t^2}{(1 - \alpha_q q^{k-2} t + q^{j+2k-3}t^2)(1 - \gamma_q t + q^{j+2k-3}t^2)}.\]
\end{thm}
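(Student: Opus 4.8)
The plan is to read the identity off the local spinor parameters of $F_2$ at $q$, which in turn are produced by Ibukiyama's theta construction, and then to invoke the standard relation between the radial Hecke series of a $\text{GSp}_4$-eigenform and its spin Euler factor.

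First I would fix what the two sides mean at the place $q\neq p$. Since $q$ is unramified, Proposition \ref{P} gives an integrality-preserving isomorphism $\text{GU}_2(D_q)\cong\text{GSp}_4(\mathbb{Q}_q)$, under which $T_{u,q}$ is the genus $2$ operator $T(q)$ (double coset of $\text{diag}(1,1,q,q)$) and the $\beta_{q^m}$ are the eigenvalues of Andrianov's operators $T(q^m)$. By the Satake isomorphism for $\text{GSp}_4$ — i.e. Andrianov's classical identity, in the normalisation appropriate to the weight $\text{Symm}^j\otimes\text{det}^k$ — one has
\[\sum_{m\geq 0}\beta_{q^m}t^m=\frac{1-q^{j+2k-4}t^2}{\prod_{i=1}^{4}(1-\mu_i t)},\]
where $\{\mu_1,\mu_2,\mu_3,\mu_4\}$ is the spin Satake parameter of $F_2$ at $q$, the $\mu_i$ occurring in two pairs each of product $q^{j+2k-3}$. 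Thus it suffices to establish the factorisation $\prod_i(1-\mu_i t)=(1-\alpha_q q^{k-2}t+q^{j+2k-3}t^2)(1-\gamma_q t+q^{j+2k-3}t^2)$.

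The arithmetic input is the computation of the $\mu_i$, where I would use that $F_2$ is old, i.e. $\theta_{F_1\otimes F_2}\neq 0$. Ibukiyama's theta series realises a global theta correspondence (of Yoshida type) from $D^\times\times\text{GU}_2(D)$ to $\text{SL}_2$; spelled out place by place, at $q\neq p$ it becomes the explicit unramified theta correspondence for the corresponding split dual pair. Its upshot is that the local parameter of $F_2$ at $q$ is the ``sum'' of the local parameter of the weight $j+2k-2$ eigenform $\theta_{F_1\otimes F_2}$ and the local parameter of $F_1$ scaled by $q^{k-2}$, the scaling being an artefact of the harmonic polynomial (hence of the weights $V_j$, $V_{j,k-3}$) built into the theta kernel. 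Concretely, writing the $q$-Satake parameters of the weight $j+2$ form attached to $F_1$ by Eichler as $\{\alpha',\alpha''\}$ with $\alpha'+\alpha''=\alpha_q$, $\alpha'\alpha''=q^{j+1}$, and those of $\theta_{F_1\otimes F_2}$ as $\{\delta',\delta''\}$ with $\delta'+\delta''=\gamma_q$, $\delta'\delta''=q^{j+2k-3}$, this says $\{\mu_i\}=\{\alpha' q^{k-2},\,\alpha'' q^{k-2},\,\delta',\,\delta''\}$. Grouping the pair $\{\alpha' q^{k-2},\alpha'' q^{k-2}\}$ and the pair $\{\delta',\delta''\}$ in $\prod_i(1-\mu_i t)$ reproduces exactly the denominator in the statement; note $(j,k)\neq(0,3)$ is precisely the condition making $\theta_{F_1\otimes F_2}$ cuspidal, so that this bookkeeping is valid.

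The main obstacle is carrying out the middle step with all normalisations correct: producing the precise scaling $q^{k-2}$ rather than a neighbouring power of $q$, and checking that the numerator $1-q^{j+2k-4}t^2$ is genuinely independent of $F_2$. I would do this either by quoting Ibukiyama's computation in \cite{ibuk1} directly, or — staying inside this paper's machinery — by computing the effect of $T_{u,q}$ (and of the second spherical Hecke operator at $q$, which is needed to pin down the full series and not merely the first relation) on theta lifts, pushing the explicit Hecke coset representatives of Proposition \ref{R} and Corollary \ref{S} through the theta kernel; this reproves the needed relation by a finite but laborious calculation. A secondary point to record is nondegeneracy: $\theta_{F_1\otimes F_2}\neq 0$, which holds by the definition of the old space, is exactly what forces the theta lift of $F_1\otimes F_2$ to be the expected (Yoshida-type) lift whose parameters are as listed, rather than a degenerate one.
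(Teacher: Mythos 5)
First, a point of comparison: the paper does not prove this identity at all. It is imported verbatim as a theorem of Ibukiyama (the sentence preceding it is ``Ibukiyama links the eigensystems as follows''), and its only role in the text is to yield Corollary \ref{U} and hence the oldform trace contribution of Corollary \ref{DD}. So your proposal is not competing with an argument in the paper; it is an attempt to supply one where the paper simply defers to \cite{ibuk1} and \cite{ibuk3}.

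As an outline, your route is the standard and correct one, and the bookkeeping checks out: Andrianov's identity for the radial Hecke series of a $\text{GSp}_4$-eigenform of weight $\text{Symm}^j\otimes\det^k$ does produce the numerator $1-q^{j+2k-4}t^2$ over the degree-four spin factor, and your proposed parameter set $\{\alpha'q^{k-2},\alpha''q^{k-2},\delta',\delta''\}$ consists of two pairs each of product $q^{j+2k-3}$, so grouping them reproduces exactly the stated denominator. The genuine gap is that the identification of this parameter set \emph{is} the entire content of the theorem, and you do not establish it: you assert it as ``the upshot'' of the unramified local theta correspondence for Ibukiyama's Yoshida-type lift, and then explicitly defer the verification --- the precise twist $q^{k-2}$ rather than a neighbouring power, the independence of the numerator from $F_2$, and the eigenvalue of the second spherical generator at $q$ that is needed to determine the full series rather than only its linear coefficient --- either to ``quoting Ibukiyama's computation'' or to ``a finite but laborious calculation'' that is not performed. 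As written, the argument therefore reduces the theorem to itself, or to the very source the paper already cites. To close the gap you would have to carry out that local computation, e.g.\ by pushing the explicit coset representatives of Proposition \ref{R} and Corollary \ref{S} through the theta kernel and verifying the commutation relations for both generators of the spherical Hecke algebra at $q$; that is precisely what is done in \cite{ibuk1}.
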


\begin{cor}\label{U}
For $q\neq p$ we have $\beta_q = \gamma_q + q^{k-2}\alpha_q$. 
\end{cor}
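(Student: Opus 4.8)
The plan is to extract $\beta_q$ as the coefficient of $t^1$ in the power series expansion of the right-hand side of the theorem, and to show this equals $\gamma_q + q^{k-2}\alpha_q$. First I would write the right-hand side as a product of three factors: the numerator $1 - q^{j+2k-4}t^2$, which contributes nothing at order $t^1$; and the two geometric-type factors $(1 - \alpha_q q^{k-2}t + q^{j+2k-3}t^2)^{-1}$ and $(1 - \gamma_q t + q^{j+2k-3}t^2)^{-1}$. Expanding each inverse as a formal power series in $t$, the first has linear coefficient $\alpha_q q^{k-2}$ and the second has linear coefficient $\gamma_q$ (the quadratic terms only affect order $t^2$ and higher). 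Since all three factors have constant term $1$, the coefficient of $t^1$ in the full product is simply the sum of the individual linear coefficients, i.e. $\alpha_q q^{k-2} + \gamma_q$.

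To make this rigorous with minimal fuss, I would compare coefficients of $t^1$ on both sides of the stated identity in $\mathbb{C}(t)$, viewing both sides via their Taylor expansions at $t=0$ (both sides are regular there, having constant term $1$). On the left, the coefficient of $t^1$ in $\sum_k \beta_{q^k} t^k$ is exactly $\beta_q$ (using $\beta_{q^0} = \beta_1 = 1$, which is implicit in the normalization). On the right, clearing denominators, one has
\[
\left(\sum_{k=0}^{\infty}\beta_{q^k}t^k\right)(1 - \alpha_q q^{k-2}t + q^{j+2k-3}t^2)(1 - \gamma_q t + q^{j+2k-3}t^2) = 1 - q^{j+2k-4}t^2,
\]
and reading off the coefficient of $t^1$ gives $\beta_q - \alpha_q q^{k-2} - \gamma_q = 0$, which is precisely the claim.

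There is essentially no obstacle here: the corollary is a direct consequence of the preceding theorem obtained by equating the lowest-degree nontrivial coefficients. The only point requiring a word of care is the normalization $\beta_{q^0}=1$ (so that the constant terms match), which is forced by comparing constant terms in the displayed identity, and the observation that the $t^2$ terms in the two quadratic factors genuinely do not interfere at order $t^1$. I would present this in one short paragraph, since grinding through higher-order coefficients is unnecessary for the stated result.
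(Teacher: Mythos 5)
Your proposal is correct: comparing coefficients of $t^1$ in the Taylor expansion at $t=0$ (with $\beta_{q^0}=1$ forced by the constant terms) immediately yields $\beta_q = \gamma_q + q^{k-2}\alpha_q$, and this is exactly the computation the paper leaves implicit, as it states the corollary without proof. The only point worth flagging is the notational clash between the summation index and the weight $k$ in the theorem's statement, which you have correctly disambiguated.
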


Ibukiyama conjectures that there is a bijection between pairs of eigenforms $(F_1, \theta_F)$ and eigenforms $F_2$. With this in mind it is now possible to calculate the oldform trace contribution.

\begin{cor}\label{DD}
Suppose $j+2k-6 \neq 0$. Let $g_1, g_2, ..., g_m\in S_{j+2k-2}(SL_2(\mathbb{Z}))$ and $h_1, h_2, ..., h_n\in S_{j+2}^{\text{new}}(\Gamma_0(p))$ be bases of normalized eigenforms with Hecke eigenvalues $a_{q,g_i}$ and $a_{q,h_i}$ respectively.

Then for $q\neq p$ and $j>0$: \[\text{tr}(T_{u,q})^{\text{old}} = n\left(\sum_{i=1}^m a_{q,g_i}\right) + mq^{k-2}\left(\sum_{i=1}^n a_{q,h_i}\right).\]
\end{cor}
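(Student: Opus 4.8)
The plan is to compute the trace of $T_{u,q}$ on the whole space $A_{j,k-3}(D) = A_{j,k-3}^{\text{new}}(D) \oplus A_{j,k-3}^{\text{old}}(D)$ by decomposing the old subspace into its eigenform contributions and summing the relevant Hecke eigenvalues. First I would recall that, by definition, $A_{j,k-3}^{\text{old}}(D)$ is spanned by the eigenforms $F_2$ for which there exists an eigenform $F_1 \in \mathcal{A}(D^\times, U, V_j)$ with $\theta_{F_1 \otimes F_2} \neq 0$. Invoking Ibukiyama's conjectured bijection between pairs $(F_1, \theta_F)$ and eigenforms $F_2$, each oldform $F_2$ is indexed by a pair consisting of a normalized eigenform $F_1 \in \mathcal{A}(D^\times, U, V_j)$ and a normalized eigenform $g \in S_{j+2k-2}(\mathrm{SL}_2(\mathbb{Z}))$ playing the role of $\theta_{F_1 \otimes F_2}$. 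Since $j > 0$, Eichler's correspondence identifies the relevant eigenforms $F_1$ (up to scaling) with the normalized newform eigenforms $h_1, \dots, h_n \in S_{j+2}^{\text{new}}(\Gamma_0(p))$, and the Hecke eigenvalue $\alpha_q$ of $F_1$ equals $a_{q,h_i}$.

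Next I would bring in Corollary~\ref{U}: for the oldform $F_2$ attached to the pair $(h_i, g_l)$, its Hecke eigenvalue at $q$ is $\beta_q = \gamma_q + q^{k-2}\alpha_q = a_{q,g_l} + q^{k-2} a_{q,h_i}$, where $\gamma_q = a_{q,g_l}$ is the eigenvalue of the associated $\mathrm{SL}_2(\mathbb{Z})$ form. Because $T_{u,q}$ corresponds to the classical $T_q$ operator under Ibukiyama's correspondence and the oldforms form an eigenbasis of $A_{j,k-3}^{\text{old}}(D)$, the trace $\text{tr}(T_{u,q})^{\text{old}}$ is just the sum of these eigenvalues as the pair $(i, l)$ ranges over $\{1, \dots, n\} \times \{1, \dots, m\}$. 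That is,
\[
\text{tr}(T_{u,q})^{\text{old}} = \sum_{i=1}^n \sum_{l=1}^m \left( a_{q,g_l} + q^{k-2} a_{q,h_i} \right) = n \sum_{l=1}^m a_{q,g_l} + m\, q^{k-2} \sum_{i=1}^n a_{q,h_i},
\]
which is the claimed formula (after relabelling the summation index).

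The main point requiring care — and the step I expect to be the real obstacle — is justifying that the pairs $(h_i, g_l)$ with $\theta_{F_1 \otimes F_2} \neq 0$ actually run over the \emph{entire} product set $\{1,\dots,n\} \times \{1,\dots,m\}$, with no omissions and no repetitions, so that the double sum is over all $nm$ terms. This relies on the non-vanishing statement $\theta_{F_1 \otimes F_2} \neq 0$ holding for every such pair together with the injectivity/surjectivity in Ibukiyama's conjectural bijection; strictly speaking this is conditional on that conjecture, and in a fully rigorous treatment one would flag it as such (as the paper does elsewhere). The hypothesis $j + 2k - 6 \neq 0$ is what guarantees that $\theta_{F_1\otimes F_2}$, when nonzero, lands in the \emph{cusp} space $S_{j+2k-2}(\mathrm{SL}_2(\mathbb{Z}))$ rather than merely $M_{j+2k-2}(\mathrm{SL}_2(\mathbb{Z}))$, which is why the $g_l$ are allowed to range over a basis of the cusp space; this should be noted explicitly. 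The remaining manipulations — expanding the double sum and separating variables — are routine.
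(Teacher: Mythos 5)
Your proposal is correct and follows essentially the same route as the paper, which derives the formula (without writing out a separate proof) from the preceding discussion: Ibukiyama's conjectured bijection between oldform eigenforms $F_2$ and pairs $(F_1,\theta_F)$, Eichler's correspondence identifying the $F_1$ with the $h_i$, and Corollary \ref{U} giving the eigenvalue $a_{q,g_l}+q^{k-2}a_{q,h_i}$ for each pair, summed over all $nm$ pairs. Your explicit flagging of the two delicate points --- that the pairs must exhaust the full product set $\{1,\dots,n\}\times\{1,\dots,m\}$ without repetition (conditional on the bijection), and that $j+2k-6\neq 0$ is what puts $\theta_{F_1\otimes F_2}$ in the cusp space --- is a welcome sharpening of what the paper leaves implicit.
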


\section{Examples and Summary}

The following table highlights the choices for $D, \mathcal{O}, \lambda, \mu$ that were used. 
\begin{center}
\begin{tabular}{c | c | c | c | c}\hline\hline
$p$ & $D$ & $\mathcal{O}$ & $\lambda$ & $\mu$ \\ \hline $2$ & $\left(\frac{-1,-1}{\mathbb{Q}}\right)$ & $\mathbb{Z}\oplus\mathbb{Z} i\oplus\mathbb{Z} j\oplus\mathbb{Z}\left(\frac{1+i+j+k}{2}\right)$ & $1$ & $i-k$\\ $3$ & $\left(\frac{-1,-3}{\mathbb{Q}}\right)$ & $\mathbb{Z}\oplus\mathbb{Z} i\oplus\mathbb{Z} \left(\frac{1+j}{2}\right)\oplus\mathbb{Z}\left(\frac{i+k}{2}\right)$ & $1+i$ & $j$\\ $5$ & $\left(\frac{-2,-5}{\mathbb{Q}}\right)$ & $\mathbb{Z}\oplus\mathbb{Z}\left(\frac{2-i+k}{4}\right)\oplus\mathbb{Z}\left(\frac{2+3i+k}{4}\right)\oplus\mathbb{Z}\left(\frac{-1+i+j}{2}\right)$ & $2$ & $j$\\ $7$ & $\left(\frac{-1,-7}{\mathbb{Q}}\right)$ & $\mathbb{Z}\oplus\mathbb{Z} i\oplus\mathbb{Z} \left(\frac{1+j}{2}\right)\oplus\mathbb{Z}\left(\frac{i+k}{2}\right)$ & $2+\frac{1}{2}i - \frac{1}{2}k$ & $j$\\ $11$ & $\left(\frac{-1,-11}{\mathbb{Q}}\right)$ & $\mathbb{Z}\oplus\mathbb{Z} i\oplus\mathbb{Z} \left(\frac{1+j}{2}\right)\oplus\mathbb{Z}\left(\frac{i+k}{2}\right)$ & $1+3i$ & $j$\\ \hline
\end{tabular}
\end{center}

Using these choices along with the algorithms and results mentioned previously one can calculate the groups $\Gamma^{(1)},\Gamma^{(2)}$ for each such $p$, hence generating tables of dimensions of the spaces $A_{j,k-3}^{\text{new}}(D)$. These tables are given in Appendix A.$1$. 

From these tables one isolates $1$-dimensional spaces. For each possibility the MAGMA command LRatio allows us to test for large primes dividing $\Lambda_{\text{alg}}$ on the elliptic side. The cases that remained were ones where we expect to find examples of Harder's congruence. 

Tables of the congruences observed can be found in Appendix A.2. In particular for $p=2$ one observes congruences provided in Bergstr\"om \cite{bergstrom}. We finish with some new examples for $p=3$.

\begin{example}
By Appendix A.1 we see that \[\text{dim}(A_{2,5}(D)) = \text{dim}(A_{2,5}^{\text{new}}(D)) = \text{dim}(S_{2,8}^{\text{new}}(K(3)))= 1.\] Then $j=2$ and $k=8$ so that $j+2k-2 = 16$. Let $F\in S_{2,8}^{\text{new}}(K(3))$ be the unique normalized eigenform.

One easily checks that dim$(S_{16}^{\text{new}}(\Gamma_0(3))) = 2$. This space is spanned by the two normalized eigenforms with $q$-expansions: \begin{align*}f_1(\tau) &= q - 234q^2 - 2187q^3 + 21988q^4 + 280710q^5 + ... \\ f_2(\tau) &= q - 72q^2 + 2187q^3 - 27584q^4 -221490q^5 + ... \end{align*}

Indeed MAGMA informs us that $\text{ord}_{109}(\Lambda_{\text{alg}}(f_1, 10)) = 1$ and so we expect a congruence of the form: \[b_q \equiv a_q + q^9 + q^6 \bmod 109\] for all $q\neq 3$, where $b_q$ are the Hecke eigenvalues of $F$ and $a_q$ the Hecke eigenvalues of $f_1$. As discussed earlier we will only work with the case $q=2$ for simplicity.

The algorithms mentioned earlier then calculate the necessary $\frac{5760}{3^2-1} = 720$ matrices belonging to $\Gamma^{(2)}$ and the $(2+1)(2^2+1) = 15$ Hecke representatives for the operator $T_{u,2}$. Applying the trace formula we find that $\text{tr}(T_{u,2}) = -312$.

Now since $A_{2,5}(D) = A_{2,5}^{\text{new}}(D)$ we have that $\text{tr}(T_{u,2}) = \text{tr}(T_{u,2})^{\text{new}}$. Also the spaces are $1$-dimensional and so in fact $b_2 = \text{tr}(T_{u,2})^{\text{new}} = -312$. 

The congruence is then simple to check: \[-312 \equiv -234 + 2^9 + 2^6 \bmod 109.\]\qed
\end{example}

\begin{example}

We see an example where we must subtract off the oldform contribution from the trace. By Appendix A.1 we see that \[\text{dim}(A_{8,2}(D)) = 3\] whereas \[\text{dim}(A_{8,2}^{\text{new}}(D)) = \text{dim}(S_{8,5}^{\text{new}}(K(3))) = 1.\] Then $j=8$ and $k=5$ so that $j+2k-2 = 16$ again. Let $F\in S_{8,5}^{\text{new}}(K(3))$ be the unique normalized eigenform.

MAGMA informs us that $\text{ord}_{67}(\Lambda_{\text{alg}}(f_2, 13)) = 1$ and so we expect a congruence of the form: \[b_q \equiv a_q + q^{12} + q^3 \bmod 67\] for all $q\neq 3$.

Applying the trace formula this time gives $\text{tr}(T_{u,2}) = 300$. However since $\text{dim}(A_{8,2}(D)) > \text{dim}(A_{8,2}^{\text{new}}(D))$ there is an oldform contribution to this trace. In order to find it we need Hecke eigenvalues of normalized eigenforms for the spaces $S_{16}(SL_2(\mathbb{Z}))$ and $S_{10}^{\text{new}}(\Gamma_0(3))$.

It is known that dim$(S_{16}(SL_2(\mathbb{Z}))) = 1$ and that the unique normalized eigenform has $q$-expansion: \[g(\tau) = q + 216q^2 -3348q^3 + 13888q^4 + 52110+...\] Also dim$(S_{10}^{\text{new}}(\Gamma_0(3))) = 2$ and the normalized eigenforms have the following $q$-expansions: \begin{align*}h_1(\tau) &= q - 36q^2 - 81q^3 + 784q^4 - 1314q^5 + ... \\ h_2(\tau) &= q + 18q^2 + 81q^3 - 188q^4 - 1540q^5 + ...\end{align*}

Thus using Corollary \ref{DD} the oldform contribution is: \begin{align*} \text{tr}(T_{u,2})^{\text{old}} = 2a_{2,g} + 2^3(a_{2,h_1} + a_{2,h_2}) &= 512 +8(-36+18)\\ &= 288\end{align*} Hence $\text{tr}(T_{u,2})^{\text{new}} = \text{tr}(T_{u,2}) - \text{tr}(T_{u,2})^{\text{old}} = 300 - 288 = 12$. Since our space of algebraic forms is $1$-dimensional we must have $b_2 =  \text{tr}(T_{u,2})^{\text{new}} = 12$.

The congruence is then simple to check: \[12 \equiv -72 + 2^{12} + 2^3 \bmod 67.\]\qed
\end{example}

\begin{example}
Our final example is a case where the Hecke eigenvalues of the elliptic modular form lie in a quadratic extension of $\mathbb{Q}$.

By Appendix A.1 we see that \[\text{dim}(A_{6,2}(D)) = \text{dim}(A_{6,2}^{\text{new}}(D)) = \text{dim}(S_{6,5}^{\text{new}}(K(3))) = 1.\] Then $j=6$ and $k=5$ so that $j+2k-2 = 14$. Let $F\in S_{6,5}^{\text{new}}(K(3))$ be the unique normalized eigenform.

One easily checks that dim$(S_{14}^{\text{new}}(\Gamma_0(3))) = 3$. This space is spanned by the three normalized newforms with $q$-expansions: \begin{align*}f_1(\tau) &= q - 12q^2 - 729q^3 + ... \\ f_2(\tau) &= q - (27 + 3\sqrt{1969})q^2 + 729q^3 + ... \\  f_3(\tau) &= q - (27 - 3\sqrt{1969})q^2 + 729q^3+ ... \end{align*}

MAGMA informs us that $\text{ord}_{47}(N_{\mathbb{Q}(\sqrt{1969})/\mathbb{Q}}(\Lambda_\text{alg}(f_2, 11))) = 1$ and so we expect a congruence of the form: \[b_q \equiv a_q + q^{10} + q^3 \bmod \lambda\] for some prime ideal $\lambda$ of $\mathbb{Z}\left[\frac{1+\sqrt{1969}}{2}\right]$ satisfying $\lambda\mid 47$ (note that $47$ splits in this extension).

The trace formula gives $\text{tr}(T_{u,2}) = 72$ and the usual arguments show that $b_2 = 72$. It is then observed that \[N_{\mathbb{Q}(\sqrt{1969})/\mathbb{Q}}(b_2 - a_2 - 2^{10} - 2^3) = N_{\mathbb{Q}(\sqrt{1969})/\mathbb{Q}}(-933 + 3\sqrt{1969}) = 852768\] This is divisible by $47$ and so the congruence holds for $q=2$.\qed
\end{example} 

\newpage

\appendix
\section{Tables}
\subsection{Newform dimensions}

For each prime $p=2,3,5,7,11$ the following tables give values of dim$(A_{j,k}^{\text{new}}(D))$ for $0\leq j \leq 20$ even and $0\leq k \leq 15$. We use the specific quaternion algebras given in section $6$. Note that Ibukiyama conjectures that these values are equal to dim$(S_{j,k+3}^{\text{new}}(K(p)))$.

\tiny
\begin{table}[H]
\begin{tabular}{|c | c | c | c | c | c | c | c | c | c | c | c | c | c | c | c | c |} 
\hline $p=2$ & $0$ & $1$ & $2$ & $3$ & $4$ & $5$ & $6$ & $7$ & $8$ & $9$ & $10$ & $11$ & $12$ & $13$ & $14$ & $15$\\ \hline
0 & $1$ & $0$ & $0$ & $0$ & $1$ & $1$ & $1$ & $0$ & $2$ & $1$ & $2$ & $1$ & $3$ & $2$ & $3$ & $2$\\
2 & $0$ & $0$ & $0$ & $0$ & $0$ & $0$ & $0$ & $1$ & $1$ & $1$ & $1$ & $2$ & $4$ & $2$ & $4$ & $5$\\
4 & $0$ & $0$ & $0$ & $0$ & $1$ & $1$ & $1$ & $1$ & $3$ & $3$ & $4$ & $4$ & $7$ & $7$ & $9$ & $9$\\
6 & $0$ & $0$ & $0$ & $0$ & $1$ & $0$ & $2$ & $2$ & $4$ & $3$ & $5$ & $7$ & $10$ & $9$ & $13$ & $14$\\
8 & $0$ & $0$ & $0$ & $1$ & $2$ & $2$ & $2$ & $4$ & $7$ & $7$ & $9$ & $10$ & $15$ & $17$ & $20$ & $22$\\
10 & $0$ & $0$ & $0$ & $1$ & $3$ & $4$ & $4$ & $6$ & $10$ & $10$ & $14$ & $17$ & $21$ & $23$ & $29$ & $33$\\
12 & $0$ & $0$ & $1$ & $1$ & $3$ & $5$ & $6$ & $8$ & $12$ & $14$ & $17$ & $21$ & $28$ & $30$ & $37$ & $41$\\
14 & $0$ & $0$ & $1$ & $3$ & $5$ & $6$ & $9$ & $12$ & $17$ & $19$ & $24$ & $29$ & $37$ & $40$ & $49$ & $56$\\
16 & $0$ & $1$ & $2$ & $4$ & $8$ & $9$ & $13$ & $16$ & $23$ & $26$ & $32$ & $38$ & $48$ & $53$ & $63$ & $70$\\
18 & $0$ & $0$ & $2$ & $5$ & $9$ & $11$ & $15$ & $20$ & $28$ & $31$ & $39$ & $46$ & $58$ & $64$ & $76$ & $86$\\
20 & $0$ & $2$ & $3$ & $7$ & $12$ & $16$ & $20$ & $26$ & $35$ & $41$ & $50$ & $58$ & $71$ & $81$ & $94$ & $106$\\ \hline
$p=3$ & $0$ & $1$ & $2$ & $3$ & $4$ & $5$ & $6$ & $7$ & $8$ & $9$ & $10$ & $11$ & $12$ & $13$ & $14$ & $15$\\ \hline
0 & $1$ & $0$ & $0$ & $1$ & $1$ & $1$ & $2$ & $1$ & $2$ & $3$ & $3$ & $3$ & $5$ & $4$ & $5$ & $8$\\
2 & $0$ & $0$ & $0$ & $0$ & $0$ & $1$ & $1$ & $2$ & $2$ & $4$ & $4$ & $6$ & $8$ & $9$ & $11$ & $14$\\
4 & $0$ & $0$ & $0$ & $1$ & $0$ & $2$ & $3$ & $3$ & $5$ & $8$ & $8$ & $12$ & $15$ & $17$ & $22$ & $27$\\
6 & $0$ & $0$ & $1$ & $2$ & $2$ & $3$ & $7$ & $7$ & $10$ & $14$ & $16$ & $21$ & $27$ & $30$ & $37$ & $45$\\
8 & $0$ & $0$ & $1$ & $3$ & $4$ & $6$ & $8$ & $12$ & $16$ & $20$ & $25$ & $31$ & $38$ & $46$ & $54$ & $64$\\
10 & $0$ & $0$ & $1$ & $4$ & $5$ & $10$ & $13$ & $16$ & $23$ & $30$ & $35$ & $45$ & $54$ & $63$ & $76$ & $90$\\
12 & $0$ & $1$ & $4$ & $7$ & $8$ & $15$ & $20$ & $25$ & $32$ & $43$ & $49$ & $62$ & $75$ & $86$ & $102$ & $121$\\
14 & $0$ & $1$ & $5$ & $9$ & $13$ & $19$ & $27$ & $34$ & $44$ & $55$ & $67$ & $81$ & $97$ & $113$ & $133$ & $154$\\
16 & $0$ & $2$ & $6$ & $13$ & $17$ & $25$ & $36$ & $44$ & $57$ & $72$ & $84$ & $104$ & $124$ & $142$ & $167$ & $194$\\
18 & $1$ & $3$ & $10$ & $18$ & $24$ & $35$ & $47$ & $58$ & $75$ & $93$ & $109$ & $131$ & $157$ & $180$ & $209$ & $242$\\
20 & $0$ & $6$ & $12$ & $22$ & $31$ & $45$ & $58$ & $74$ & $92$ & $114$ & $136$ & $162$ & $189$ & $221$ & $254$ & $292$\\
\hline $p=5$ & $0$ & $1$ & $2$ & $3$ & $4$ & $5$ & $6$ & $7$ & $8$ & $9$ & $10$ & $11$ & $12$ & $13$ & $14$ & $15$\\ \hline
0 & $1$ & $0$ & $1$ & $1$ & $2$ & $2$ & $3$ & $3$ & $5$ & $5$ & $7$ & $8$ & $10$ & $11$ & $14$ & $16$\\
2 & $0$ & $0$ & $0$ & $0$ & $1$ & $2$ & $3$ & $5$ & $7$ & $10$ & $13$ & $17$ & $22$ & $27$ & $33$ & $40$\\
4 & $0$ & $1$ & $1$ & $3$ & $4$ & $7$ & $10$ & $14$ & $18$ & $25$ & $31$ & $39$ & $48$ & $59$ & $70$ & $84$\\
6 & $0$ & $0$ & $3$ & $4$ & $7$ & $11$ & $17$ & $22$ & $31$ & $39$ & $50$ & $63$ & $77$ & $92$ & $112$ & $131$\\
8 & $0$ & $3$ & $5$ & $9$ & $15$ & $21$ & $28$ & $40$ & $51$ & $64$ & $81$ & $99$ & $119$ & $144$ & $169$ & $198$\\
10 & $0$ & $2$ & $6$ & $12$ & $20$ & $29$ & $41$ & $54$ & $71$ & $90$ & $112$ & $136$ & $165$ & $196$ & $231$ & $270$\\
12 & $1$ & $6$ & $14$ & $22$ & $31$ & $48$ & $62$ & $81$ & $104$ & $130$ & $157$ & $193$ & $228$ & $269$ & $316$ & $366$\\
14 & $0$ & $7$ & $17$ & $27$ & $44$ & $60$ & $82$ & $107$ & $136$ & $167$ & $207$ & $247$ & $294$ & $346$ & $404$ & $465$\\
16 & $3$ & $13$ & $24$ & $43$ & $61$ & $84$ & $113$ & $145$ & $180$ & $224$ & $269$ & $322$ & $381$ & $445$ & $514$ & $594$\\
18 & $3$ & $14$ & $34$ & $53$ & $78$ & $109$ & $143$ & $181$ & $230$ & $279$ & $336$ & $401$ & $472$ & $548$ & $636$ & $727$\\
20 & $4$ & $26$ & $45$ & $72$ & $105$ & $143$ & $183$ & $236$ & $289$ & $352$ & $423$ & $500$ & $582$ & $680$ & $779$ & $ 890$\\ \hline
$p=7$ & $0$ & $1$ & $2$ & $3$ & $4$ & $5$ & $6$ & $7$ & $8$ & $9$ & $10$ & $11$ & $12$ & $13$ & $14$ & $15$\\ \hline
0 & $1$ & $1$ & $1$ & $2$ & $3$ & $4$ & $5$ & $6$ & $8$ & $10$ & $13$ & $15$ & $18$ & $22$ & $26$ & $31$\\
2 & $0$ & $0$ & $1$ & $1$ & $3$ & $5$ & $8$ & $12$ & $16$ & $22$ & $29$ & $37$ & $47$ & $57$ & $70$ & $84$\\
4 & $0$ & $1$ & $1$ & $5$ & $7$ & $12$ & $18$ & $26$ & $34$ & $47$ & $59$ & $75$ & $93$ & $114$ & $136$ & $164$\\
6 & $1$ & $3$ & $7$ & $11$ & $18$ & $26$ & $38$ & $50$ & $67$ & $85$ & $107$ & $133$ & $162$ & $194$ & $232$ & $272$\\
8 & $0$ & $6$ & $10$ & $19$ & $29$ & $43$ & $57$ & $80$ & $102$ & $130$ & $162$ & $199$ & $239$ & $289$ & $339$ & $398$\\
10 & $1$ & $5$ & $14$ & $26$ & $42$ & $60$ & $85$ & $111$ & $145$ & $183$ & $228$ & $276$ & $334$ & $396$ & $467$ & $545$\\
12 & $4$ & $15$ & $29$ & $47$ & $67$ & $98$ & $128$ & $168$ & $212$ & $265$ & $321$ & $391$ & $463$ & $546$ & $638$ & $740$\\
14 & $4$ & $18$ & $38$ & $60$ & $93$ & $127$ & $171$ & $221$ & $280$ & $344$ & $422$ & $504$ & $599$ & $703$ & $819$ & $943$\\
16 & $5$ & $27$ & $49$ & $86$ & $122$ & $170$ & $226$ & $291$ & $361$ & $449$ & $539$ & $646$ & $762$ & $892$ & $1030$ & $1189$\\
18 & $13$ & $37$ & $76$ & $116$ & $168$ & $228$ & $299$ & $377$ & $473$ & $573$ & $690$ & $818$ & $962$ & $1116$ & $1291$ & $1475$\\
20 & $13$ & $54$ & $94$ & $150$ & $214$ & $291$ & $373$ & $477$ & $585$ & $712$ & $852$ & $1008$ & $1174$ & $1367$ & $1567$ & $1791$\\ \hline
$p=11$ & $0$ & $1$ & $2$ & $3$ & $4$ & $5$ & $6$ & $7$ & $8$ & $9$ & $10$ & $11$ & $12$ & $13$ & $14$ & $15$\\ \hline
0 & $1$ & $1$ & $2$ & $3$ & $4$ & $6$ & $8$ & $11$ & $15$ & $19$ & $24$ & $31$ & $38$ & $46$ & $56$ & $67$\\
2 & $0$ & $1$ & $2$ & $4$ & $9$ & $14$ & $21$ & $31$ & $43$ & $57$ & $75$ & $95$ & $119$ & $147$ & $178$ & $213$\\
4 & $1$ & $4$ & $6$ & $15$ & $22$ & $35$ & $51$ & $71$ & $93$ & $125$ & $157$ & $197$ & $243$ & $296$ & $353$ & $422$\\
6 & $3$ & $5$ & $18$ & $27$ & $44$ & $66$ & $94$ & $124$ & $168$ & $212$ & $268$ & $332$ & $405$ & $484$ & $581$ & $681$\\
8 & $2$ & $17$ & $28$ & $49$ & $77$ & $111$ & $149$ & $205$ & $261$ & $331$ & $413$ & $506$ & $607$ & $730$ & $858$ & $1005$\\
10 & $7$ & $20$ & $43$ & $75$ & $115$ & $161$ & $225$ & $293$ & $377$ & $475$ & $586$ & $709$ & $856$ & $1012$ & $1189$ & $1386$\\
12 & $11$ & $38$ & $74$ & $120$ & $170$ & $248$ & $342$ & $422$ & $536$ & $667$ & $808$ & $983$ & $1163$ & $1372$ & $1603$ & $1857$\\
14 & $15$ & $53$ & $103$ & $159$ & $243$ & $329$ & $439$ & $567$ & $714$ & $875$ & $1072$ & $1278$ & $1515$ & $1778$ & $2068$ & $2379$\\
16 & $26$ & $78$ & $138$ & $230$ & $324$ & $444$ & $586$ & $749$ & $928$ & $1147$ & $1377$ & $1642$ & $1937$ & $2261$ & $2610$ & $3008$\\
18 & $38$ & $100$ & $198$ & $298$ & $428$ & $582$ & $759$ & $954$ & $1195$ & $1447$ & $1738$ & $2063$ & $2421$ & $2806$ & $3246$ & $3707$\\
20 & $44$ & $148$ & $252$ & $390$ & $554$ & $745$ & $954$ & $1215$ & $1487$ & $1804$ & $2157$ & $2547$ & $2966$ & $3448$ & $3951$ & $4509$\\ \hline
\end{tabular}
\end{table}

\normalsize
\newpage
\subsection{Congruences}

The following table gives information on the congruences found. For simplicity we only give the Hecke eigenvalues at $q=3$ when $p=2$ and $q=2$ when $p=3,5,7,11$.

Note that there is no congruence at level $11$ (even though one is expected). This does not contradict the conjecture since $\lambda\mid 11$ in this case.

Whenever $a_q$ is rational we give the Hecke eigenvalue explicitly. When it lies in a bigger number field we give the minimal polynomial $f(x)$ defining $\mathbb{Q}_f$ (then the Hecke eigenvalue $a_2$ in all of our cases is exactly a root $\alpha$ of this polynomial).

The large primes given are the rational primes lying below the prime for which the congruence holds. 

\small
\begin{table}[H]
\begin{tabular}{|c|c|c|c|c|c|}
\hline & $(j,k)$ & $N(\lambda)$ & tr$(T_q)$ &  $b_q$ & $a_q$\\ \hline
$p=2$&$(0,14)$ & $37$ & $2223720$ & $2223720$ & $97956$\\
&$(2,10)$ & $61$ & $18360$ & $18360$ & $-13092$\\
&$(2,11)$ & $71$& $-57528$ & $-57528$ & $59316$\\
&$(2,12)$ & $29$ & $-122040$ & $-122040$ & $-505908$\\
&$(4,10)$ & $61$ & $-189720$ & $-189720$ & $71604$\\
&$(6,7)$ & $29$ & $1872$ & $3240$ & $6084$\\
&$(10,6)$ & $109$ & $216$ & $216$ & $-13092$\\
&$(12,5)$ & $79$ & $77544$ & $-7560$ & $-53028$\\
&$(12,6)$ & $23$ & $-275688$ & $30600$ & $71604$\\
&$(14,5)$ & $379$ & $102960$ & $63000$ & $59316$\\
&$(16,4)$ & $37$ & $-97488$ & $-23400$ & $71604$\\ \hline
$p=3$ &$(2,8)$ & $109$ & $-312$ & $-312$ & $-234$\\
&$(4,6)$ & $23$ & $-36$ & $-36$ & $-12$\\
&$(6,5)$ & $47$ & $72$ & $72$ & $x^2 + 54x - 16992$\\
&$(8,5)$ & $67$ & $300$ & $12$ & $-72$\\
&$(10,5)$ & $433$ & $120$ & $24$ & $x^2 - 594x - 42912$\\
&$(12,4)$ & $23$ & $-1716$ & $132$ & $204$\\
&$(14,4)$ & $617$ & $-240$ & $72$ & $x^2 - 702x - 664128$\\ \hline
$p=5$ &$(2,7)$ & $61$ & $-76$ & $-76$ & $x^3 - 142x^2 - 11144x + 901248$\\ \hline
$p=7$ &$(2,5)$ & $263$ & $-44$ & $-44$ & $x^3 - 21x^2 - 1326x + 19080$\\ 
&$(4,4)$ & $101$ & $-2$ & $-2$ & $x^2 + 6x - 184$\\
&$(4,5)$ & $43$ &$-70$ & $10$ & $x^2 + 54x - 2640$\\ \hline
$p=11$ &$(2,4)$ & $11$ & $-20$ & $-20$ & N/A\\ \hline
\end{tabular}
\end{table}

\normalsize

\end{document}